\newcounter{contSect} \numberwithin{contSect}{section}
 \numberwithin{contSub}{subsection}
\newtheorem{theorem}[contSect]{Theorem}
\newtheorem{corollary}[contSect]{Corollary}
\newtheorem{lemma}[contSect]{Lemma}
\newtheorem{proposition}[contSect]{Proposition}
\newtheorem{conjecture}[contSect]{Conjecture}
\newtheorem{claim}[contSect]{Claim}
\newtheorem{problem}[contSect]{Problem}
\title{Minor-excluded graphs and soficity}
\author{Oriol Solé-Pi}
\thanks{Department of Mathematics, Massachusetts Institute of Technology. \textit{Email}: oriolsp@mit.edu}
\date{}                     
\begin{document}

\begin{abstract}
    A random rooted graph is said to be \textit{sofic} if it is the Benjamini-Schramm limit of a sequence of finite graphs. Given any finite graph $H$, we prove that every one-ended, unimodular random rooted graph that does not have $H$ as a minor must be sofic. The hypothesis regarding the number of ends can be dropped under the additional assumption that the graph is quasi-transitive. 
\end{abstract}

\small\maketitle

\section{Introduction}\label{sec:intro}

Since the pioneering works of Benjamini and Schramm\cite{benjamini2011recurrence} and Aldous and Steele~\cite{aldous2004objective}, the study of locally (i.e., Benjamini-Schramm) convergent sequences of finite sparse graphs, and their corresponding limit objects, has become an important area of study in probability, combinatorics, and theoretical computer science. In this setting, a sequence $\{G_n\}_{n\geq 1}$ of finite graphs is said to converge if, for every positive integer $r$ and every finite rooted graph $F_r$ of radius $r$, the probability that the $r$-neighborhood of a uniformly chosen vertex of $G_n$ is isomorphic to $F_r$ converges as $n\rightarrow\infty$. To each convergent sequence of finite graphs we can associate a limit object in the form of a random (possibly infinite) rooted graph.

The theory of local convergence provides a powerful framework to study several properties and parameters of sparse graphs, including the asymptotic number of spanning trees~\cite{lyons2005asymptotic} and matchings~\cite{bordenave2013matchings}, the critical percolation probability~\cite{benjamini2011critical}, PageRank~\cite{Garavaglia2018LocalWC}, and even the spread of epidemics~\cite{alimohammadi2022algorithms}. Moreover, there has been an important amount of work devoted towards trying to understand the limiting behavior of graphs drawn from multiple random graph models~\cite{bollobas2007phase,van2023local, alimohammadi2025local}. Local convergence has also been investigated for other structures, such as manifolds~\cite{abert2022unimodular} and permutations~\cite{borga2020local}.

Other prominent notions of convergence for sequences of large dense graphs and other discrete structures such as posets~\cite{janson2011poset} and permutations~\cite{hoppen2013limits} have also been studied. In~\cite{denselimits}, some of these notions have even been unified as part of a broader framework that makes it possible to treat convergence of dense discrete structures in great generality. One of the main successes in this broad venue of research has been the development of a comprehensive and powerful theory of convergence for sequences of dense graphs~\cite{lovasz2006limits,borgs2008convergent,borgs2012convergent2}. 

Just as the study of locally convergent sequences of graphs, this theory is deeply intertwined with the tasks of property testing and parameter estimation~\cite{borgs2008convergent,lovasz2010testing}. We refer the reader to the book of Lov{\'a}sz~\cite{lovaszlimits} for a careful treatment of the theory of graph limits in both the dense and the sparse settings, as well as extensive discussions on the parallels between these two.

One of the most striking differences between the theories of local convergence of sparse graphs and convergence of dense graphs is that our understanding of the local limit objects is rather poor. As mentioned earlier, every locally convergent sequence of finite graphs has as its limit a random rooted graph. However, not all random rooted graphs can arise as local limits of finite graphs: Let $G$ be a connected graph with no non-trivial automorphisms, and $\rho$ be an arbitrary vertex of $G$. Then, the random rooted graph which takes value $(G,\rho)$ with probability one cannot be obtained in this way. Those random rooted graphs that can be obtained via local limits are called \textit{sofic}. It is not hard to see that every sofic random rooted graph must enjoy a certain structural property known as \textit{unimodularity}---which, in particular, rules out the example we just mentioned. At an intuitive level, one can think of unimodularity as saying that each vertex of the graph has the same probability of appearing as the root (although this does not really make sense as stated); the precise definition will be given in Section~\ref{subsec:unimodular-sofic}. Aldous and Lyons~\cite{aldous2007processes} asked whether the classes of unimodular and sofic random rooted graphs coincide.

\begin{problem}[Aldous-Lyons]\label{Aldou-Lyons}
    Is every unimodular random rooted graph sofic?
\end{problem}

A negative answer to this question has been provided in a series of two papers by Bowen, Chapman, Lubotzky, Vidick~\cite{bowen2024aldous} and Bowen, Chapman, Vidick~\cite{bowen2024aldous2}. At the core of these contributions lies a modification of the $\MIP^*=\RE$ reduction from complexity theory first proven by Ji et al.~\cite{ji2021mip}. Let us remark that the solution to the Aldous-Lyons problem in~\cite{bowen2024aldous,bowen2024aldous2} is non-constructive, and finding a unimodular non-sofic random rooted graph remains a major open problem.

Despite these breakthroughs, our understanding of the family of sofic random rooted graphs remains far from complete. The purpose of this paper is to make some further progress in this direction by giving a positive answer to Problem~\ref{Aldou-Lyons} under additional structural assumptions on the random rooted graph. More precisely, we study the soficity of unimodular random rooted graphs which do not have some arbitrary graph as a minor.
The class of graphs which exclude some finite graph as a minor includes all trees and, more generally, all graphs that can be properly embedded on a surface of finite genus. Further background regarding the theory of graph minors is given in Section~\ref{subsec:structural}. Below are our two main results.

\begin{theorem}\label{thm:main}
    Fix any finite graph $H$. Let $(G,\rho)$ be a unimodular random rooted graph such that $G$ almost surely is one-ended and does not have $H$ as a minor. Then, $(G,\rho)$ is sofic.
\end{theorem}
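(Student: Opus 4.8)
The plan is to reduce the statement to three ingredients: a version of the Robertson--Seymour structure theorem living in the unimodular category, soficity of the resulting ``building blocks'', and a gluing principle saying that soficity passes through tree-decompositions of bounded adhesion. Fix $k=k(H)$ so that every finite $H$-minor-free graph has a tree-decomposition of adhesion at most $k$ whose torsos are $k$-almost-embeddable in a surface of Euler genus at most $k$ (after deleting at most $k$ apex vertices, the remainder embeds in such a surface up to at most $k$ vortices of depth at most $k$). The first step is to produce, for our unimodular $(G,\rho)$, an analogous tree-decomposition $(T,\mathcal V)$ that is automorphism-\emph{equivariant}; being then a factor of $(G,\rho)$, it keeps the torso at the root --- recorded together with all of the decomposition data (adhesion sets, apices, vortices, the rotation system of the embedded part) --- within the class of unimodular random rooted structures. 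I would obtain this by feeding the finite structure theorem into the canonical-tree-decomposition machinery for infinite graphs (in the spirit of Carmesin, Diestel, Hamann and Hundertmark): a \emph{canonical} decomposition is invariant under isomorphisms, and therefore transfers unimodularity to its pieces.

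One-endedness is what makes the decomposition usable. Every adhesion set $V_s\cap V_t$ has at most $k$ vertices and separates the two sides of the edge $st$ in $T$; since $G$ is one-ended, at most one side is infinite, and in fact each node has at most one incident edge whose ``far side'' is infinite --- two such edges would exhibit two infinite components after deleting a set of size at most $2k$. Orienting every edge of $T$ towards its infinite side then produces a \emph{core} --- a single node, or a ray --- along which the unique end of $G$ runs, with a forest of pieces of finite bag-union grafted onto it. By unimodularity this grafted part is hyperfinite and can be absorbed through the gluing principle, which reduces the problem to the core, all of whose torsos are themselves one-ended.

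The second ingredient is the base case: a one-ended unimodular random rooted graph that is $k$-almost-embeddable in a surface of Euler genus at most $k$ is sofic. The at most $k$ apices are a bounded perturbation that can be deleted and reinstated, and the at most $k$ bounded-depth vortices are re-attached by a local, purely combinatorial application of the bounded-adhesion gluing principle; what remains is a one-ended unimodular random map on a surface of bounded genus. For such a map the embedding is essentially rigid (the relevant torsos are $3$-connected up to bounded-size modifications), so the rotation system is again a factor of the graph and the map is itself unimodular, and I would then exhibit it as a local limit of finite maps by an exhaustion-and-discretization argument resting on circle packing (\`a la He and Schramm) in the planar case and on its bounded-genus analogues --- capping off large metric balls along their boundaries to produce finite maps whose local statistics converge.

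The third ingredient is the gluing principle itself: if $(G,\rho)$ carries a unimodular tree-decomposition of adhesion at most $k$ whose decorated torsos are sofic, then $(G,\rho)$ is sofic. The decomposition tree $T$ is a unimodular random rooted tree, hence sofic; one independently approximates each torso type by finite sofic witnesses and then re-glues the finite pieces along a finite approximation of $T$ in a way that respects the (bounded) adhesion sets, arranging that the torso approximations agree on shared adhesion sets so that the global local statistics converge to those of $(G,\rho)$. I expect this gluing step --- making it go through uniformly despite vertices of unbounded degree and the vortex/apex decorations, and certifying that the assembled finite graphs genuinely converge to $(G,\rho)$ --- to be the main obstacle, with the upgrade of the structure theorem to an equivariant, unimodularity-preserving statement a close second.
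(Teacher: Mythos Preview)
Your plan rests on two ingredients that are genuinely open, and the one you rank as ``a close second'' is in fact the fatal gap. There is no known canonical (automorphism-invariant) version of the Robertson--Seymour structure theorem for general locally finite graphs. The Carmesin--Diestel--Hamann--Hundertmark machinery yields canonical tree-decompositions into highly connected pieces, but it does \emph{not} deliver almost-embeddable torsos; upgrading it to output apices, vortices, and bounded-genus embeddings in an isomorphism-invariant way would require substantially new ideas and is, to my knowledge, not available. This is not incidental: the paper proves a companion result (Theorem~\ref{thm:main-transitive}) for \emph{quasi-transitive} graphs via exactly the structure-theorem-plus-gluing route you sketch, using the recent canonical decomposition of Esperet--Giocanti--Legrand-Duchesne and the levels-equivalence machinery of Jard\'on-S\'anchez; the quasi-transitivity is precisely what makes the canonical decomposition exist, and absent it the paper is forced into an entirely different argument for Theorem~\ref{thm:main}. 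Your base case is also not established: even the planar one-ended case (Angel--Hutchcroft--Nachmias--Ray, Tim\'ar) currently needs a bound on the average degree, and ``capping off metric balls'' does not by itself explain why the finite maps reproduce the correct local statistics. The gluing principle you flag as the main obstacle is likewise not in the literature in the form you need (soficity, as opposed to treeability, passing through bounded-adhesion decompositions).

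The paper's actual proof avoids all of this. It approximates $(G,\rho)$ by site percolations of vanishing rate each of which has \emph{bounded treewidth}, and then invokes the recent theorem that bounded-treewidth unimodular random rooted graphs are treeable, hence sofic. To build such percolations, the paper first reduces to bounded degree and to infinite components of uniformly positive vertex-expansion, and then deletes sparse ``filaments'' (small trees spanning well-separated nets inside large Voronoi macro-cells). The core technical step shows, via the grid-minor theorem, that the resulting graph is $(t,r)$-locally-thin --- every radius-$r$ ball has treewidth at most $t$, with $r$ arbitrarily large relative to $t$ --- because any large wall in it would either be too big to fit in such a ball (by expansion and one-endedness of the original $G$), or would combine with short ``leaps'' through the filaments to produce an $H$-minor. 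Letting $r\to\infty$ gives globally bounded treewidth; letting the filament density go to zero recovers $(G,\rho)$. No tree-of-torsos decomposition, surface embedding, or gluing principle is used.
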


A connected graph is \textit{one-ended} if and only if it is infinite and cannot be split into two (or more) infinite components by removing a finite set of vertices. The number of ends of a graph is defined more carefully in Section~\ref{subsec:ends}. It is well-known that unimodular random rooted graphs have zero, one, two, or infinitely many ends almost surely~\cite{aldous2007processes}. Moreover, graphs with zero ends are finite, and unimodular random graphs with two ends and bounded average degree are known to be hyperfinite (see Theorem~\ref{thm:ends}, as well as Section~\ref{subsec:expansion-amenable}), and thus sofic. Hence, whenever $(G,\rho)$ has finite average degree, the one-endedness assumption can be substituted by the weaker condition that the number of ends is a.s.\footnote{Throughout the paper, we use \textit{a.s.}\ as a shorthand for "almost surely".} finite. Using some recent results of Esperet, Giocanti, Legrand-Duchesne~\cite{esperet2024structure} and Jard{\'o}n-S{\'a}nchez~\cite{jardon2023applications}, we are able to remove this assumption altogether under the additional hypothesis that the graph is (vertex) quasi-transitive.

\begin{theorem}\label{thm:main-transitive}
    Fix any countable graph $H$. Let $(G,\rho)$ be a unimodular random rooted graph such that $G$ a.s.\ is quasi-transitive and does not have $H$ as a minor. Then, $(G,\rho)$ is treeable, and hence sofic. 
\end{theorem}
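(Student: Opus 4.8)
The plan is to establish the stronger conclusion that the measured equivalence relation $\mathcal R$ naturally associated with $(G,\rho)$ is treeable; soficity then follows, since treeable equivalence relations are sofic (Elek and Lippner) and a unimodular random rooted graph is sofic once its equivalence relation is. After reducing to the case where $G$ is a.s.\ locally finite, the first step is to bring in the structure theory for quasi-transitive minor-excluded graphs. By the theorem of Esperet, Giocanti, Legrand-Duchesne~\cite{esperet2024structure}, combined with the reduction that allows an excluded \emph{countable} minor to be replaced by a situation governed by the finite-minor structure theory (which uses quasi-transitivity, and is where the results of Jard\'{o}n-S\'{a}nchez~\cite{jardon2023applications} also enter), $G$ a.s.\ admits a \emph{canonical} tree-decomposition $\bigl(\mathcal T,(B_t)_{t\in V(\mathcal T)}\bigr)$ of bounded adhesion in which every torso is finite or, after a standard further refinement along $2$-separations, a $3$-connected quasi-transitive planar graph. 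Canonicity means the decomposition is invariant under $\mathrm{Aut}(G)$ and is therefore a measurable function of the isomorphism class of $(G,\rho)$; hence $G$ equipped with this decomposition is again a unimodular random rooted object, and $\mathcal T$ may be taken locally finite.

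The second step is to show that every torso, with its induced unimodular structure, has treeable equivalence relation. Finite torsos are trivial. For an infinite $3$-connected locally finite quasi-transitive planar torso, the planar embedding is essentially unique, so vertex stabilizers in the automorphism group act faithfully on the cyclic order of the neighbours of a vertex and are in particular finite; thus the automorphism group acts properly and cocompactly on the plane or the hyperbolic plane, and by the classification of quasi-transitive planar graphs it is virtually free, virtually $\mathbb Z^2$, or a cocompact Fuchsian group. In the first case treeability is immediate; in the second $\mathcal R$ is amenable, hence hyperfinite, hence treeable; in the third the group is virtually the fundamental group of a closed surface, which splits as an amalgam (or HNN extension) of free groups over $\mathbb Z$ and is therefore treeable by Gaboriau's permanence theorem for amalgams over amenable subgroups, treeability passing to finite-index overgroups. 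Transferring this to the level of the unimodular random rooted graph---which is precisely the form in which I would invoke the results of Jard\'{o}n-S\'{a}nchez---one concludes that each torso is treeable.

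The third step is to glue the pieces along the tree-decomposition. The tree $\mathcal T$ presents $\mathcal R$ as a graph of equivalence relations, in the sense of the Bass--Serre theory for measured Borel equivalence relations and groupoids developed by Alvarez and Gaboriau, whose vertex pieces are the torso relations---treeable by the second step---and whose edge pieces are the adhesion relations, which are carried by finite sets and hence trivially treeable. The permanence principle that a graph of treeable equivalence relations amalgamated over treeable edge relations is again treeable then yields that $\mathcal R$, and so $(G,\rho)$, is treeable and sofic. I expect the main difficulty to be twofold: first, setting up the canonical tree-decomposition so that adhesion sets, structure tree, and torsos genuinely assemble into a unimodular decomposition, with enough invariance and measurability for the equivalence-relation Bass--Serre machinery to apply; and second, obtaining treeability of the planar torsos in exactly the unimodular/measured sense required---the step for which the recent work of Jard\'{o}n-S\'{a}nchez appears essential---along with the preliminary reduction from an arbitrary countable excluded minor.
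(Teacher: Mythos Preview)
Your overall architecture---canonical tree decomposition from Esperet--Giocanti--Legrand-Duchesne, treeability of the torsos, then a gluing step---matches the paper's. The differences are in the execution of each of the three steps, and one of them contains a real gap.

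First, the torso dichotomy in the structure theorem is not ``finite or $3$-connected planar'' but rather ``planar or of treewidth at most $m$'' (with the torsos quasi-transitive and connected). Infinite quasi-transitive graphs of bounded treewidth certainly exist, so your ``finite'' case does not cover them. The paper handles this case via the theorem of Jard\'on-S\'anchez and Chen--Poulin--Tao--Tserunyan that bounded-treewidth Borel graphs are measure treeable; you would need to add this.

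Second---and this is the substantive gap---your analysis of the planar torsos via a classification of $\mathrm{Aut}(\text{torso})$ does not establish what is needed. Even granting that the automorphism group of a $3$-connected quasi-transitive planar graph is virtually free, virtually $\mathbb Z^2$, or virtually surface, treeability of that \emph{group} does not transfer to treeability of the equivalence relation carried by the torso inside the graphing: the action is not free, the torso is not a Cayley graph, and there is no obvious ``induced unimodular structure'' on an individual torso. You acknowledge this (``transferring this \dots\ is precisely the form in which I would invoke Jard\'on-S\'anchez''), but at that point the group classification is doing no work. The paper avoids this entirely: it uses Esperet--Giocanti--Legrand-Duchesne's accessibility theorem to see that quasi-transitive $K_\infty$-minor-free graphs (hence the planar torsos) are accessible, and then applies directly Jard\'on-S\'anchez's theorem that a locally finite Borel graph with planar, accessible components has measure-treeable connectedness relation.

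Third, for the gluing the paper does not invoke Alvarez--Gaboriau Bass--Serre theory but rather Jard\'on-S\'anchez's theorem that if a Borel collection of tree decompositions has bounded adhesion and the \emph{levels equivalence relation} (on pairs $(x,t)$ with $x\in V_t$) is measure treeable, then so is the original relation. This packages exactly the situation at hand, with the levels relation being a disjoint union of a bounded-treewidth Borel graph and a planar-accessible one. Your Alvarez--Gaboriau route may well be made to work, but you would have to verify that the canonical tree decomposition really yields a graph-of-equivalence-relations structure in their sense, with the adhesion pieces sitting correctly as edge relations.

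Finally, the paper is more careful than your sketch about why the canonical decomposition is available \emph{measurably}: it reduces to the ergodic case to obtain uniform bounds on degree and on the radius needed to recognise orbits, and then argues that a single finite local rule computes the separations at every vertex, so the decomposition is a genuine marking. Your one-line appeal to canonicity elides this.
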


A unimodular random rooted graph is \textit{treeable} if it admits a unimodular random rooted spanning tree; all treeable unimodular random rooted graphs are known to be sofic (see Section~\ref{subsec:previous-work}). Perhaps surprisingly, our proof of this second result is considerably shorter than the proof of Theorem~\ref{thm:main}. This is because the quasi-transitivity allows us to use as a black box some results from~\cite{esperet2024structure} regarding the existence of certain highly-structured tree decompositions of the graph (see Section~\ref{sec:transitive} for details). As such, the majority of this paper is devoted to the proof of the first of the two theorems above, which we consider to be our main contribution.  Our proofs can be adapted to imply that any unimodular random rooted graph satisfying the hypotheses of either of the two theorems is strongly sofic, as defined in~\cite{angel2018hyperbolic} (see Section~\ref{subsec:markings}). 

\medskip\noindent\textit{Remark.} It is also important to mention that a sequence of finite graphs which converges in the Benjamini-Schramm sense to a unimodular random rooted graph with no $H$-minor may consist of graphs which are themselves very far from being $H$-minor-free. For example, for any $d\geq 3$, a sequence of uniform random $d$-regular graphs of increasing orders will converge a.s.\ to the rooted infinite $d$-regular tree. The infinite tree is clearly $H$-minor-free for every graph $H$ which contains at least one cycle; yet, for any such $H$, a.s.\ all sufficiently large graphs in the sequence will have $H$ as a minor---this is implied by the results in~\cite{kleinberg1996short}, but can also be proven through simpler methods. In fact, unimodular random rooted graphs which arise as limits of finite $H$-minor-free graphs are not particularly interesting in our setting: they are precisely those unimodular random rooted graphs which are $H$-minor-free and hyperfinite (this is a simple consequence of the fact that finite graphs with an excluded minor admit small separators~\cite{alon1990separator} in conjunction with the results in~\cite{schramm2011hyperfinite}).

\subsection{Previous work}\label{subsec:previous-work} 
As we have mentioned, the solution to the Aldous-Lyons problem in~\cite{bowen2024aldous,bowen2024aldous2} expands upon previous work of Ji et al.~\cite{ji2021mip}. Already in~\cite{ji2021mip}, a similar strategy was used in order to refute the Connes embedding conjecture, thus solving a major open problem in the theory of von Neumann algebras. We refer the reader to~\cite{goldbring2022connes} for a friendly introduction to the complexity classes $\MIP^*$ and $\RE$, as well as to the Connes embedding problem and its solution.

In the positive direction, there have been multiple works studying properties of unimodular random rooted graphs which imply soficity. One of the foundational results in this line of research tells us that if $(G,\rho)$ is a unimodular random rooted graphs such that $G$ is a.s.\ a tree, then it is sofic; this is due to Bowen~\cite{bowen2003periodicity}, Elek~\cite{elek2010limit}, and Benjamini, Lyons, Schramm~\cite{benjamini2015unimodular}. 
This result was later extended from trees to treeable unimodular random rooted graphs by Elek and Lipner~\cite{elek2010sofic}. This constitutes one of the most powerful tools available for proving soficity. Yet another very general property which is known to imply soficity is hyperfiniteness (see Section~\ref{subsec:expansion-amenable} for the definition). Hyperfinite unimodular random rooted graphs include Cayley graphs of amenable groups, as well as unimodular graphs of polynomial growth. Note that trees are precisely those graphs that do not have the cycle of length $3$ as a minor. Thus, Theorem~\ref{thm:main-transitive} is a generalization of the fact that unimodular, quasi-transitive, random rooted trees are sofic.

In a beautiful paper, Angel, Hutchcroft, Nachmias and Ray~\cite{angel2018hyperbolic} studied the geometry of unimodular random rooted maps\footnote{A \textit{unimodular random rooted map} consists of a unimodular random rooted graph where each vertex carries some additional information in the form of a cyclic permutation of its neighbors. The permutation encodes the order in which the edges appear around this vertex in some specific map, and the collection of all these cyclic permutations uniquely determines the combinatorial structure of the map. See~\cite{angel2018hyperbolic} for details.} and, among many other results, they showed that every unimodular, one-ended planar map of bounded average degree is treeable, and thus sofic. This was subsequently extended to all unimodular one-ended planar graphs of bounded average degree by Timár~\cite{timar2023unimodular},\footnote{In~\cite{timar2023unimodular}, this result is stated without any assumptions on the average degree. However, the proof of this stronger version is flawed: the paper claims that it is sufficient to prove the result for unimodular random graphs of bounded maximal degree, since every unimodular random graph arises as a limit of a sequence of such graphs; unfortunately, the one-endedness might not be preserved after passing to this sequence.} and later to even more general classes of planar graphs by Jardón-Sánchez\cite{jardon2023applications}. All these results rely on powerful tools that are available only when working with planar graphs, such as duality. Since planar graphs do not have $K_5$ nor $K_{3,3}$ as a minor, Theorem~\ref{thm:main} constitutes a generalization of the fact that unimodular one-ended planar graphs of bounded average degree are sofic, both in that it allows for any arbitrary forbidden minor, and in that it removes the assumption about the average degree. 

Recently, it has also been shown by Jardón-Sánchez~\cite{jardon2023applications} and Chen, Poulin, Tao, and Tserunyan~\cite{chen2023tree} that every locally-finite Borel graph all of whose connected components have bounded treewidth must be treeable (the definition of treewidth will be given in Section~\ref{subsec:structural}); in fact, it is shown in~\cite{chen2023tree} that the conclusion also holds whenever the components are quasi-isometric to trees. In our setting,\footnote{We refer the reader to Section~\ref{sec:transitive} or~\cite[Chapter 18]{lovaszlimits} for a discussion on the connection between the settings of Borel graphs/graphings and random rooted graphs.} the first of these facts implies that every unimodular random rooted graph whose treewidth is a.s.\ bounded must be treeable. By a well-known result from structural graph theory\cite{robertson1986graph}, we conclude that for every finite planar graph $H$, a unimodular random rooted graph which a.s.\ does not have $H$ as a minor must be treeable. A "streamlined" version of the proof in~\cite{chen2023tree} has been given in~\cite{zhai2024quassi-treeings}.

Recently~\cite{lyons2025note}, the results from~\cite{bowen2024aldous,bowen2024aldous2} have been used to address a certain finite analogue of Problem~\ref{Aldou-Lyons} due to Lovász~\cite[Chapter 19]{lovaszlimits}.

\subsection{Sofic groups}\label{subsec:groups} Soficity was originally introduced in the realm of group theory by Gromov~\cite{gromov1999endomorphisms} and Weiss~\cite{weiss2000sofic} as a common generalization of amenability and residual finiteness. Although we will work only in the setting of unimodular random rooted graphs, we also include the group-theoretic definition of soficity below, in order to remark on the implications of our main results in this setting.

Let $S(n)$ denote the permutation group on $n$ elements and let $e_{S(n)}$ be its identity element. For any $\sigma_1,\sigma_2\in S(n)$, define $d_{S(n)}(\sigma_1,\sigma_2):=|\{i\in[n]:\sigma_1(i)\neq\sigma_2(i)\}|/n$. A discrete group $\Gamma$ is said to be \textit{sofic} if, for any finite set $F\subseteq\Gamma$ containing the identity element $e_\Gamma$ and any $\varepsilon>0$, there exist an $n\in \mathbb N$ and a map $\phi:\Gamma\rightarrow S(n)$ with the following properties: 
\begin{itemize}
    \item $\phi$ maps the identity in $\Gamma$ to the identity in $S(n)$;

    \item $d_{S(n)}(e_{S(n)},\phi(g))=1$ for all $g\in F\backslash\{e_\Gamma\}$;

    \item $d_{S(n)}(\phi(gh),\phi(g)\phi(h))\leq \varepsilon$ for all $g,f\in \Gamma$ such that $gh\in F$. 
\end{itemize} Intuitively, these properties can be interpreted as saying that $\phi$ behaves as an "approximate non-trivial homomorphism" when restricted to $F$. Clearly, a group is sofic if and only if all of its finitely generated subgroups are sofic. 

Let us briefly touch as well on the connection between the two notions of soficity we have seen thus far. We begin by introducing Cayley graphs of groups. Given a finitely generated group $\Gamma$ and a set of generators $S$, the \textit{(right) Cayley graph} $\operatorname{Cay}(\Gamma,S)$ associated to the pair $(\Gamma,S)$ is the edge-labeled graph whose vertex set is $\Gamma$ and where two vertices $x$ and $y$ are connected by an edge with label $x^{-1}y$ if and only if $x^{-1}y\in S$. By rooting $\operatorname{Cay}(\Gamma,S)$ at $e_\Gamma$, every locally finite Cayley graph gives rise to a unimodular random rooted graph (i.e., the random rooted graph which takes value $(\operatorname{Cay}(G,S),e_\Gamma)$ with probability $1$). It is well-known that a finitely generated group is sofic if and only if it admits a Cayley graph which is sofic (in the sense that the unimodular random rooted graph it induces is sofic), and that this occurs if and only if all of its Cayley graphs are sofic. In the group theory literature, sofic Cayley graphs are often referred to as being \textit{initially subamenable}. Our result that quasi-transitive minor-excluded unimodular random rooted graphs are strongly sofic has the following consequence.

\begin{corollary}\label{cor:groups}
    Every finitely generated group admitting a minor-excluded Cayley graph is sofic.
\end{corollary}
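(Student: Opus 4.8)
The plan is to reduce Corollary~\ref{cor:groups} directly to Theorem~\ref{thm:main-transitive} by passing from the group $\Gamma$ to the unimodular random rooted graph induced by one of its Cayley graphs. Suppose $\Gamma$ is finitely generated and admits a Cayley graph $G = \operatorname{Cay}(\Gamma,S)$ (with $S$ a finite symmetric generating set) that excludes some minor $H$; here $H$ may be taken finite since $G$ is locally finite, so any minor of $G$ is finite. As recalled in Section~\ref{subsec:groups}, rooting $G$ at $e_\Gamma$ yields a unimodular random rooted graph $(G,e_\Gamma)$ (the point mass at $(G,e_\Gamma)$), and this random rooted graph is sofic if and only if $\Gamma$ is sofic (equivalently, $G$ is initially subamenable). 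The graph $G$ is vertex-transitive — the left-multiplication action of $\Gamma$ on itself acts transitively by graph automorphisms — hence in particular quasi-transitive, and by hypothesis it almost surely has no $H$-minor. Therefore $(G,e_\Gamma)$ satisfies exactly the hypotheses of Theorem~\ref{thm:main-transitive}, which gives that $(G,e_\Gamma)$ is treeable, and hence sofic. Consequently $\Gamma$ is sofic.

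The only point requiring a word of care is the passage between soficity of the group and soficity of the induced random rooted graph, together with the fact that it suffices to exhibit a single sofic Cayley graph. Both are standard and are explicitly invoked in Section~\ref{subsec:groups}: a finitely generated group is sofic precisely when it admits some sofic Cayley graph, and in that case all of its Cayley graphs are sofic. I would simply cite this equivalence rather than reprove it. (If one wants the strengthening to \emph{strong} soficity, the same argument applies verbatim using the remark in Section~\ref{subsec:markings} that the random rooted graphs produced by Theorems~\ref{thm:main} and~\ref{thm:main-transitive} are in fact strongly sofic; but for the corollary as stated, ordinary soficity suffices.)

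There is essentially no obstacle here: Corollary~\ref{cor:groups} is a direct specialization of Theorem~\ref{thm:main-transitive} to the deterministic, vertex-transitive case, with the group-theoretic dictionary of Section~\ref{subsec:groups} bridging the two notions of soficity. The substantive content lies entirely in Theorem~\ref{thm:main-transitive} — and, through it, in the structural results of Esperet--Giocanti--Legrand-Duchesne and Jard\'on-S\'anchez on tree decompositions of quasi-transitive minor-excluded graphs, together with the treeability-implies-soficity machinery of Elek--Lipner — so the proof of the corollary itself is just the short reduction described above.
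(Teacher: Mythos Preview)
Your proposal is correct and matches the paper's approach: the corollary is presented there as an immediate consequence of Theorem~\ref{thm:main-transitive} via the group/Cayley-graph dictionary of Section~\ref{subsec:groups}, and you have filled in precisely that reduction. One minor slip---locally finite graphs can certainly have infinite minors (e.g., the graph $\mathbb{Z}$ has itself as a minor)---is harmless, since Theorem~\ref{thm:main-transitive} already allows countable $H$ and the paper's definition of ``minor-excluded'' takes $H$ finite anyway.
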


\noindent\textit{Remark.} As observed in~\cite{ostrovskii2015metric}, minor-exclusion need not be a property of the group itself, in the sense that there are groups---such as the additive group $\mathbb{Z}^2$---which admit Cayley graphs that are minor-excluded while also admitting Cayley graphs containing every countable graph as a minor.

Gromov~\cite{gromov1999endomorphisms} asked the following question, which remains open to this day. 

\begin{problem}[Gromov]\label{sofic-groups}
    Is every group sofic?
\end{problem}

It is widely believed that the answer should be negative. A possitive answer to the Aldous-Lyons problem would have also implied a positive answer to Problem~\ref{sofic-groups}. The interest in sofic groups has increased drastically since their introduction, mainly due to the growing number of important properties that we now know they satisfy. For example, it has been shown that the Gottschalk surjunctivity conjecture~\cite{kerr2011entropy} and the Kaplansky conjecture~\cite{elek2003sofic} hold for all sofic groups (see, also,~\cite{luck2002l2,pestov2008hyperlinear,thom2008sofic,bowen2010measure}). We refer the reader to~\cite{capraro2015introduction} for a more careful treatment of the rich theory of sofic groups.

 While solving Problem~\ref{sofic-groups} has proven to be extremely challenging, some partial progress has been made towards a negative answer. In~\cite{de2020stability} and~\cite{lubotzky2020non}, some groups were shown to not be approximable by finite-dimensional unitary groups $U(n)$ under two different kinds of approximations (both of these notions of approximability are similar in spirit to soficity; we will not discuss the precise definitions here). Even more recently, a promising strategy to prove the existence of non-sofic groups has been put forward in~\cite{chapman2025stability} and~\cite{gohla2024high}. 

Just as in the case of unimodular graphs, there are several properties of groups which are known to imply soficity. As we have already mentioned, two of the most important such properties are amenability and residual finiteness. We will discuss graph amenability in Section~\ref{subsec:expansion-amenable}. A group $\Gamma$ is \textit{residually finite} if, for every nontrivial element $g$, there exists a  homomorphism from $\Gamma$ to a finite group which does not map $g$ to the identity. Notably, the class of residually finite groups is an extension of the class of free groups. The fact that free groups are sofic has been known for a long time, and also follows from the fact that unimodular random rooted trees are sofic. It is known that soficity is preserved under several group operations, including cartesian products, free products and amalgamated free products over amenable subgroups~\cite{elek2011sofic}, wreath products~\cite{hayes2018metric}, and graph products~\cite{ciobanu2014sofic}. Other recent developments surrounding soficity of groups include~\cite{lazarovich2024surface,gao2024soficity,gao2024all,gao2024sofic}.  

Some aspects of the interplay between group structure and minor-exclusion have also been studied by multiple authors. For instance, it was shown in~\cite{ostrovskii2015metric} that every finitely generated, one-ended group admits a Cayley graph that is not minor-excluded, and some time later it was proven that a finitely generated group is virtually free if and only if each of its Cayley graphs is minor-excluded~\cite{khukhro2023characterisation} (see~\cite{hamann2024minor} for an extension of this result to the setting of quasi-transitive graphs). Particularly related to our paper are the results from~\cite{conley2021one}, which, among other things, imply that every finitely generated group admitting a planar Cayley is treeable. Note that Corollary~\ref{cor:groups} generalizes this result. The proof in~\cite{conley2021one} relies on certain algebraic structure (which goes by the name of a $2$-basis) that appears to not be available for more general classes of minor-excluded Cayley graphs.

\subsection{About our proofs}\label{subsec:about-proofs} The proofs of theorems~\ref{thm:main} and~\ref{thm:main-transitive} rely heavily on the notions of treewidth and tree decompositions, both of which have been extremely fruitful within the field graph theory since their reinvention and popularization by Robertson and Seymour~\cite{robertson1984graph}. We are not the first to study the interplay between treewidth and soficity. Indeed, as we already mentioned, it has been shown in~\cite{jardon2023applications} and~\cite{chen2023tree} that every unimodular random rooted graph of bounded treewidth is treeable, and hence also sofic. This result will play a crucial role in our proofs.

Let us provide a very informal description of the strategy that we will follow in order to prove Theorem~\ref{thm:main} (a more detailed sketch will be given in Section~\ref{sec:strategy}). For this presentation, we restrict our attention to the case of planar graphs where every finite set of vertices expands by at least a positive constant factor, meaning that every finite set of vertices $S$ contains $\Omega(|S|)$ elements which are adjacent to some vertex outside $S$. The planar setting allows us to visualize some of the main ideas in the proof, and the expansion assumption turns out to be non-essential. We remark that the strategy we follow is very different from the one used in any of the works we have already mentioned. We hope that this sketch will make it easier for the reader to follow the rest of the paper. 

Consider an infinite, one-ended, expanding planar graph $G$ sampled according to a unimodular measure on rooted graphs. Throughout this graph, we will select several finite, pairwise disjoint sets of vertices, which will be called \textit{filaments}. The picture one should have in mind is that each of these filaments spans a finite and tree-like connected subgraph of $G$, which is large but rather sparse. These sets of vertices will be constructed by means of a randomized local algorithm, which decides whether a vertex will belong to a filament by observing a ball of bounded radius around it. We then delete from the graph each vertex that belongs to one of these filaments, obtaining a new graph $G_{\operatorname{thin}}$. If the filaments are chosen appropriately, then $G_{\operatorname{thin}}$ will have locally-small treewidth; this is exemplified in Figure~\ref{fig:filaments_intuition}. More precisely, we will construct the filaments so that there exist two positive integers $t$ and $r$, with $r$ much larger than $t$, such that every ball of radius less than $r$ in $G_{\operatorname{thin}}$ has treewidth at most $t$. Intuitively, this occurs because the portions of the graph that lie in between the filaments resemble "thin corridors", and the macroscopic cycles formed by these corridors will show up only within balls of $G_{\operatorname{thin}}$ which are large enough to contain an entire filament in their interior. We claim that any such ball must be very large. Indeed, suppose that $C$ is a cycle in $G_{\operatorname{thin}}$ which encloses at least one filament, and let $\overline{C}$ denote the set of vertices of $G$ which either belong to $C$ or lie in the finite region enclosed by this cycle. Since $G$ is one ended, the set $\overline{C}$ must be finite. On the other hand, since each of the filaments contains many vertices, $\overline{C}$ must also contain many vertices. Given that $G$ is an expander, the set of elements of $\overline{C}$ which are adjacent to some vertex outside $\overline{C}$ must have size at least some constant times $|\overline{C}|$. All of these elements are actually in $C$, so $C$ must be a very long cycle, as desired. In this interpretation, $t$ corresponds to the width of the corridors, and $r$ is the smallest radius of a ball in $G_{\operatorname{thin}}$ which encloses at least one of the filaments.  

\begin{figure}[ht]
    \centering
    \includegraphics[width=.97\linewidth]{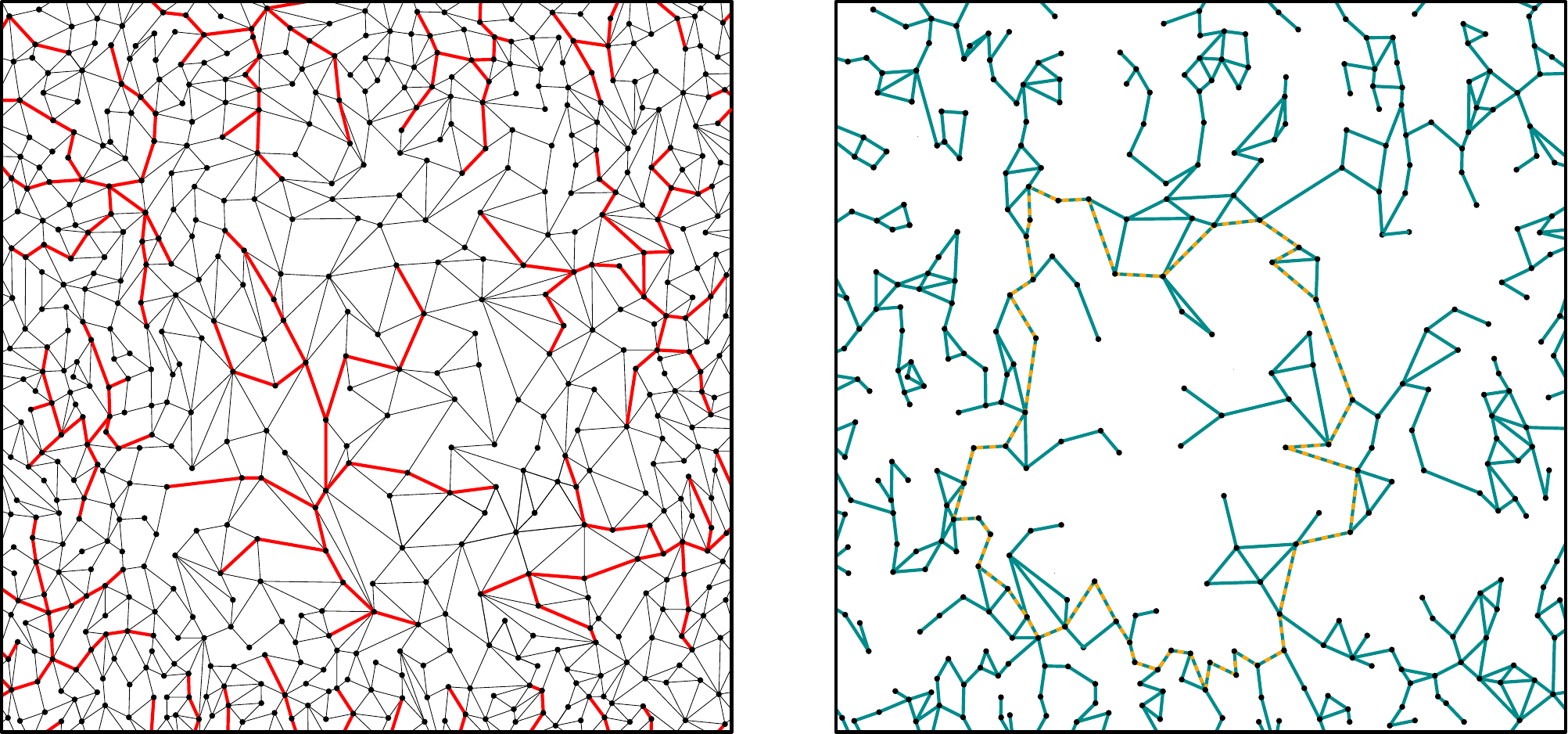}
    \caption{On the left, we have depicted a portion of an infinite, one-ended, expanding, planar graph $G$. Some large but finite tree-like structures, called filaments, have been selected throughout the graph, and are highlighted in red. After removing these filaments from the graph, we obtain a new graph $G_{\operatorname{thin}}$, a portion of which is shown on the right. If the filaments are chosen appropriately, then $G_{\operatorname{thin}}$ will have locally-small treewidth, in the sense that any ball in this graph whose radius is not too large will have small treewidth. In essence, the only structures within $G_{\operatorname{thin}}$ which are stopping the graph from having small treewidth are the macroscopic cycles which enclose at least one of the filaments; one such cycle has been marked with dotted yellow lines. Since $G$ is an expander and the filaments are large, any such cycle must also be large.}
    \label{fig:filaments_intuition}
\end{figure}

The fact that $G$ is an expander also suggests that most of the vertices of $G$ should lie somewhat far (say, at distance at least $t/10$) from the filaments. More specifically, the expansion guarantees that the set of vertices of $G$ which lie at distance at most $t/10$ from some individual filament is larger than the filament itself by at least a multiplicative factor which depends only on $t$ and can be made arbitrarily large by making $t$ large. This phenomenon will essentially allow us to make the sizes of the filaments arbitrarily large, while keeping fixed both $t$ and the fraction of vertices that belong to a filament. Making the filaments larger amounts to letting $r$ go to infinity. Thus, by taking the limit of this construction, one can arrive at a new family of \textit{super-filaments}, which are infinite and whose deletion results in a graph of bounded treewidth. 

The next and final step consists of iterating the above construction while letting the width of the corridors (i.e., $t$) go to infinity. Crucially, while doing this, the fraction of vertices that belong to one of the filaments will converge to $0$. As the density of the filaments tends towards $0$, the graphs that arise after deleting the super-filaments will converge in the Benjamini-Schramm sense towards the original unimodular random rooted graph. Furthermore, each graph in this sequence has bounded treewidth---although the treewidth gets larger the further down we go in this sequence---and is thus sofic (as shown in~\cite{jardon2023applications} and~\cite{chen2023tree}). It follows that the initial unimodular measure must itself be sofic. A large portion of our paper is devoted to making the construction of the filaments and the properties of $G_{\operatorname{thin}}$ precise, and we consider this to be our main contribution at a technical level.

A more detailed sketch of this proof will be given in Section~\ref{sec:strategy}.

\subsection{Subsequent work}\label{subsec:subsequent-work} The most natural question left unanswered by our work is whether the one-endedness assumption can be removed from the statement of Theorem~\ref{thm:main}; we believe this to be the case.

\begin{conjecture}\label{conj:many-ends}
    For any finite graph $H$, every unimodular random rooted graph $(G,\rho)$ such that a.s.\ $G$ does not have $H$ as a minor is sofic.
\end{conjecture}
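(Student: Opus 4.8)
I would begin by reducing to a single case. A unimodular random rooted graph has $0$, $1$, $2$, or infinitely many ends almost surely, and every $H$-minor-free graph is $d(H)$-degenerate, so all of its finite subgraphs --- and hence the expected degree of the root --- have $O_H(1)$ edges per vertex. Thus the $0$-ended case is trivial (finite graphs are sofic), the $1$-ended case is Theorem~\ref{thm:main}, and the $2$-ended case follows from Theorem~\ref{thm:ends}, since two ends together with bounded average degree gives hyperfiniteness. Since every component of an $H$-minor-free graph is again $H$-minor-free, the standard reduction to the component of the root shows that Conjecture~\ref{conj:many-ends} is equivalent to the assertion that a connected $H$-minor-free unimodular random rooted graph with infinitely many ends is sofic.

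The natural strategy is to cut $G$ along finite separators into one-ended or finite pieces. Concretely, I would try to produce a \emph{unimodular} random tree-decomposition of $G$ of bounded adhesion whose torsos are almost surely one-ended or finite, and which is canonical enough that the torso at a uniformly random bag is itself a unimodular random rooted graph. Each such torso is obtained from an $H$-minor-free graph by adding cliques on bounded-size adhesion sets, so --- for a suitably canonical decomposition --- it should again be $H'$-minor-free for some finite $H'=H'(H)$; being also one-ended, finite, or two-ended of bounded average degree, it is then sofic by the cases above. It remains to glue: one must show that if a unimodular random rooted graph admits a unimodular bounded-adhesion tree-decomposition all of whose torsos are sofic, then it is itself sofic. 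I would prove this in the language of marked graphs used for strong soficity (Section~\ref{subsec:markings}): approximate each torso-type by finite graphs carrying the bounded-size adhesion sets as marks, and then assemble finitely many such approximations along a large finite subtree of the decomposition tree, matching the marks; boundedness of the adhesion is what makes this local, so the resulting finite graphs converge to $(G,\rho)$. This is the graph-theoretic analogue of the theorem of Elek--Szab\'o, and others, that an amalgamated free product of sofic groups over a finite subgroup is sofic; I expect it to be technical but not to require a fundamentally new idea.

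The real obstacle is the first structural step, which amounts to the assertion that $H$-minor-free graphs are \emph{accessible} in the sense of Dunwoody, in a form robust enough to be made unimodular; as far as I know this is not available off the shelf. One should not hope to obtain it cheaply from ``thin ends'', since minor-excluded graphs can have thick ends even when they have infinitely many ends --- for instance, an infinite tree-like $1$-sum of copies of $\mathbb{Z}^2$ is $K_5$-minor-free, has infinitely many ends, and has thick ends --- so a genuine accessibility argument is needed. I would attempt to derive it from the Robertson--Seymour structure theorem (an $H$-minor-free graph has a bounded-adhesion tree-decomposition into pieces almost-embeddable in surfaces of bounded genus), combined with accessibility of bounded-genus and near-embeddable graphs and the inheritance of accessibility along bounded-adhesion tree-decompositions into accessible parts. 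Recent work on canonical tree-decompositions --- by Carmesin and coauthors, and, in the quasi-transitive case, by Esperet, Giocanti, and Legrand-Duchesne and by Hamann --- should provide the right machinery for keeping everything canonical, and hence measurable.

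If $H$-minor-free graphs turn out not to be accessible in a usable way, the alternative would be to adapt this paper's filament technique directly to the many-ended setting: select filaments whose removal makes the treewidth locally small, and argue that any finite separator cutting off a region that contains a filament must be large. In the one-ended case this used that such a region is finite; with many ends a separator can leave several infinite sides, and controlling how the filaments interact with this branching structure is precisely the difficulty that would have to be overcome. In my view this last point is the genuinely open part of the problem.
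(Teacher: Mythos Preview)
This statement is Conjecture~\ref{conj:many-ends} in the paper and is explicitly left open: the paper does \emph{not} prove it, so there is no proof to compare your proposal against. Your write-up is, correspondingly, not a proof but a strategy outline, and you say so yourself (``the genuinely open part of the problem''). That is the correct assessment.

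Your proposed route---reduce to the infinitely-many-ends case via Theorem~\ref{thm:ends} and bounded average degree (which $H$-minor-freeness gives by degeneracy), then seek a unimodular bounded-adhesion tree-decomposition into one-ended pieces and glue---is exactly the approach the paper itself suggests in Section~\ref{subsec:subsequent-work}, where it points to Stallings-type decompositions and the references~\cite{tserunyan2018descriptive,hamann2022stallings,ishikura2025decomposition}. The paper also flags accessibility as the natural intermediate hypothesis (``A less ambitious goal would be to prove this statement under the additional assumption that $G$ is a.s.\ an accessible graph''), matching your identification of accessibility as the real obstacle. Your alternative plan of adapting the filament construction to the many-ended setting is likewise anticipated by the paper's discussion, and you correctly isolate the difficulty: in the one-ended case the argument uses that a finite separator leaves at most one infinite side, and this fails with infinitely many ends.

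Two cautions on details of your outline. First, your sentence about ``the standard reduction to the component of the root'' is off: in this paper rooted graphs are connected by definition, so there is nothing to do there; the relevant reduction is to \emph{ergodic} components, after which the number of ends is a.s.\ constant. Second, the claim that torsos of a bounded-adhesion decomposition of an $H$-minor-free graph are $H'$-minor-free for some $H'(H)$ is not automatic---adding cliques on adhesion sets can in principle create new minors---so this step would need to come from the structure theorem itself (where the torsos are almost-embeddable in bounded-genus surfaces) rather than from a generic decomposition.
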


A less ambitious goal would be to prove this statement under the additional assumption that $G$ is a.s.\ an accessible graph, as defined in Section~\ref{thm:ends}. It is also possible that every unimodular random rooted graph as above is actually treeable. 

\begin{problem}
   Let $H$ be a finite graph. Is is true that a unimodular random rooted graph $(G,\rho)$ such that $G$ a.s.\ does not have $H$ as a minor must be treeable?
\end{problem}

Anush Tserunyan has independently considered this question, as well as some generalizations of it~\cite{tserunyan_communication}. We remark that this problem remains open even in the one-ended case (as mentioned earlier, it is known to hold for every planar, one-ended, unimodular graph of bounded average degree~\cite{timar2023unimodular}). It is possible that one can reduce Conjecture~\ref{conj:many-ends} to Theorem~\ref{thm:main} by constructing some sort of decomposition of $(G,\rho)$ into one-ended graphs, perhaps in the style of Stalling's theorem~\cite{stallings1971group} for groups with more than one end. Multiple decomposition results along these lines have been obtained for graphs~\cite{tserunyan2018descriptive,hamann2022stallings,ishikura2025decomposition}, but it is not clear to us whether, in their current form, they can be directly applied to our setting.

In some sense, minor-excluded graphs can be thought of as being two-dimensional. This intuition is supported by the remarkable graph structure theorem of Robertson and Seymour, as well as a more recent result by Bonamy et al.~\cite{bonamy2023asymptotic} regarding the asymptotic dimension of minor-excluded graph classes. It was suggested some time ago by Lovász~\cite{lovasz2006graph} that higher-dimensional analogues of minor exclusion should be investigated. Wagner~\cite{wagner2013minors} also asked whether there is a good way of extending the study of minor-excluded graphs to higher-dimensional simplicial complexes. These research directions have seen some success; see~\cite{carmesin2023embedding} and the references therein. It seems possible that some extension of our techniques (in particular, the construction of the filaments mentioned in the previous section) could be used to reduce the "intrinsic dimension" of either graphs or simplicial complexes by deleting only an arbitrarily small fraction of their vertices. Regardless of whether this is correct or not, we believe that the interplay between dimension and soficity deserves to be studied further, and we pose the following question.
\begin{problem}
    Let $(G,\rho)$ be a unimodular random rooted graph such that $G$ can a.s.\ be represented as the skeleton of some $3$-dimensional simplicial complex which is homeomorphic to $\mathbb R^{3}$ (and which might depend on $G$). Is it true that $(G,\rho)$ must be sofic?
\end{problem}

Note that if $(G,\rho)$ satisfies the properties in the above statement, then $G$ must a.s.\ be one-ended. Since every unimodular random rooted graph which is a.s.\ planar and one-ended must also be sofic, the answer to the two-dimensional version of this question is positive.

              To conclude this section, we step away from the study of soficity in order to briefly discuss some potential algorithmic applications of our work. In recent years, the interplay between descriptive combinatorics and distributed algorithm has become apparent; see~\cite{bernshteyn2022descriptive} for a detailed treatment of the topic. At a high level, this connection stems from the fact that most constructions in descriptive combinatorics can be phrased in terms of local distributed algorithms. The construction of the filaments that we will present in Section~\ref{sec:Voronoi and Starfish} falls within this class of local algorithms. Given that several algorithmic tasks are known to be easier to perform on graphs of bounded treewidth (see Courcelle's Theorem~\cite{courcelle1990monadic,conley2021one}, as well as some more recent developments~\cite{alon2010solving,peters2016graphical}), it is possible that our proof techniques could be applied to design novel distributed algorithms for graphs with no shallow $H$-minor (intuitively, a graph has no shallow $H$-minor if every small ball in the graph is $H$-minor-free; the precise definition is given in Section~\ref{subsec:structural}). 

\subsection*{Acknowledgments}
I wish to thank Tom Hutchcroft, \'Adám Timár, Michael Chapman, Anush Tserunyan, and Antoine Poulin for multiple insightful conversations. I am also very grateful to Nike Sun and Jacopo Borga, whose comments helped improve the presentation of this paper significantly.

\section{Proof overview}\label{sec:strategy}

Here, we present a more detailed overview of the proof of Theorem~\ref{thm:main}, which we believe captures most of the important ideas.

Fix a finite graph $H$ and let $(G,\rho)$ denote a unimodular random rooted graph such that $G$ is a.s.\ one-ended and does not have $H$ as a minor. By deleting an arbitrarily small fraction of the vertices of $G$, we may assume that the maximum degree of $G$ is uniformly bounded from above by some integer $D\geq 2$. This operation could cause $G$ to no longer be one-ended a.s., but this turns out to not be a serious issue, since all that we will need is that the initial graph is one-ended. Our end goal consists of showing that there exists a sequence of treeable unimodular random rooted graphs which converge to $(G,\rho)$ in the Benjamini-Schramm sense. 
As we mentioned during the introduction, all treeable unimodular random rooted graphs are sofic. Since limits of sequences of sofic graphs must themselves be sofic, this would yield the desired conclusion. Below, we outline the main steps involved in the construction of a sequence of treeable graphs that approximate $(G,\rho)$.

\subsection{Cutting out non-expanding sets} 

Fix some $\alpha>0$. First, we show that by deleting a (possibly random) set of vertices $\eta_\alpha$ with $\mathbb P_{(G,\rho),\eta_\alpha}[\rho\in\eta_\alpha]\leq \alpha$, $G$ can be partitioned into connected components which are either finite or have (outer) vertex expansion at least $\alpha$, meaning that every finite set of vertices $S$ within one of these components is connected to at least $\alpha |S|$ vertices which belong to the same connected component but not to $S$. For technical reasons, during the actual proof of this result (Theorem~\ref{thm:non-amenable}) we will instead work with inner vertex expansion, but we remark that both of these notions are equivalent up to a factor of $D$. Here, one should interpret $\mathbb P_{(G,\rho),\eta_\alpha}[\rho\in\eta_\alpha]$ as the fraction of vertices that belong to $\eta_\alpha$. The existence of $\eta_\alpha$ is probably knwon to several experts in the field, but we have not seen it explicitly stated anywhere. We remark that, for amenable groups, it is well known that all the components can actually be made finite by removing an arbitrarily small fraction of the vertices (i.e., amenable groups are hyperfinite); this immediately implies that such groups are sofic. 

At an intuitive level, if $G$ does not directly satisfy the desired expansion property, then it must contain some large but finite sets of vertices whose vertex boundaries are---comparatively---very small, and we should be able to separate these poorly-expanding sets from the rest of the graph by means of a percolation that deletes their boundaries. This procedure can then be repeated until we are left with some expanding infinite connected components and some residual finite ones. If the fraction of vertices that have been deleted is small, then this new graph must be close to $(G,\rho)$ with respect to the local topology. 
It then remains to prove that the fraction of deleted vertices, and thus also the approximation error, can be made arbitrarily small. Making all of these arguments precise requires some care---and the mass transport principle, which is discussed in Section~\ref{subsec:unimodular-sofic}. One of the main challenges consists of ensuring that the poorly-expanding sets that are being cut off during a single step do not overlap, as otherwise the union of their boundaries could turn out to contain far too many vertices.

\subsection{Voronoi cells and filaments}\label{subsec:filaments}

Suppose now that $(G,\rho)$ is a unimodular random rooted graph such that $G$ a.s.\ is one-ended and does not have $H$ as a minor, and that $\eta_\alpha$ is such that the graph $G_\alpha=G\backslash\eta_\alpha$ has expansion at least $\alpha$ (we ignore the finite connected components for the remainder of this exposition, as they are easy to handle). Fix a parameter $\varepsilon\in (0,1)$ along with some large positive integer $R$ and let $S$ be a maximal $R$\textit{-separated} set of vertices within $G_\alpha$, meaning that any two of its elements are at distance at least $R$ in $G_\alpha$. This construction of $S$ can be carried out in a measurable way, in the sense that the probability (with respect to the choice of $(G,\rho)$, $\eta_\alpha$, and $S$) that the root $\rho$ belongs to $S$ is well defined. Now, we consider the \textit{Voronoi partition} of $G_\alpha$ with respect to $S$. That is, we assign each vertex of $G$ to the closest element of $S$, where ties are broken at random. For each $s\in S$, let the \textit{Voronoi cell} of $s$, $V_s$, be the set of vertices that have been assigned to $s$. The subgraph $G_\alpha[V_s]$ of $G_\alpha$ induced by $V_s$ is finite and connected, contains the ball $B(s,\lfloor R/2\rfloor)$ of radius $\lfloor R/2\rfloor$ around $s$, and is contained in the ball $B(s, R)$. Since every finite set of vertices in $G_\alpha$ expands by a factor of at least $\alpha$, we get that $|V_s|\geq|B(s,\lfloor R/2\rfloor)|\geq(1+\alpha)^{\lfloor R/2\rfloor}$.

Choose some positive integer $h$ with $h(1+\alpha)^{\lfloor h/2\rfloor}\leq\varepsilon$; one should think of $h$ as being much smaller than $R$. We will construct one filament within each of the Voronoi cells. We start by selecting a maximal $h$-separated subset $W_s\subset V_s$. Essentially, the filament $\Psi_s$ within $V_s$ will consist of those vertices that belong to a tree of minimal size in $G[V_s]$ which contains every element of $W_s$ (during the actual proof, the filaments will be constructed in a slightly different manner in order to avoid running into some issues related to the boundary of $V_s$). Note that each filament is connected by definition. Furthermore, the fact that every vertex of $G_\alpha$ has degree at most $D$ implies that the set of vertices in $V_s$ which are at distance at most $h$ from any individual element of $W_s$ is less than $D^{h+1}$. By the maximality of $W_s$, this yields $|W_s|\geq |V_s|/D^{h+1}$, which in turn implies $|\Psi_s|\geq |V_s|/D^{h+1}$. On the other hand, the balls of radius $\lfloor h/2\rfloor$ around the elements of $W_s$ are pairwise disjoint. Using once again the fact that $G_\alpha$ is an expander (and ignoring for now the fact that some of these balls might not be fully contained within $V_s$), we deduce that $|W_s|\leq |V_s|/(1+\alpha)^{\lfloor h/2\rfloor}$. Lastly, using the maximality of $W_s$ one more time, one can conclude that \[|\Psi_s|\leq 2h(|W_s|-1)+1<2h|V_s|/(1+\alpha)^{ \lfloor h/2\rfloor}+1\leq 2\varepsilon|V_s|\,,\] where the last inequality follows from the choice of $h$. Most importantly, we have obtained the two following bounds:
\begin{enumerate}[label=(\roman*)]
    \item $|\Psi_s|\geq|V_s|/D^{h+1}\geq(1+\alpha)^{\lfloor R/2\rfloor}/D^{h+1}$;
    \item $|\Psi_s|\leq 2\varepsilon|V_s|$.
\end{enumerate}

Next, let $\omega:=V\backslash \bigcup_{s\in S}\Psi_s$ denote the set of vertices that do not belong to any of the filaments, and write $G_\alpha[\omega]$ to denote the subgraph of $G_\alpha$ induced by $\omega$. Using the mass transport principle, together with property (ii) above, one can easily deduce that $\mathbb P[\rho\not\in \omega]\leq 2\varepsilon$. Intuitively, if within each Voronoi cell we are deleting at most a $2\varepsilon$ fraction of the vertices, then the same should be true globally. Note that $\varepsilon$ and $h$ can be kept constant while letting $R$ (and thus also $|\Psi_s|$) go to infinity. In fact, it will be helpful to think of $\varepsilon$ and $h$ as being fixed for the time being.

A key property of our construction, which should become clear later on, is that both the filaments and the set $\omega$ can be encoded by means of a marking, as defined in Section~\ref{subsec:markings}.

\subsection{Locally-bounded treewidth}\label{subsec:locally_treelike}

As mentioned in Section~\ref{subsec:about-proofs}, we shall prove that $G_\alpha[\omega]$ is \textit{locally-thin}, in the sense that there exist some positive integers $t$ and $r$, with $t$ depending only on $h$ and $r$ growing with $R$, such that every ball of radius $r$ in $G_{\alpha}[\omega]$ has treewidth at most $t$. The main tool that will be used to control the treewidth of balls within $G_\alpha[\omega]$ is the grid-minor theorem of Robertson and Seymour~\cite{robertson1986graph} (see Section~\ref{subsec:structural} and, in particular, Theorem~\ref{thm:treewidth-walls}). This foundational result tells us that a there exists a function $g$ of $t$ such that any graph which does not contain a subdivision of the $g(t)\times g(t)$-wall, $W_{g(t)\times g(t)}$, has treewidth at most $t$ (the $7\times 7$-wall has been depicted in Figure~\ref{fig:wall} for the reader's convenience). With this result in hand, our goal is to show that any subdivision of $W_{g(t)\times g(t)}$ within $G_\alpha[\omega]$ must contain so many vertices that it cannot possibly fit within a ball of radius $r$. 
\begin{figure}[ht]
    \centering
    \includegraphics[width=0.37\linewidth]{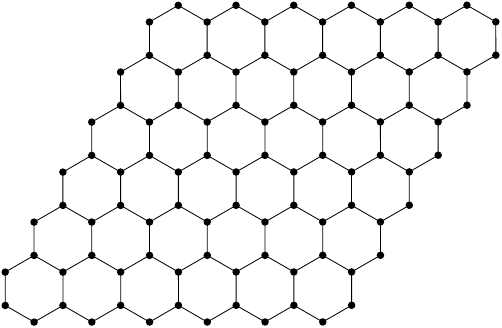}
    \caption{The $k\times \ell$-wall, $W_{k\times\ell}$, is simply the skeleton of a hexagonal grid with $k$ rows and $\ell$ columns. The $7\times 7$ wall is shown above.}
    \label{fig:wall}
\end{figure}

Choose $t$ so that $g(t)$ is far larger than $h$, and suppose that there is some subdivision $W$ of the $g(t)\times g(t)$-wall in $G_\alpha[\omega]$. A path $P$ in $G$ which connects two vertices of $W$ will be called a \textit{leap} if it satisfies the following properties:
\begin{itemize}
    \item the only vertices of $P$ that belong to $W$ are its two endpoints;

    \item the graph $P\cup W$ is not planar.
\end{itemize}
We will prove that, in order for $W$ to be contained within a ball of radius at most $r$ in $G_{\alpha}[\omega]$, there must be many vertices of $W$ which are endpoints of at least one leap. Let us begin by considering a large collection $\{W_1,\dots,W_k\}$ of vertex-disjoint subdivisions of the $5h\times 5h$-wall within $W$ which are arranged in a $\sqrt{k}\times \sqrt{k}$-grid-like manner and such that any two of them are separated by at least one row or column of $W$; note that $k$ can be taken to be of order $\Omega((g(t)/h)^2)$. For each $1\leq i\leq k$, let $B_i$ denote the set of vertices on the outer-most layer of $W_i$. For each such $i$, we pick some vertex $u_i$ which belongs to $W_i$ and so that any path connecting $u_i$ to $B_i$ within $W_i$ must contain at least $2h$ vertices that have degree $3$ in $W_i$ (almost equivalently, we can assume that $u_i$ is at distance at least $2h$ from $B_i$ in the copy of $W_{5h\times 5h}$ obtained from $W$ by undoing all edge subdivisions). Suppose that $s\in S$ is such that $u_i\in V_s$, and note that $u_i$ must be at distance at most $h$ from $\Psi_s$ (with respect to $G_\alpha$). Thus, there is a path $P_i$ in $G_\alpha$ which has length at most $h$ and connects $u_i$ to $\Psi_s$. There are two possibilities: either every vertex of $P_i\cap W$ belongs to $W_i$, or there is some section of $P_i$ which constitutes a leap with at least one endpoint inside $W_i$ (see Figure~\ref{fig:leap_to_void}). In the second scenario, we have found a leap with an endpoint in $W_i$, and we can move on to the next index between $1$ and $k$. Assume that there are at least two indices $i$ and $j$ with the property that there exists no leap with an endpoint within $W_i$ or $W_j$. Then, the first scenario must occur for both $u_i$ and $u_j$.
\begin{figure}[ht]
    \centering
    \includegraphics[width=0.93\linewidth]{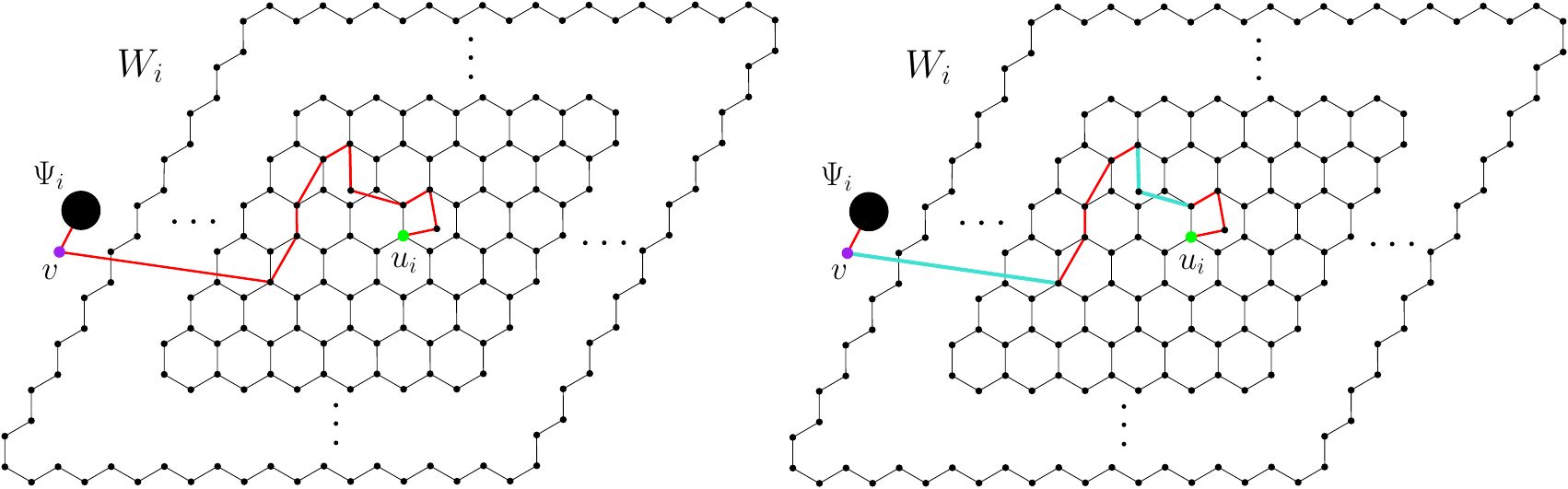}
    \caption{Both images depict $W_i$, which is a subdivision of the $5h\times 5h$-wall (some of its edges may be subdivided, even if it is not shown in the picture). On the left, we see a path of length at most $h$ in $G_{\alpha}$ (red) connecting $u_i$ (green) to the filament $\Psi_i$. This path goes through a vertex $v$ (purple) of $W$ that lies outside $W_i$. On the right, two subsections of this path have been highlighted (turquoise). Both of these sections constitute leaps with an endpoint inside $W_i$. If it were the case that no such section exists, then the path joining $u_i$ to $\Psi_i$ would not be long enough to reach a vertex of $W$ outside $W_i$.}
    \label{fig:leap_to_void}
\end{figure}

Within $G$, the set of vertices of $W_{\backslash i,j}=W\backslash(W_i\cup W_j)$ must separate $u_i$ from $u_j$ (in the sense that $u_i$ and $u_j$ belong to different connected components of $G\backslash W_{\backslash i,j}$, or else there would be a leap with one endpoint in $W_i$ and the other in $W_j$. Since the first of the two scenarios described in the previous paragraph occurs for both indices, we also know that the connected components of $u_i$ and $u_j$ in $G_\alpha\backslash W_{\backslash i,j}$ must contain the filaments $\Psi_i$ and $\Psi_j$, respectively. Thus, property (i) near the end Section~\ref{subsec:filaments} tells us that each of these connected components contain at least $(1+\alpha)^{\lfloor R/2\rfloor}/D^{h+1}$ vertices. Since $G$ is one-ended and $W_{\backslash i,j}$ is finite, one can conclude that at least one of these two connected components must be finite too. The fact that every finite set of vertices within $G_\alpha$ has expansion at least $\alpha$ can be restated as saying that in order to separate some finite set of $m$ vertices from the rest of graph we need to remove at least $\alpha m$ vertices from the graph. This implies that \[|W_{\backslash i,j}|\geq \alpha(1+\alpha)^{\lfloor R/2\rfloor}/D^{h+1}\,.\] As every vertex of $G$ has degree at most $D$, each ball of radius of radius $r$ contains less than $D^{r+1}$ vertices. Hence, if $R$ and $r$ satisfy \[D^{r+1}\leq\alpha(1+\alpha)^{\lfloor R/2\rfloor}/D^{h+1}\,,\] it is impossible for $W_{\backslash i,j}$---and thus also $W$---to lie entirely within a ball of radius $r$ in $G_\alpha[\omega]$, and we are done. 

We have shown that, without loss of generality, it is enough to restrict our attention to the case where, for all indices $1\leq i\leq k-1$, there is a leap $P_{i,\operatorname{leap}}$ with an endpoint in $W_i$. We then move on to show that if $k$ is sufficiently large with respect to the order of $H$, then the graph $W\cup P_{1,\operatorname{leap}}\cup\dots\cup P_{k-1,\operatorname{leap}}$ must contain $H$ as a minor, contradicting the initial assumption that $G$ is $H$-minor-free. We will not go into much detail here regarding this final part of the argument, but we remark that this statement should not be too surprising. Indeed, it has been known since the work of Robertson and Seymour that any planar graph is contained as a minor within any sufficiently large wall. Of course, a wall does not admit any non-planar minors, but the leaps will allow us to bypass the planarity. In essence, the proof proceeds by taking a drawing of $H$ on the plane (which might have some pairs of crossings edges) and attempting to reproduce this drawing within $W$. For any pair of crossing edges in this drawing, we will utilize some leap as a sort of bridge that allows us to draw one of the two edges without intersecting the other one. It might be helpful for the reader to compare this to the well-known fact that any graph $H$ is contained as a minor within any sufficiently large $k\times k$-grid to which we have added some edges connecting every pair of diagonally opposite vertices which lie in the same cell (see Figure~\ref{fig:crossed-grid}); in fact, we will use this result during our proof.

Overall, we have shown that in order for $W$ to be contained within a ball of radius $r$ in $G_\alpha[\omega]$, there must be many leaps whose endpoints are spread out throughout $W$. However, the graph formed by taking the union of $W$ and each of these leaps must have $H$ as a minor. This contradiction concludes the argument.

\subsection{Taking limits} 

Now, we keep $\varepsilon$ and $h$ fixed and let $R,r\rightarrow\infty$. In the limit---and after passing to a subsequence---this produces a set of vertices $\omega_\varepsilon$ with $\mathbb P[\rho\not\in\omega_\varepsilon]\leq 2\varepsilon$, and such that $G_\alpha[\omega_\varepsilon]$ has treewidth at most $t$. As shown in~\cite{jardon2023applications} and~\cite{chen2023tree}, this graph will be sofic. Finally, we can let $\varepsilon\rightarrow 0$, and then $\alpha\rightarrow 0$, in order to obtain a sequence of sofic graphs which converge in the Benjamini-Schramm sense to $(G,\rho)$, as desired.

\subsection*{Organization of the paper} We include the relevant preliminaries in Section~\ref{sec:prelim}. In Section~\ref{sec:expansion}, we discuss how to efficiently cut out non-expanding sets of vertices. In Section~\ref{sec:Voronoi and Starfish}, we describe the construction of the filaments, and then in Section~\ref{sec:local-treewidth} we show that the graph induced by their complement is locally-thin. All these ingredients are then combined in Section~\ref{sec:limits} in order to complete the proof of Theorem~\ref{thm:main}. Lastly, Section~\ref{sec:transitive} contains the proof of Theorem~\ref{thm:main-transitive}.

\section{Preliminaries}\label{sec:prelim}
We use $[k]$ to denote the set $\{1,\dots,k\}$ equipped with the discrete topology. In fact, every finite topological space that we consider will be equipped with the discrete topology. When working with a graph $G$, we use $V$ and $E$ to denote its set of vertices and edges, respectively. For a subset $U\subseteq V$, we let $G[U]$ be the subgraph of $G$ induced by $U$. We will often use $|G|$ to denote $|V|$. Throughout the paper, all distances between vertices of a graph are taken with respect to the shortest-path metric. All the graphs considered in this paper are assumed to be locally finite. Some of the observations we make, both in this and later sections, rely on this assumption, even if it is not explicitly mentioned. We will often omit the term almost surely when it can be inferred. 

An important fraction of our notation and preliminary results is borrowed from~\cite{aldous2007processes} and~\cite{angel2018hyperbolic}.

\subsection{Rooted graphs, local topology, and marks}\label{subsec:rooted-graphs}
By a \textit{rooted graph} we mean a pair $(G,\rho)$ such that $G$ is a locally finite, connected graph, and $\rho$ is a distinguished vertex, which we call the \textit{root}. An \textit{isomorphism} of rooted graphs is simply a graph isomorphism satisfying the additional property that it preserves the root. Let $\mathcal G_*$ denote the space of isomorphism classes of rooted graphs.
For any graph $G$, any vertex $v$ and any $r\in\mathbb N$, let the $r$\textit{-neighborhood} of $v$ in $G$, $B_G(v,r)$, be the rooted subgraph of $G$ which has $v$ as its root and is induced by the set of vertices that are at distance at most $r$ from $v$. The \textit{local topology} is the topology on $\mathcal G_*$ induced by the metric \[d_\text{loc}((G,\rho),(G',\rho')):=2^{-R}\,,\] where \[R:=\sup\{r\in \mathbb N \mid B_G(\rho,r)\text{ and  } B_{G'}(\rho',r) \text{ are isomorphic rooted graphs}\}\,.\] This topology makes $\mathcal G_*$ into a complete and separable space. 

Similarly, by a \textit{doubly rooted graph} we mean a locally finite, connected graph with two distinguished vertices. Isomorphisms of doubly rooted graphs must preserve both roots. We let $\mathcal G_{**}$ be the space of isomorphism classes of doubly rooted graphs, and extend the definition of the local topology to this space in the natural way. We will often write $(G,\rho,o)$ to refer to an element of $\mathcal G_{**}$.

It will be important for us to consider a further generalization of the spaces $\mathcal G_*$ and $\mathcal G_{**}$ where one allows marks (i.e., labels) on the vertices and edges. A \textit{marked graph} consists of a connected, locally finite graph $G=(V,E)$ along with a function $m:V\cup E\rightarrow \mathbb X$ which associates to each vertex and edge of $G$ a \textit{mark} in some polish space $\mathbb X$, whose subjacent metric we denote by $d_{\mathbb X}$. The local topology on the space of marked rooted graphs with marks in $\mathbb X$ is the topology induced by the distance function \[d_{\operatorname{loc}}((G_,\rho,m),(G',\rho',m')):=2^{-R}\,,\] where $R$ now denotes the supremum of all those $r\in\mathbb R$ such that there exists an isomorphism of rooted graphs $\phi$ between $B_{G}(\rho,r)$ and $B_{G'}(\rho',r)$ such that $d_{\mathbb X}(m(v),m(\phi(v)))\leq 1/r$ for all $v\in B_G(\rho,r)$. Once again, this topology induces a complete and separable metric space. The topology on the space of marked doubly rooted graphs is defined analogously. We denote the spaces of marked rooted and doubly rooted graphs with marks in $\mathbb X$ as $\mathcal G_*^{\mathbb X}$ and $\mathcal G_{**}^{\mathbb X}$. 

A \textit{random rooted graph} is a random variable taking values in the space $\mathcal G_*$ equipped with the local topology. Similarly, a \textit{random rooted} $\mathbb X$\textit{-marked graph} is a random variable taking values in $\mathcal G_*^{\mathbb X}$. \textit{Random doubly rooted graphs} are defined analogously.

\subsection{Convergence, soficity, and unimodularity}\label{subsec:unimodular-sofic}
Consider a sequence $\{G_n\}_{n\geq 1}$ of finite, connected (and possibly marked) graphs whose orders tend to infinity. Each $G_i$ gives rise to a random rooted graph $(G_i,\rho_i)$ where the underlying graph is always equal to $G_i$ and $\rho_i$ is chosen uniformly at random from the set of vertices. We say that the sequence $\{G_n\}_{n\geq 1}$ is \textit{Benjamini-Schramm convergent} if the sequence $\{(G_i,\rho_i)\}_{n\geq 1}$ converges in the weak sense (with respect to the local topology on $\mathcal G_*$). The statement that the sequence converges can be restated as saying that, for every positive integer $r$, if we sample a ball of radius $r$ from $G_i$ by choosing the root uniformly at random, then the probability distribution that arises on balls of radius $r$ converges weakly as $n\rightarrow \infty$. This notion of convergence was first introduced and studied by Benjamini and Schramm~\cite{benjamini2011recurrence} and Aldous and Steele~\cite{aldous2004objective}. The fact that $\mathcal G_*$ is complete can be seen to imply that any convergent sequence of finite graphs has as its limit a random rooted graph $(G,\rho)$, which we call the \textit{Benjamini-Schramm limit} of the sequence. A random rooted graph will be called \textit{sofic} if it can be obtained as the Benjamini-Schramm limit of a sequence of finite graphs. Benjamini-Schramm convergence is often called \textit{local convergence} or simply \textit{weak convergence}.

A natural question now is whether every random rooted graph is sofic. Not surprisingly, the answer is no. Indeed, random rooted graphs that arise as limits of finite graphs can easily be seen to enjoy an additional property, which goes by the name of unimodularity. There are multiple equivalent definitions of unimodular random rooted graphs; the one we give below is perhaps the most useful one, even if not the most intuitive.\footnote{Another such definition, which might be easier to parse, is as follows: the random rooted graph $(G,\rho)$ is \textit{unimodular} if the random rooted graph obtained by biasing $(G,\rho)$ according the degree of $\rho$ is precisely the stationary measure of the Markov chain that we get from letting the root of $(G,\rho)$ perform a simple random walk on $G$. One can compare this to the analogous well-known statement for finite graphs, which tells us that the stationary distribution of a simple random walk on a finite graph is proportional to the degree distribution. See~\cite{aldous2007processes} for details.} We refer the reader to~\cite{aldous2007processes} for a much more in-depth treatment of unimodularity.

A random rooted graph $(G,\rho)$ is said to satisfy the \textit{mass transport principle} if, for every Borel function $f:\mathcal G_{**}\rightarrow \mathbb R_+$, we have that 
\begin{equation}\label{eq:MTP}
    \mathbb E_{(G,\rho)}\left[\sum_{o\in V(G)}f(G,\rho,o)\right]=\mathbb E_{(G,\rho)}\left[\sum_{o\in V(G)}f(G,o,\rho)\right]\,.
\end{equation}
    The name of this property is inspired by its physical interpretation: Assume we have some amount of mass distributed among the vertices of $G$, and that for each ordered pair of vertices $(\rho,o)$, $f(G,\rho,o)$ units of mass are sent from $\rho$ to $o$. The mass transport principle then tells us that the expected amount of mass going out of the root is the same as the expected amount of mass coming in. The mass transport principle was first introduced by Häggström~\cite{haggstrom1997infinite} as a tool to study percolation on Cayley graphs, which are known to always satisfy the said principle. A Borel function from $\mathcal G_{**}$ to $\mathbb R_+$ is usually referred to as a \textit{mass transport}. A random rooted graph $(G,\rho)$ is \textit{unimodular} if it satisfies the \textit{mass transport principle}. Soficity and unimodularity are defined analogously for marked graphs. We will often say that the law of some random rooted graph is sofic (resp. unimodular); to us, this means exactly the same as saying that the random rooted graph is sofic (resp. unimodular). 
    
    Unimodular graphs satisfy the following very useful property.
\begin{lemma}[Everything shows at the root, Lemma 2.3,~\cite{aldous2007processes}]\label{lem:everything_root}
    Let $(G,\rho,m)$ be a unimodular random rooted $\mathbb X$-marked graph, and fix some $\gamma\in \mathbb X$. If the mark at the root is a.s.\ $\gamma$, then a.s.\ the mark at every vertex of $G$ is also $\gamma$.
\end{lemma}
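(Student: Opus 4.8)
The plan is to apply the mass transport principle to a transport that moves mass from every vertex to witnesses of the event that the mark is \emph{not} $\gamma$. Fix a countable dense subset $\{q_1, q_2, \dots\}$ of $\mathbb{X}$; it suffices to show that for every $n$ and every rational $\delta > 0$, a.s.\ no vertex $v$ satisfies $d_{\mathbb{X}}(m(v), q_n) < \delta$ while $d_{\mathbb{X}}(\gamma, q_n) \geq 2\delta$, since the union of these events (over $n$ and $\delta$) is exactly the event that some vertex carries a mark other than $\gamma$. So fix such $n$ and $\delta$, and call a vertex \emph{bad} if it lies in the corresponding ball but $\gamma$ does not. By hypothesis $\rho$ is a.s.\ not bad.

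The key step is to define the mass transport $f(G, x, y)$ to be $1$ if $y$ is bad and $x$ is a vertex of $G$ at graph-distance at most $\mathrm{dist}_G(x, \{\text{bad vertices}\})$ from $y$ that is closest to $y$ among all such — more simply, let $f(G,x,y) = 1$ if $y$ is bad and $x$ is one of the (finitely many, by local finiteness) vertices at minimal distance from the set of bad vertices that happens to be... Actually the cleanest choice: on the event that $G$ contains at least one bad vertex, let $r(G)$ be the distance from $\rho$ to the nearest bad vertex (finite on this event since $G$ is connected), and send one unit of mass from $\rho$ to each bad vertex at distance exactly $r(G)$ from $\rho$. Formally, set
\[
f(G, x, y) := \mathbf{1}[y \text{ is bad}]\cdot \mathbf{1}\!\left[\mathrm{dist}_G(x,y) = \min_{z \text{ bad}} \mathrm{dist}_G(x, z)\right].
\]
This is a Borel function of $(G,x,y)$. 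The outgoing mass $\sum_y f(G,\rho,y)$ is finite (the minimizing bad vertices all lie in a fixed ball around $\rho$, which is finite) and is $\geq 1$ whenever $G$ has a bad vertex. The incoming mass $\sum_x f(G, x, \rho)$ is $0$ whenever $\rho$ is not bad, which is a.s.\ the case.

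Plugging into \eqref{eq:MTP}: the right-hand side is $\mathbb{E}[\sum_x f(G,x,\rho)] = 0$ since $\rho$ is a.s.\ not bad, so the left-hand side $\mathbb{E}[\sum_y f(G,\rho,y)]$ is also $0$, forcing $\sum_y f(G,\rho,y) = 0$ a.s.; but this sum is $\geq 1$ on the event that $G$ contains a bad vertex, so that event has probability $0$. Taking the union over all $n$ and rational $\delta$ shows that a.s.\ no vertex carries a mark at distance more than $0$ from $\gamma$, i.e.\ every vertex has mark $\gamma$. The only mild subtlety — and the one point needing care — is verifying that $f$ is genuinely Borel on $\mathcal{G}_{**}^{\mathbb{X}}$ (the set of bad vertices, the distance-to-nearest-bad-vertex function, and the indicator of being a minimizer are all determined by finite balls, hence continuous where defined, so this is routine) and that the outgoing sum is a.s.\ finite, which follows from local finiteness since all minimizing bad vertices lie within distance $r(G)$ of $\rho$.
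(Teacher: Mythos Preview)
The paper does not supply its own proof of this lemma; it is quoted verbatim as Lemma~2.3 of Aldous--Lyons~\cite{aldous2007processes} and used as a black box. Your argument is correct and is essentially the standard mass-transport proof from that reference: send mass from each vertex to its nearest ``bad'' vertices, observe that the root a.s.\ receives nothing (since the root is a.s.\ not bad), and conclude via~\eqref{eq:MTP} that the root a.s.\ sends nothing, hence no bad vertex exists.

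One small remark: the preliminary reduction to countably many open balls around a dense set $\{q_n\}$ is more cautious than necessary. Since $\mathbb{X}$ is Polish, the singleton $\{\gamma\}$ is Borel, so you can take ``bad'' to mean simply $m(v)\neq\gamma$ from the outset; the resulting transport $f$ is still Borel on $\mathcal{G}_{**}^{\mathbb{X}}$ (being bad, the distance to the nearest bad vertex, and the minimizer indicator are each countable unions/intersections of events determined by finite balls). Your open-ball decomposition does make the continuity claim in your final paragraph literally true rather than merely Borel-measurable, so it is not wasted effort, just optional.
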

In some sense, unimodularity can be interpreted as saying that each vertex of the graph has the same probability of appearing as the root, and the above lemma seems to support this intuition. Still, this interpretation should not be pushed too far.

Clearly, finite graphs (with a uniformly chosen root) are unimodular. Furthermore, unimodularity is preserved under taking Benjamini-Schramm limits, so any random rooted graph $(G,\rho)$ which arises as a weak limit of finite graphs must be unimodular too. Aldous and Lyons asked whether the converse is true. As mentioned in the introduction, it has recently been shown that the answer to this question is negative~\cite{bowen2024aldous,bowen2024aldous2}.

The next elementary result tells us that unimodularity is preserved under a wide range of measurable transformations that can be applied to the graph.

\begin{lemma}[Lemma 2.2,~\cite{angel2018hyperbolic}]\label{lem:local-changes}
    Let $\mathbb X_1,\mathbb X_2$ be polish spaces. Let $(G,\rho,m)$ be a random rooted $\mathbb X_1$-marked graph and $(G',\rho,m')$ be a random rooted $\mathbb X_2$-marked graph such that $G$ and $G'$ have the same vertex set. Furthermore, assume that for any two vertices $u,v$ of $G$, the conditional distribution of $(G',u,v,m')$ given $(G,\rho,m)$ is a.s.\ given by some measurable function of the (doubly rooted) isomorphism class of $(G,u,v,m)$. Then, $(G',\rho,m')$ is also unimodular.
\end{lemma}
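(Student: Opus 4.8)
The plan is to verify the mass transport principle for $(G',\rho,m')$ directly, by transporting an arbitrary nonnegative Borel test function on $\mathcal{G}_{**}^{\mathbb{X}_2}$ back to a test function on $\mathcal{G}_{**}^{\mathbb{X}_1}$ and then invoking unimodularity of $(G,\rho,m)$. (I read the hypothesis as including that $(G,\rho,m)$ is unimodular --- the word \emph{also} in the conclusion indicates this --- and that the two marked graphs are coupled on a common probability space.) So fix a Borel map $f'\colon\mathcal{G}_{**}^{\mathbb{X}_2}\to\mathbb{R}_{+}$; the goal is to establish
\[
\mathbb{E}\Big[\sum_{o}f'(G',\rho,o,m')\Big]=\mathbb{E}\Big[\sum_{o}f'(G',o,\rho,m')\Big]\,,
\]
the sums ranging over the common vertex set $V(G)=V(G')$.

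First I would name the kernel furnished by the hypothesis. For a doubly rooted $\mathbb{X}_1$-marked graph $(H,u,v,n)$, let $\mu_{(H,u,v,n)}$ be the probability measure on $\mathcal{G}_{**}^{\mathbb{X}_2}$ --- depending, by assumption, in a Borel way only on the doubly rooted isomorphism class of $(H,u,v,n)$ --- such that, conditionally on $(G,\rho,m)$, the object $(G',u,v,m')$ has law $\mu_{(G,u,v,m)}$ for every ordered pair of vertices $u,v$; such regular conditional distributions exist because all the spaces involved are Polish. Now set
\[
f(H,u,v,n):=\int_{\mathcal{G}_{**}^{\mathbb{X}_2}}f'\,d\mu_{(H,u,v,n)}\,.
\]
The routine verification here is that $f$ is a legitimate mass transport on $\mathcal{G}_{**}^{\mathbb{X}_1}$, i.e.\ Borel, isomorphism-invariant and $\mathbb{R}_{+}$-valued: isomorphism-invariance is inherited from that of $\mu$, and Borel measurability follows by composing the Borel map $(H,u,v,n)\mapsto\mu_{(H,u,v,n)}$ with the map $\nu\mapsto\int f'\,d\nu$, which is Borel on the space of probability measures for every nonnegative Borel $f'$ (true for bounded continuous $f'$, then extended by a monotone-class argument and monotone convergence).

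The core of the argument is then short. Applying the hypothesis to the ordered pairs $(\rho,o)$ and $(o,\rho)$ gives, conditionally on $(G,\rho,m)$ and for every vertex $o$,
\[
\mathbb{E}\big[f'(G',\rho,o,m')\mid(G,\rho,m)\big]=f(G,\rho,o,m)\,,\qquad\mathbb{E}\big[f'(G',o,\rho,m')\mid(G,\rho,m)\big]=f(G,o,\rho,m)\,.
\]
Summing these over $o$ --- legitimate term by term since $f'\ge 0$, by the conditional monotone convergence theorem --- and taking expectations, I get $\mathbb{E}[\sum_{o}f'(G',\rho,o,m')]=\mathbb{E}[\sum_{o}f(G,\rho,o,m)]$ and likewise $\mathbb{E}[\sum_{o}f'(G',o,\rho,m')]=\mathbb{E}[\sum_{o}f(G,o,\rho,m)]$. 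Since $(G,\rho,m)$ is unimodular, the marked version of the mass transport principle~\eqref{eq:MTP} applied to the transport $f$ forces the two right-hand sides to be equal; hence the two left-hand sides are equal, which is precisely the mass transport principle for $(G',\rho,m')$. Therefore $(G',\rho,m')$ is unimodular.

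The main --- and essentially the only --- obstacle is the measurability bookkeeping: one has to set up the regular conditional probability $\mu$ correctly and confirm that composing it with $f'$ yields a Borel function on $\mathcal{G}_{**}^{\mathbb{X}_1}$. Everything else is formal, being an application of unimodularity of $(G,\rho,m)$ together with Tonelli / conditional monotone convergence, and it is important that $f'$ is nonnegative so that no integrability issues can arise. It is also worth noting that only the \emph{marginal} conditional laws of each doubly rooted object $(G',u,v,m')$ enter the argument --- no joint description of the whole family is needed --- precisely because the mass transport principle is linear in the test function; this is what makes the hypothesis, as stated, sufficient.
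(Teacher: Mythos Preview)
Your argument is correct and is exactly the standard proof: average the test function $f'$ against the conditional kernel to produce a transport $f$ on $\mathcal{G}_{**}^{\mathbb X_1}$, then apply the mass transport principle for $(G,\rho,m)$. Note, however, that the paper does not supply its own proof of this lemma; it is quoted verbatim from~\cite{angel2018hyperbolic} as a preliminary result, so there is no in-paper argument to compare against. Your proof matches the intended one from that reference.
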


We say that a probability measure $\mathbb P$ on $\mathcal G_*$ is \textit{ergodic} if, for every measurable event $\mathcal A\subseteq \mathcal G_*$ that is invariant under changing the root of the graph, we have that $\mathbb P[\mathcal A]\in\{0,1\}$. A random rooted graph is ergodic if the corresponding probability measure is ergodic. As observed in~\cite{aldous2007processes}, every unimodular random rooted graph is a mixture of ergodic unimodular random rooted graphs, in the sense that it can be sampled from by first selecting an ergodic unimodular measure on rooted graphs according to some distribution, and then sampling a rooted graph from this random measure. It will be helpful to note that soficity is preserved under taking mixtures.
 
\subsection{Markings, colorings, and percolation}\label{subsec:markings}

Given a unimodular random rooted graph $(G,\rho)$ and a polish space $\mathbb X$, an $\mathbb X$\textit{-marking} of $(G,\rho)$ consists of a random function $m:E\cup V\rightarrow \mathbb X$ (possibly defined on a larger probability space) assigning a mark to each vertex and edge of $G$ in such a way that $(G,\rho,m)$ is unimodular. One can also define $\mathbb X$-markings of marked graphs in the same way.

Suppose that we are given a random rooted graph $(G,\rho)$ which we wish to augment with an $\mathbb X$-marking $m$. One way in which we can make sure that $m$ will be a marking, as defined above, is to select the mark of each vertex and edge by means of a \textit{local algorithm}: for each element $u$ of $V\cup E$, the mark assigned to this element is decided by an algorithm which takes as input only the ball of radius $r$ around $u$ plus (possibly) some additional randomness. Here, both $r$ and the local algorithm are fixed (i.e., independent of $u$), and the algorithm must not take the root into account.
There are two particular kinds of markings that will play an important role in our proofs: proper colorings and percolations. We proceed to define them and discuss some of their basic properties.

A marking $m:V\rightarrow [k]$ of a unimodular random rooted graph $(G,\rho)$ is said to be a \textit{proper} $k$\textit{-coloring} if any two adjacent vertices of $G$ receive different colors.
A landmark result from descriptive combinatorics\cite{kechris1999borel}---which is usually stated in terms of Borel measurable graphs, as defined in Section~\ref{sec:transitive}---tells us that every random rooted graph with degrees bounded from above by $D$ admits a proper $(D+1)$-coloring (note that the analogous statement for finite graphs is a simple exercise).

Given a unimodular random rooted graph $(G,\rho)$, a \textit{percolation} of $(G,\rho)$ is simply a $\{0,1\}$-marking $\omega$. We further assume that if $e=(u,v)\in E$ satisfies $\omega(e)=1$, then $\omega(u)=\omega(v)=1$. An element $x\in V\cup E$ is said to be \textit{open} (with respect to $\omega$) if $\omega(x)=1$, and it is said to be \textit{closed} if $\omega(x)=0$. We can think of a percolation as an encoding of a random subgraph of $G$ (which consists of all those edges and vertices which are open), and we sometimes refer to closed vertices and edges simply as having been \textit{deleted} or \textit{removed}. Let $G[\omega]$ denote the subgraph of $G$ consisting of all the open vertices and edges and, for every open vertex $v$, define the \textit{cluster} $K_\omega(v)$ of $v$ to be the connected component of $G[\omega]$ that contains $v$. A percolation $\omega$ is a \textit{site percolation} if $\omega (e)=1$ for every $e\in E$ such that both of its endpoints are open. By the \textit{rate} of a percolation $\omega$, we mean $\mathbb P_{(G,\rho),\omega}[\omega(\rho)=0]$; one should think of this quantity as the fraction of vertices which are being deleted. By a slight abuse of notation, for a site percolation $\omega$ we will often use $\omega$ also to denote the set of open vertices. If $\mu$ is the law of $(G,\rho)$, then we denote by $\mu_\omega$ the conditional law of $(K_\omega(\rho),\rho)$ given that $\omega(\rho)=1$; note that this is well defined as soon as $\mathbb P[\omega(\rho)=1]>0$. All of these definitions extend naturally to marked graphs. Most percolations considered in this text will be site percolations.

The following lemma states that percolations preserve unimodularity.
\begin{lemma}[Lemma 3.1,~\cite{angel2018hyperbolic}]
    Let $\mu$ be the law of a unimodular random rooted graph. Then, for any site percolation $\omega$ with positive rate of this random rooted graph, $\mu_\omega$ is also unimodular.
\end{lemma}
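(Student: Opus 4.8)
The plan is to deduce the mass transport principle for $\mu_\omega$ directly from the mass transport principle for the marked graph $(G,\rho,\omega)$, which is unimodular by the very definition of a percolation. Fix a mass transport $f:\mathcal G_{**}\rightarrow\mathbb R_+$; assuming $\mathbb P[\omega(\rho)=1]>0$ (so that $\mu_\omega$ is defined), we must verify \eqref{eq:MTP} for a sample from $\mu_\omega$, i.e.\ that
\[
\mathbb E_{\mu_\omega}\Big[\sum_{o}f(K_\omega(\rho),\rho,o)\Big]=\mathbb E_{\mu_\omega}\Big[\sum_{o}f(K_\omega(\rho),o,\rho)\Big],
\]
where both sums range over the vertex set of the cluster $K_\omega(\rho)$.

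First I would lift $f$ to a mass transport $F$ on $\{0,1\}$-marked doubly rooted graphs by setting
\[
F(G,u,v,\omega):=\mathbf 1[\omega(u)=1]\,\mathbf 1[v\in K_\omega(u)]\,f(K_\omega(u),u,v),
\]
with the convention that $F=0$ when $\omega(u)=0$, and with $(K_\omega(u),u,v)$ read as an element of $\mathcal G_{**}$. On the event $\{\omega(u)=1,\ v\in K_\omega(u)\}$ one automatically has $\omega(v)=1$ and $K_\omega(v)=K_\omega(u)$, since clusters consist of open vertices and ``lies in the same open cluster as'' is a symmetric relation. The isomorphism-invariance of $F$ is inherited from that of $f$; the one genuinely technical point is that $F$ is Borel, which reduces to the measurability of the map sending $(G,u,v,\omega)$ to the isomorphism class of $(K_\omega(u),u,v)$. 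This holds because $K_\omega(u)$ is produced by a measurable exploration of $(G,u,\omega)$ and the local topology on $\mathcal G_{**}$ is generated by the radius-$r$ neighborhoods, each of which is determined by a finite portion of that exploration.

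Applying \eqref{eq:MTP} to the unimodular marked graph $(G,\rho,\omega)$ with the transport $F$ yields $\mathbb E[\sum_{o}F(G,\rho,o,\omega)]=\mathbb E[\sum_{o}F(G,o,\rho,\omega)]$. On the left, a term is nonzero only on $\{\omega(\rho)=1\}$ and only for $o\in K_\omega(\rho)$, so the left side equals $\mathbb E\big[\mathbf 1[\omega(\rho)=1]\sum_{o\in K_\omega(\rho)}f(K_\omega(\rho),\rho,o)\big]$. On the right, $F(G,o,\rho,\omega)\neq 0$ requires $\omega(o)=1$ and $\rho\in K_\omega(o)$, which by the symmetry noted above is exactly the event $\{\omega(\rho)=1,\ o\in K_\omega(\rho)\}$, on which $K_\omega(o)=K_\omega(\rho)$; hence the right side equals $\mathbb E\big[\mathbf 1[\omega(\rho)=1]\sum_{o\in K_\omega(\rho)}f(K_\omega(\rho),o,\rho)\big]$. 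Since $\mu_\omega$ is by definition the conditional law of $(K_\omega(\rho),\rho)$ given $\{\omega(\rho)=1\}$, dividing both sides of the resulting identity by $\mathbb P[\omega(\rho)=1]>0$ gives precisely the mass transport principle for $\mu_\omega$, so $\mu_\omega$ is unimodular.

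I expect the only real obstacle to be the measurability bookkeeping for $F$, together with the attendant care that clusters may be infinite (so $(K_\omega(u),u)$ must be treated as an element of $\mathcal G_*$, not of a space of finite graphs); the rest is a short formal manipulation of the transport identity. As an alternative one could appeal to Lemma~\ref{lem:local-changes} combined with a re-rooting argument, essentially the route taken in~\cite{angel2018hyperbolic}, but the direct computation above is self-contained.
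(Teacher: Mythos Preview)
The paper does not supply its own proof of this lemma; it is quoted verbatim as Lemma~3.1 from~\cite{angel2018hyperbolic} and used as a black box. Your argument---lifting a transport $f$ on $\mathcal G_{**}$ to a transport $F$ on $\{0,1\}$-marked doubly rooted graphs by restricting to the open cluster of the first root, applying the mass transport principle for $(G,\rho,\omega)$, and then conditioning on $\{\omega(\rho)=1\}$---is the standard and correct proof, and is essentially the one given in the cited source. One terminological quibble: in this paper the \emph{rate} of a percolation is defined as $\mathbb P[\omega(\rho)=0]$, so ``positive rate'' literally reads as $\mathbb P[\omega(\rho)=0]>0$; what is actually needed (and what you correctly assume) is $\mathbb P[\omega(\rho)=1]>0$, so that $\mu_\omega$ is well defined.
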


We will need one more lemma from~\cite{angel2018hyperbolic}, which allows us to combine multiple markings into a single one. 
\begin{lemma}[Lemma 3.2,~\cite{angel2018hyperbolic}]\label{lem:product_markings}
    Let $(G,\rho)$ be unimodular random rooted graph, and $\{m_i\}_{i\in I}$ be a countable collection of markings of $(G,\rho)$ with marks in the spaces $\{\mathbb X_i\}_{i\in I}$. Then, there exists a $\prod_{i\in I}\mathbb X_i$-marking $m$ of $(G,\rho)$ so that $(G,\rho,\pi\circ m)$ and $(G,\rho,m_i)$ have the same distribution for all $i\in I$, were $\pi_i$ is the projection of $\prod_{i\in I}\mathbb X_i$ onto $\mathbb X_i$.
\end{lemma}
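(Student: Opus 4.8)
The plan is to prove Lemma~\ref{lem:product_markings} by reducing it to the special case where one already has a single marking that encodes the entire isomorphism type of the rooted graph, and then reading each $m_i$ off from that master marking. Concretely, first I would enlarge the mark space to $\mathbb X_i' := \mathbb X_i \times [0,1]$ and replace each marking $m_i$ by $m_i'$, where the $[0,1]$-coordinate is an i.i.d.\ (over vertices and edges, and over $i$) uniform label sampled independently of everything else; this clearly remains a marking by Lemma~\ref{lem:local-changes}, since sampling i.i.d.\ uniforms is a local operation, and it is harmless because projecting away the extra coordinate recovers the original $m_i$. The point of the i.i.d.\ labels is that, after adding them, the marked graph $(G,\rho,m_i')$ almost surely has no non-trivial automorphisms, so the multiset of marks is a lossless encoding of the marked isomorphism class.

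The heart of the argument is then a single common refinement. I would work on the product probability space carrying all the randomness used to build every $m_i$, augmented by one further i.i.d.\ $\mathrm{Unif}[0,1]$ label $\xi_x$ at each $x \in V \cup E$. Define the master mark of $x$ to be the triple consisting of $\xi_x$ together with, for each $i \in I$, the value $m_i'(x)$; since $I$ is countable, this lands in a Polish space $\mathbb Y := [0,1] \times \prod_{i \in I} \mathbb X_i'$. Call this marking $\widehat m$; it is a $\mathbb Y$-marking because each coordinate process is a marking and joint measurability is immediate. Now from $(G,\rho,\widehat m)$ one can, by a measurable function of the doubly-rooted isomorphism class alone (indeed of the singly-rooted class), reconstruct each $m_i$: the $\mathbb X_i'$-coordinate of $\widehat m$ literally is $m_i'$, and its $\mathbb X_i$-component is $m_i$. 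So $(G,\rho,\pi_i \circ \widehat m)$ is pointwise equal to $(G,\rho,m_i)$, hence has the same law; and $(G,\rho,\widehat m)$ is unimodular by Lemma~\ref{lem:local-changes}, applied with the identity on the vertex set and the (deterministic, identity) measurable map taking $(G,u,v,\widehat m)$ to itself. Setting $m := \widehat m$ (with the understanding that we retain only the $\prod_i \mathbb X_i$-coordinates, discarding the auxiliary $[0,1]$'s, which again only costs a projection) gives the claimed marking.

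The only real subtlety — and the step I expect to require the most care — is verifying that gluing countably many markings onto the \emph{same} graph is legitimate, i.e.\ that there is a single probability space on which all the $m_i$ are simultaneously realized with their prescribed conditional laws given $(G,\rho)$. This is where the i.i.d.\ uniform labels $\xi_x$ do the work: a marking of $(G,\rho)$ is, by definition, a random function whose law given the isomorphism class of $(G,\rho)$ is a measurable assignment of probability kernels; one builds the joint coupling by first sampling $(G,\rho)$, then sampling an i.i.d.\ $\mathrm{Unif}[0,1]$ field over $V\cup E$ (which is itself a marking, being produced by a local algorithm with no inputs), and then realizing each $m_i$ as a measurable function of $(G,\rho)$, that field, and — if needed — an extra independent $[0,1]$ seed, via the standard fact that any probability kernel into a Polish space can be pushed forward from $\mathrm{Unif}[0,1]$. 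Unimodularity of the whole package then follows once, from Lemma~\ref{lem:local-changes}, rather than having to be re-checked for each $m_i$. Everything else — measurability of the triple-valued mark map, and the observation that projections of unimodular marked graphs are unimodular — is routine and I would dispatch it in a sentence.
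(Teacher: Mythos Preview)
The paper does not prove this lemma; it is quoted from \cite{angel2018hyperbolic} and used as a black box, so there is no in-paper argument to compare against.

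Your proposal follows the standard outline --- disintegrate each law of $(G,\rho,m_i)$ over the law of $(G,\rho)$ and couple the resulting kernels conditionally independently --- and this is essentially how the cited source proceeds. Two comments on the execution. First, the preliminary step of thickening each $m_i$ to $m_i'=(m_i,\text{i.i.d.\ uniform})$ is redundant once you introduce the single global field $\xi$; one i.i.d.\ layer suffices to rigidify the graph. Second, your final appeal to Lemma~\ref{lem:local-changes} ``applied with the identity on the vertex set and the (deterministic, identity) measurable map'' is vacuous as written; the real content is earlier, where you want to argue that the \emph{construction} of $\widehat m$ from $(G,\rho,\xi)$ satisfies the hypothesis of that lemma. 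This is exactly the step you correctly identify as delicate: the disintegration kernel lands in $\mathcal P(\mathcal G_*^{\mathbb X_i})$, not in probability measures on functions $V\cup E\to\mathbb X_i$, and lifting through the automorphism quotient is where $\xi$ is actually used. Once that lift is made explicit, unimodularity is most cleanly verified directly via the mass transport principle (for transports depending on finitely many coordinates, then by monotone approximation), rather than by a second pass through Lemma~\ref{lem:local-changes}.
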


Next, we show that taking a percolation of small rate on a unimodular random rooted graph of bounded degree will have a small effect on the random rooted graph (in the Benjamini-Schramm sense).

\begin{proposition}\label{prop:percolation-stability}
    Let $D$ be a positive integer, and $(G,\rho)$ a unimodular random rooted graph with law $\mu$ such the degree of any vertex in $G$ is at most $D$. Suppose that $\{\omega_n\}_{n\geq 1}$ is a sequence of site percolations of $(G,\rho)$ whose rates are going to $0$ as $n\rightarrow \infty$. Then, the sequence $\{\mu_{\omega_n}\}_{n\geq1}$ converges to $\mu$ in the local topology.

\end{proposition}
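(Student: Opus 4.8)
The plan is to verify the convergence directly in terms of neighbourhood statistics. Recall (as discussed in Section~\ref{subsec:unimodular-sofic}) that $\mu_{\omega_n}\to\mu$ in the local topology precisely when, for every $r\in\mathbb N$ and every finite rooted graph $F$, we have $\mu_{\omega_n}[B(\rho,r)\cong F]\to\mu[B(\rho,r)\cong F]$. Since $\mu_{\omega_n}$ is by definition the conditional law of $(K_{\omega_n}(\rho),\rho)$ given $\{\omega_n(\rho)=1\}$, the quantity $\mu_{\omega_n}[B(\rho,r)\cong F]$ measures the chance that the $r$-ball around $\rho$ inside the cluster $K_{\omega_n}(\rho)$ is isomorphic to $F$, and the task is to show that, as $n\to\infty$, this ball agrees with the $r$-ball around $\rho$ in $G$ with probability tending to one (after accounting for the conditioning, which is a negligible correction).

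The deterministic heart of the argument is the following observation, which I would record first: if $\omega$ is a site percolation of a graph $G$ and every vertex of $B_G(\rho,r)$ is open, then $B_{K_\omega(\rho)}(\rho,r)\cong B_G(\rho,r)$ as rooted graphs. Indeed, for any vertex $v$ with $d_G(\rho,v)\le r$, every vertex on a geodesic from $\rho$ to $v$ in $G$ lies in $B_G(\rho,r)$ and is therefore open, so this geodesic survives in $G[\omega]$ (its edges are open because $\omega$ is a site percolation); hence $d_{G[\omega]}(\rho,v)=d_G(\rho,v)$, which shows that the two balls have the same vertex set, and since $\omega$ is a site percolation they induce the same subgraph. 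Consequently the neighbourhood statistics of $\mu_{\omega_n}$ can deviate from those of $\mu$ only on the event that $\omega_n$ closes some vertex of $B_G(\rho,r)$.

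To bound the probability of that event I would use the mass transport principle for the unimodular marked graph $(G,\rho,\omega_n)$, which is unimodular because $\omega_n$ is a marking. Applying~\eqref{eq:MTP} (in its marked-graph form) to the transport sending $(G,x,y,m)$ to $\mathbf 1[d_G(x,y)\le r]\,\mathbf 1[m(y)=0]$ shows that the expected number of closed vertices inside $B_G(\rho,r)$ equals $\mathbb E\big[\mathbf 1[\omega_n(\rho)=0]\cdot|B_G(\rho,r)|\big]$, which by the degree bound is at most $N_r\cdot p_n$, where $p_n:=\mathbb P[\omega_n(\rho)=0]$ is the rate of $\omega_n$ and $N_r:=1+D+\dots+D^r$ depends only on $D$ and $r$. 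Since $p_n\to 0$, Markov's inequality gives that $B_G(\rho,r)$ contains a closed vertex with probability at most $N_r p_n\to 0$.

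It remains to assemble the pieces. Writing $A_n$ for the event that every vertex of $B_G(\rho,r)$ is open, we have $\mathbb P[A_n^c]\le N_r p_n$; on $A_n$ the root is open and $B_{K_{\omega_n}(\rho)}(\rho,r)\cong B_G(\rho,r)$, so $\mathbb P[\omega_n(\rho)=1,\ B_{K_{\omega_n}(\rho)}(\rho,r)\cong F]$ differs from $\mathbb P[B_G(\rho,r)\cong F]=\mu[B(\rho,r)\cong F]$ by at most $2\mathbb P[A_n^c]$. Dividing by $\mathbb P[\omega_n(\rho)=1]=1-p_n$ to pass to the conditional measure $\mu_{\omega_n}$ introduces only a further error of order $p_n$, so letting $n\to\infty$ yields $\mu_{\omega_n}[B(\rho,r)\cong F]\to\mu[B(\rho,r)\cong F]$ for every $r$ and every $F$, which is exactly the claim. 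I do not anticipate a real obstacle; the only steps that call for a little care are the bookkeeping around the conditioning on $\{\omega_n(\rho)=1\}$ and checking that the mass transport principle is being invoked correctly for the marked graph $(G,\rho,\omega_n)$, together with the (easy) fact that for a site percolation the rooted graph $B_{K_\omega(\rho)}(\rho,r)$ depends only on the open/closed states of the vertices of $B_G(\rho,r)$.
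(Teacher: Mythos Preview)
Your proof is correct and takes a genuinely different route from the paper's. The paper first passes to a subsequence with summable rates, couples all the percolations into a single $\prod_n\{0,1\}$-marking via Lemma~\ref{lem:product_markings}, applies Borel--Cantelli to see that the root is closed only finitely often, and then upgrades this to every vertex via Lemma~\ref{lem:everything_root}; the conclusion is read off from the almost-sure statement that $B_G(\rho,r)$ is eventually entirely open along this subsequence. Your argument instead works one percolation at a time: a single application of the mass transport principle together with the degree bound gives $\mathbb P[B_G(\rho,r)\text{ contains an $\omega_n$-closed vertex}]\le N_r\,p_n$, and you conclude directly from this quantitative estimate. Your approach is more elementary---no subsequence extraction, no coupling of the markings, no Borel--Cantelli---and yields an explicit rate of convergence in terms of $p_n$; the paper's approach gives the slightly stronger almost-sure eventual coincidence of the balls along a subsequence, but that extra information is not used anywhere. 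One small remark: the characterisation of local convergence via $\mu_{\omega_n}[B(\rho,r)\cong F]\to\mu[B(\rho,r)\cong F]$ is not spelled out in Section~\ref{subsec:unimodular-sofic}, but it is standard (the events $\{B(\rho,r)\cong F\}$ are clopen and generate the Borel $\sigma$-algebra, and the degree bound gives compactness), so this is not a gap.
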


\begin{proof}
For starters, we may pass to a subsequence $\{\omega_{i(n)}\}_{n\geq 1}$ such that the rate of $\omega_{i(n)}$ is no more than $2^{-n}$. By Lemma~\ref{lem:product_markings}, there exists a $\prod_{n\geq1}\{0,1\}$-marking $m$ of $(G,\rho)$ which encodes all percolations in this sequence. Since $\sum_{n=1}^{\infty}\mathbb P[w_{i(n)}(\rho)=0]< \infty$, the Borel–Cantelli lemma tells us that a.s.\ $\rho$ is closed with respect to only finitely many percolations of the form $\omega_{i(n)}$. By Lemma~\ref{lem:everything_root}, we get that this must also be true a.s.\ for every vertex of $G$. This allows us to define an $\mathbb N$-marking $m'$ of $(G,\rho,m)$ by letting \[m'(v):=\max\{n\in \mathbb N\mid \omega_{i(n)}(v)=0\}\] for every vertex $v$ of $G$.

Now, for every positive integer $r$, consider the random variable $m'_r$ defined as \[m'_{r}:=\max\{m(v)\mid v\in B_G(\rho,r)\}\,.\] For every $\varepsilon>0$, there exists some positive integer $N(\varepsilon)$ so that $\mathbb P[m'_r\geq N(\varepsilon)]\leq \varepsilon$. In other words, with probability at least $1-\varepsilon$, all the vertices inside $B_G(\rho,r)$ are open with respect to $\omega_{i(n)}$ for all $n\geq N(\varepsilon)$. Letting $\varepsilon\rightarrow 0$, and then $r\rightarrow\infty$, the result follows.
\end{proof}

In particular, Proposition~\ref{prop:percolation-stability} tells us that in order to prove that some unimodular random rooted graph $(G,\rho)$ with law $\mu$ and bounded maximum degree is sofic, it suffices to show that there exists a sequence $\{\omega_n\}_{n\geq 1}$ of percolations of $(G,\rho)$ whose rates are tending to $0$ and such that $\mu_{\omega_n}$ is sofic for each $n$.

A random rooted graph $(G,\rho)$ is said to be \textit{strongly sofic} if for any polish space $\mathbb X$ and any $\mathbb X$-marking $m$ of $(G,\rho)$, $(G,\rho,m)$ is sofic as a marked random rooted graph. Strong soficity clearly implies soficity (although the currently available tools do not allow us to discard the possibility that the two properties are actually equivalent).

\subsection{Fundamentals of structural graph theory}\label{subsec:structural}

Here, we recall the notions of graph minors, graph subdivisions, tree decompositions, and walls.

Given two graphs $G=(V,E)$ and $H=(V_H,E_H)$, we say that say that $H$ is a \textit{minor} of $G$, and write $H\preceq G$, if there exists a family $\{U_v\}_{v\in V_H}$ of pairwise disjoint subsets of $V$ such that $G[U_v]$ is connected for every $v\in V_H$ and, for every edge $(u,v)\in E_H$, there is an edge of $G$ between $U_{u}$ and $U_v$.\footnote{Furthermore, we say that $H$ is an $r$\textit{-shallow minor} of $G$ if the $U_v$'s can be chosen so that $G[U_v]$ has radius at most $r$ for every $v\in V_H$.} In other words, $H$ is a minor of $G$ if it can be obtained from $G$ by a sequence of edge contractions, vertex deletions, and edge deletions. Whenever these properties hold, we will say that the sets $\{U_v\}_{v\in V(H)}$ \textit{act as witnesses} to the fact that $H\preceq G.$ A graph $G$ is said to be $H$\textit{-minor-free} if it does not have $H$ as a minor. A graph is \textit{minor-excluded} if it is $H$-minor-free for some finite graph $H$. Similarly, a class of graphs $\mathcal C$ is \textit{minor-excluded} if there exists a finite graph $H$ such that every graph in $\mathcal C$ is $H$-minor-free. 

The process of \textit{subdividing} an edge of a graph consists of erasing the edge and reconnecting its endpoints by means of a path. By a \textit{subdivision} of a graph $H$, we mean any graph that can be obtained from $H$ by subdividing some of its edges. 

The importance of the study of graph minors and subdivisions is hard to overstate, and these notions have played a central role in many developments, both within graph theory and computer science. Graph minors were first studied due to the role they play in the characterization of planar graphs: Wagner~\cite{wagner1937eigenschaft} showed that a graph is planar if and only if it has neither the complete graph $K_{5}$ nor the complete bipartite graph $K_{3,3}$ as a minor. It was later shown that graphs which can be embedded in any fixed surface are characterized by a finite family of excluded minors~\cite{archdeacon1989kuratowski,robertson1990graph}.

We move on to discuss tree decompositions. A \textit{tree decomposition} of a graph $G=(V,E)$ is a pair $(T,\mathcal V)$, where $T$ is a tree with vertex set $V(T)$ and $\mathcal V$ is a family $\{V_t\}_{t\in V(T)}$ of subsets of $V$, called \textit{bags}, indexed by the set of vertices of $T$, such that the following properties hold:

\begin{enumerate}
    \item $V=\cup_{t\in V(T)}V_t$;

    \item for every edge $e=(u,v)$ of $G$, there exists some $t\in V(T)$ with $u,v\in V_t$;

    \item if $t_1,t_2,t_3\in V(T)$ are such that $t_2$ belongs to the path connecting $t_1$ to $t_3$ in $T$, then $V_{t_2}\supseteq V_{t_1}\cap V_{t_3}.$
\end{enumerate}

The \textit{width} of a tree decomposition $(T,\mathcal V)$ is defined as $\max_{t\in V(T)}|V_t|-1$, and the \textit{treewidth} of $G$ is the least width among all tree decompositions of $G$. Tree decompositions and treewidth play an important role in understanding the structure of minor-excluded graphs (see, for example,~\cite{robertson2003graph}). The \textit{torsos} of a tree decomposition $(T,\mathcal V)$ are the graphs $G_t$ (where $t\in V(T)$) obtained by adding to the induced subgraph $G[V_t]$ an edge between any two vertices $u,v$ such that $u,v\in V_t\cap V_{t'}$ for some neighbor $t'$ of $t$ in $T$. The sets $V_{t,t'}:=V_t\cap V_{t'}$, where $(t,t')$ ranges over all edges of $T$, are called the \textit{adhesion sets} of the tree decomposition. The \textit{adhesion} of $(T,\mathcal V)$ is the supremum of the sizes of the adhesion sets over all edges of $T$. 

For each two positive integers $k$ and $\ell$, the $k\times \ell$\textit{-grid} $G_{k\times \ell}$ is the graph that arises as the Cartesian product of a path with $k$ vertices and a path with $\ell$ vertices. The $k\times \ell$\textit{-wall} $W_{k\times \ell}$ is the skeleton of a $k\times \ell$ hexagonal grid on the plane. The $7\times 7$-grid and the $7\times 7$-wall are depicted in Figure~\ref{fig:grid-wall}.

\begin{figure}[ht]
    \centering
    \includegraphics[width=0.6\linewidth]{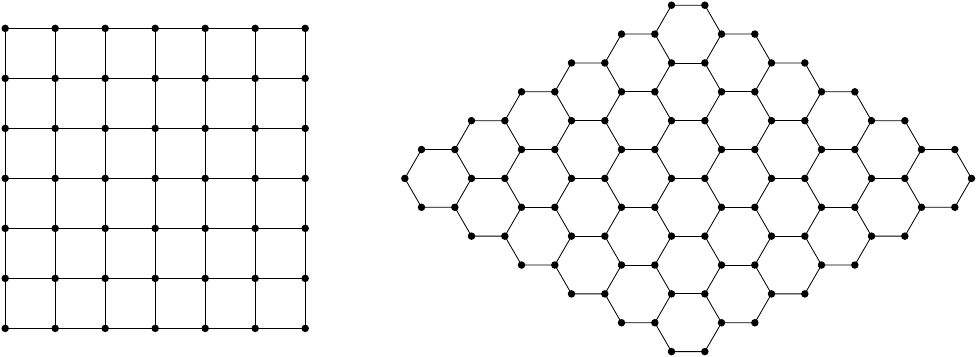}
    \caption{On the left, the grid $G_{7\times 7}$. On the right, the wall $W_{7\times 7}$.}
    \label{fig:grid-wall}
\end{figure}

The following result of Robertson and Seymour is one of the cornerstones of structural graph theory, and will also play a crucial role in our proofs.

\begin{theorem}[Grid-Minor Theorem~\cite{robertson1986graph}]\label{thm:treewidth-walls}
    For each positive integer $t$, there exists an integer $g(t)$ such that every graph of treewidth at least $t$ contains $G_{g(t)\times g(t)}$ as a minor, and contains a subgraph isomorphic to some subdivision of $W_{g(t)\times g(t)}$.
\end{theorem}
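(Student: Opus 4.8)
The plan is to establish the grid-minor half of the statement first and then deduce the wall-subdivision half from it, exploiting that $W_{k\times k}$ has maximum degree $3$. For the reduction, note that $G_{m\times m}$ has $W_{k\times k}$ as a minor whenever, say, $k\le m/3$ (a wall is obtained from a rectangular grid by deleting alternate ``vertical'' edges and suppressing the resulting degree-$2$ vertices), and that a graph $H$ with $\Delta(H)\le 3$ is a minor of $G$ if and only if $G$ contains a subdivision of $H$ as a subgraph: each branch set of a minor model can be pruned to the minimal subtree spanning its at most three attachment vertices, such a tree has at most one vertex of degree $3$, and this vertex---or, when the degree is at most $2$, an endpoint of the tree---can serve as the corresponding branch vertex of the subdivision. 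Hence it suffices to show that every (finite) graph of treewidth at least $t$ has $G_{m(t)\times m(t)}$ as a minor for some $m(t)\to\infty$, and then take $g(t):=\lfloor m(t)/3\rfloor$.

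\textbf{Step 1: from treewidth to a bramble.} I would invoke the treewidth--bramble duality of Seymour and Thomas: a graph has treewidth at least $t$ if and only if it admits a bramble of order greater than $t$, a \emph{bramble} being a family of mutually touching connected subgraphs whose \emph{order} is the least size of a vertex set meeting all of them. Thus from $\mathrm{tw}(G)\ge t$ we extract a bramble $\mathcal B$ of order $>t$. (One could equally well work with havens, tangles, or directly extract a well-linked set of $\Omega(t)$ vertices; these are interchangeable here, and the direction we need---large treewidth forces a large bramble---is the nontrivial half of the duality.)

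\textbf{Step 2: from the bramble to a grid minor.} This is the technical core. Given a bramble of order at least $c\,k^{3}$ for an absolute constant $c$---the exact polynomial is immaterial for the statement as quoted---I would build a $G_{k\times k}$-minor out of two interleaved families of pairwise-disjoint connected subgraphs: $k$ ``rows'' and $k$ ``columns'', arranged so that the $i$-th row meets the $j$-th column, in the correct linear order, for all $i,j$. The large order of $\mathcal B$ supplies, via Menger's theorem, the linkages needed to route each newly added row or column across all previously built ones while keeping everything disjoint; one proceeds greedily, tracking the (polynomially bounded) loss of available ``capacity'' after each routing step. Contracting the $2k$ subgraphs together with their $k^{2}$ crossing connectors produces the grid minor.

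\textbf{Main obstacle.} The delicate point is precisely Step 2: forcing the two path families to cross in a genuine grid pattern rather than in an uncontrolled web of intersections, and bounding the capacity loss at each stage tightly enough that a bramble of merely polynomial order suffices. The original Robertson--Seymour argument controls this only at the cost of a tower-type bound on $g(t)$, while obtaining a polynomial bound (Chekuri--Chuzhoy) requires substantially more care at exactly this step. Since our statement asks only for the existence of a finite $g(t)$, one may use the crudest version of the extraction argument and accept whatever (astronomical) bound it yields.
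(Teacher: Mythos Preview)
The paper does not prove this theorem at all: it is stated in Section~\ref{subsec:structural} as a classical preliminary result of Robertson and Seymour, with a bare citation to~\cite{robertson1986graph} and the remark that the polynomial dependence was later obtained in~\cite{chekuri2016polynomial}. It is used as a black box throughout (notably in the proof of Theorem~\ref{thm:locally_treelike} and in Section~\ref{sec:limits}), so there is nothing in the paper to compare your proposal against.

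As for the proposal on its own terms: your reduction of the wall-subdivision half to the grid-minor half is correct and standard---$W_{k\times k}$ is a minor of a modestly larger grid, and the $\Delta\le 3$ argument converting minors to topological minors is exactly right. Your Step~1 (Seymour--Thomas duality) is also the canonical opening move. Step~2 is, as you yourself flag, the entire content of the theorem; what you have written there is not really an argument but a description of the desired output (``$k$ rows and $k$ columns that cross in order''), together with the correct observation that Menger is the engine. The actual Robertson--Seymour proof is considerably more intricate than a greedy routing with polynomial capacity loss: even the existential (non-polynomial) version passes through an intermediate step involving either a large complete-graph minor or a ``mesh'' of many long internally disjoint paths linking two disjoint sets, and the crossing structure is extracted by a delicate inductive argument rather than by a single greedy pass. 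So while your outline points in the right direction, Step~2 as written would not go through without substantial additional ideas; calling it ``the crudest version of the extraction argument'' undersells how much work remains.
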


Clearly, as $t\rightarrow \infty$, we also have that $g\rightarrow \infty$. In fact, in the original proof of this theorem, the function $g$ that was used grew extremely quickly. Several years after the discovery of this result, it was shown in~\cite{chekuri2016polynomial} that the function $g(t)$ in the above theorem can in fact be taken to be a polynomial.

\subsection{Expansion, amenability, and hyperfiniteness}\label{subsec:expansion-amenable}

Given a simple graph $G=(V,E)$ and a set of vertices $K\subset V$, the \textit{inner boundary} of $K$ in $G$ is defined as \[\partial_{V,\operatorname{in}}^GK:=\{u\in K \mid u\ \text{is adjacent to some vertex not in}\ K \}\,,\] and the \textit{outer boundary} of $K$ is  \[\partial_{V,\operatorname{out}}^GK:=\{u\not\in K \mid u\ \text{is adjacent to some vertex in}\ K \}\,.\] Similarly, the \textit{edge boundary} of $K$ in $G$ is \[\partial_{E}^GK:=\{e\in E\mid \text{exactly one vertex of }e\text{ belongs to $K$}\}\,.\] We will omit the superscript $G$ whenever the graph is clear. Whenever $K$ is finite, its \textit{(inner) vertex-expansion} and \textit{edge-expansion} are defined as \[\Phi_V^G(K):=\frac{|\partial_{V,\operatorname{in}}K|}{|K|}\,\quad\text{and}\quad \Phi_E^G(K):=\frac{|\partial_EK|}{|K|}\,,\] respectively. Now, the \textit{(inner) vertex-expansion constant} and \textit{edge-expansion constant} of $G$ are respectively given by \[\Phi_V(G):=\inf\left\{\frac{|\partial_{V,\operatorname{in}}K|}{|K|}\mid \emptyset\neq K\subset V\text{ is finite}\right\}\,,\quad \Phi_E(G):=\inf\left\{\frac{|\partial_EK|}{|K|}\mid \emptyset\neq K\subset V\text{ is finite}\right\}\,.\] A graph is said to be \textit{vertex}(resp. \textit{edge)-amenable} if $\Phi_V(G)=0$ (resp. $\Phi_E(G)=0)$. For graphs of bounded degree, $G$ being vertex-amenable is equivalent to $G$ being edge-amenable.

If $(G,\rho)$ is a unimodular random rooted graph, we say that a percolation $\omega$ of $(G,\rho)$ is \textit{finitary} if every connected component of $G[\omega]$ is finite. A unimodular random rooted graph $(G,\rho)$ is said to be \textit{hyperfinite} if there exists a sequence $\{\omega_n\}_{n\geq 1}$ of finitary site percolations of $(G,\rho)$ such that $\bigcup_{n=1}^\infty\omega_n=G$ and $\omega_n\subseteq \omega_{n+1}$ for all $n$. Every hyperfinite, unimodular random rooted graph is sofic.

\subsection{Ends and accessibility}\label{subsec:ends}

Let $G$ be an infinite connected graph. The \textit{number of ends} of $G$ is the supremum of the number of connected components of the graph $G\backslash X$, as $X$ ranges over all finite sets of vertices. A \textit{ray} of $G$ is a simple path starting at some vertex of $G$ and extending infinitely in one direction. Two rays $\xi_1$ and $\xi_2$ are said to be equivalent if there exists a third ray which contains infinitely many vertices from both $\xi_1$ and $\xi_2$. This defines an equivalence relation on the set of rays, and its equivalence classes are known as \textit{ends}. This is consistent with the definition of the number of ends given at the beginning of the paragraph. The number of ends of infinite graphs is a widely studied parameter (see, for example,~\cite{diestel2003graph} and the references therein). In the unimodular setting, we have the following powerful result, essentially due to Aldous and Lyons~\cite{aldous2007processes} (see also~\cite{angel2016unimodular,angel2018hyperbolic}). 

\begin{theorem}[Theorem 3.7~\cite{angel2018hyperbolic}]\label{thm:ends}
    Let $(G,\rho)$ be a unimodular random rooted graph. Then, the number of ends of $G$ a.s.\ belongs to the set $\{0,1,2,\infty\}$. Moreover, if the number of ends of $G$ is a.s.\ $2$ and $\mathbb E[\operatorname{deg}\rho]\leq \infty$, then $(G,\rho)$ is hyperfinite.
\end{theorem}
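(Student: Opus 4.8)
The plan is to treat the two assertions separately, reducing each to the ergodic case. Since every unimodular random rooted graph is a mixture of ergodic ones and both conclusions are stable under mixtures, we may assume $(G,\rho)$ is ergodic; then the number of ends of $G$, being invariant under re-rooting, is a.s.\ a fixed value $k\in\{0,1,2,\dots\}\cup\{\infty\}$, and the case $k=0$ is trivial since a connected, locally finite graph with no ray is finite. For the first assertion it remains to exclude $3\le k<\infty$.

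In that range I would exploit a canonical nonempty \emph{finite} substructure of $G$: let $Z(G)$ be the union of all inclusion-minimal finite vertex sets $W$ with $G\setminus W$ having exactly $k$ infinite components. One expects the branching between the $k$ ends to be confined to a bounded region, so that $Z(G)$ is finite and nonempty; proving this is a purely structural fact about finitely-ended graphs and is the one real difficulty for this part. Granting it, the mass-transport principle~\eqref{eq:MTP} gives a contradiction. A Voronoi-type transport, in which each vertex of $Z(G)$ shares one unit of mass equally among the vertices that have it as a nearest point of $Z(G)$, sends nothing out of $\rho$ unless $\rho\in Z(G)$ while always receiving exactly $1$, forcing $\mathbb P[\rho\in Z(G)]>0$; the transport $f(G,x,y)=\mathbf 1[x\in Z(G)]$ then pushes infinite mass out of the root with positive probability while only $|Z(G)|$ (a.s.\ finite) flows in, contradicting~\eqref{eq:MTP} --- if $|Z(G)|$ is not integrable one instead runs this through the truncations $\mathbf 1[x\in Z(G)]\,\mathbf 1[d(x,y)\le n]$ and lets $n\to\infty$. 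This is in essence the argument of Aldous and Lyons~\cite{aldous2007processes}.

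For the second assertion, the strategy is to extract from the two-ended geometry a unimodular \emph{linear skeleton}: a partition $V=\bigsqcup_{j\in\mathbb Z}L_j$, recorded as a marking of $(G,\rho)$, in which each $L_j$ is finite and nonempty and every edge of $G$ joins vertices whose indices differ by at most one. The natural candidate is the level-set partition of a horofunction at one of the two ends, $h(v)=\lim_{w\to\xi}\bigl(d(v,w)-d(\rho,w)\bigr)$, which is $1$-Lipschitz, integer-valued and proper; the partition it induces is canonical up to a global shift, the only genuine choice being which of the two ends plays the role of $\xi$ (settled by an auxiliary fair coin), so it is a unimodular marking by the usual factor arguments~\cite{aldous2007processes}. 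Two points require genuine work here and form the technical heart of the proof: showing that the limit defining $h$ exists --- this is where two-endedness is really used, the point being that the graph funnels through finite separators near each end --- and showing that the $L_j$ are finite; the latter is where the hypothesis $\mathbb E[\deg\rho]<\infty$ enters (it rules out, via a mass-transport estimate, a level of infinite size). Alternatively the same skeleton can be built directly from a $\mathbb Z$-indexed family of finite separators supplied by the structure theory of ends.

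Given the skeleton, hyperfiniteness follows by an explicit construction. Fix moduli $M_1\mid M_2\mid\cdots$ with $M_k\to\infty$, and let $\omega_k$ be the site percolation obtained by deleting every level $L_j$ that lies in one uniformly chosen residue class of levels modulo $M_k$, the choices for successive $k$ coupled so that the class picked at stage $k+1$ refines into the class picked at stage $k$. Deleting a level severs the two sides of the skeleton, so each component of $G[\omega_k]$ is a union of fewer than $M_k$ consecutive finite levels, hence finite; the rate of $\omega_k$ equals $1/M_k\to0$; the coupling gives $\omega_k\subseteq\omega_{k+1}$; and for each fixed vertex the events that it is deleted at stage $k$ are decreasing with probabilities $1/M_k\to0$, so a.s.\ the root --- and therefore, by Lemma~\ref{lem:everything_root}, every vertex --- is eventually open, i.e.\ $\bigcup_k\omega_k=G$. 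Since each $\omega_k$ is a measurable function of the unimodular skeleton marking together with independent randomness, $(G,\rho,\omega_k)$ is unimodular, so $(G,\rho)$ is hyperfinite and in particular sofic. The main obstacle is thus the finiteness of the canonical core in the first part and --- more substantially --- the construction of the unimodular linear skeleton in the second; everything downstream of these is routine.
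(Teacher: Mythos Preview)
The paper does not prove this theorem; it is stated in Section~\ref{subsec:ends} as a known preliminary result, attributed to Aldous--Lyons~\cite{aldous2007processes} with the precise formulation taken from~\cite{angel2018hyperbolic}. There is therefore no proof in the paper to compare against, so I comment on your proposal directly.

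The first part contains a genuine error: the set $Z(G)$ you define need not be finite. Take three copies of $\mathbb N$ with their origins joined into a triangle (vertices $(n,i)$ for $n\ge0$, $i\in\{1,2,3\}$, with edges $(n,i)\sim(n+1,i)$ and $(0,i)\sim(0,j)$ for $i\ne j$); this graph has exactly three ends. For every $n\ge0$ and every pair $i\ne j$, the two-element set $\{(n,i),(0,j)\}$ is an inclusion-minimal separator whose removal leaves three infinite components, so $Z(G)=V(G)$. The ``purely structural fact about finitely-ended graphs'' you invoke is thus false as stated, and the mass-transport contradiction cannot be run from this $Z(G)$. The argument in~\cite{aldous2007processes} identifies a different finite invariant substructure, and getting that step right is not a formality.

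For the second part, the difficulties you flag (existence of the Busemann limit, finiteness of the horofunction level sets) are real and, as you say, constitute the substance of the proof; once a unimodular $\mathbb Z$-indexed partition into finite levels exists, your random-moduli construction of the increasing finitary percolations $\omega_k$ is correct and standard. The horofunction route is not the one taken in~\cite{aldous2007processes}, which builds the linear structure directly from families of minimal separators between the two ends, but your approach is plausible if those two points can be established.
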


A finite set of vertices $S$ is said to \textit{separate} two ends $\xi_1$ and $\xi_2$ of an infinite graph $G$ if the removal of $X$ results in at least two connected components, and every time we select a ray $r_1$ in $\xi_1$ and a ray $r_2$ in $\xi_2$, we can pick (infinite) subrays $r_1'$ and $r_2'$ which are completely contained within different such connected components. A graph is said to be \textit{accessible} if there exists some positive integer $k$ such that any two ends of $G$ can be separated by a set of at most $k$ vertices. 

\subsection{Treeability}\label{subsec:treeable}

A unimodular random rooted graph $(G,\rho)$ is said to be \textit{treeable} if there exists a random graph $T$ on the vertex set of $G$ such that $(T,\rho)$ is unimodular and $T$ is a.s.\ a spanning tree of $G$ ($T$ is allowed to be random even after conditioning on a realization of $(G,\rho)$). As we mentioned during the introduction, it was shown by Elek and Lipner~\cite{elek2010sofic} that treeable unimodular random rooted graphs are sofic. 

Proving that a certain random rooted graph is treeable can often be very challenging. Recently, the following result has been obtained independently by Jard{\'o}n-S{\'a}nchez~\cite{jardon2023applications} and Chen, Poulin, Tao, Tserunyan~\cite{chen2023tree}.

\begin{theorem}[Bounded treewidth implies treeability, Theorem 1.4,~\cite{chen2023tree}]\label{thm:treewidth-treeable}
Let $(G,\rho)$ be a unimodular random rooted graph such that $G$ a.s.\ has boumded treewidth. Then, $(G,\rho)$ is treeable.
\end{theorem}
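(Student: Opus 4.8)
The plan is to translate the statement into the language of graphings and then to extract the required treeing from a \emph{canonical} tree decomposition of bounded width. Via the standard correspondence between unimodular random rooted graphs and graphings (see Section~\ref{sec:transitive} or \cite[Ch.~18]{lovaszlimits}), it suffices to show that a measure-preserving Borel graph $\mathcal G$ all of whose components have treewidth at most some fixed $k$ admits a Borel subgraph $F\subseteq\mathcal G$ that is acyclic and has the same connected components as $\mathcal G$ (off a null set). I would fix, once and for all, a Borel linear order $\prec$ on the vertex set; every "canonical choice" below is made with respect to $\prec$, which is what keeps the construction Borel, hence unimodular.

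The structural engine is the existence of a canonical tree decomposition. Every locally finite graph of treewidth at most $k$ admits an $\operatorname{Aut}$-equivariant tree decomposition of width at most $k$ (one may use canonical lean tree decompositions, or a tree decomposition produced by tangle--tree duality); such a decomposition has adhesion at most $k$ and all bags of size at most $k+1$. Because the construction commutes with isomorphisms, it can be carried out Borel-measurably and componentwise on $\mathcal G$, and therefore constitutes a Borel marking. This yields a Borel \emph{decomposition forest} $T$ (a Borel graph that is itself acyclic), together with a Borel assignment $t\mapsto V_t$ of bags of size at most $k+1$ and a Borel family of adhesion sets of size at most $k$, obeying the tree-decomposition axioms inside each component of $\mathcal G$.

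It then remains to assemble $F$ from this data. For each node $t$ of $T$ the torso is a finite graph on at most $k+1$ vertices, so using $\prec$ one can canonically choose, inside each $\mathcal G[V_t]$, a spanning forest whose trace on every adhesion set realises a prescribed ($\prec$-least feasible) "connection pattern"; the key point is that these patterns are forests on at most $k$ labelled vertices, hence finitely many, so they can be forced to \emph{agree} on the adhesion set shared by two adjacent bags. Gluing the local forests along the edges of $T$ produces $F$. One then verifies that $F$ is spanning (every vertex and every edge of $\mathcal G$ lies in some bag, and connectivity propagates across an edge $tt'$ of $T$ since $V_t\cap V_{t'}$ is nonempty inside a connected component) and acyclic (a cycle of $F$ would be confined to the bags along a path of $T$, and the agreement on adhesion sets forbids it). As all choices are $\prec$-canonical and determined by bounded neighbourhoods in $\mathcal G$ and in $T$, $F$ is Borel, so $\mathcal G$ is treeable; by Elek--Lipner \cite{elek2010sofic}, $(G,\rho)$ is then sofic.

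I expect the crux to be the assembly step, specifically the \emph{global} coordination it requires: one must make, simultaneously and Borel-measurably, a compatible family of local spanning structures that glue to a subgraph which is at once spanning and acyclic, and this must be arranged over a possibly infinite decomposition forest that has no canonical root, so a naive top-down recursion is unavailable — the agreement-on-adhesion-sets device is precisely the mechanism meant to replace such a recursion, and checking that it yields spanningness, acyclicity, and Borelness at the same time is where the care lies. A secondary technical point is having the canonical bounded-width tree decomposition available in the locally finite infinite setting with exactly these parameters; if one instead works with a canonical bounded-adhesion decomposition whose torsos are "tangle-free", one can attempt a well-founded recursion on tangle order, but then bag sizes must be controlled to keep that recursion finite.
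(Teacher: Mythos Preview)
The paper does not prove this statement: Theorem~\ref{thm:treewidth-treeable} is quoted as a black box from \cite{chen2023tree} (and independently \cite{jardon2023applications}), with only a remark that those sources phrase it for Borel graphs/graphings and that the translation to unimodular random rooted graphs is standard. There is therefore no proof in the paper to compare your proposal against.

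That said, your sketch is in the right spirit but has a genuine gap at the point you yourself flag. The assertion that every locally finite graph of treewidth $\le k$ carries an $\operatorname{Aut}$-equivariant tree decomposition \emph{of width $\le k$} is not something you can simply invoke; canonical tree decompositions typically come from tangle--tree or separation-based constructions that control adhesion rather than bag size, and squeezing the width back down to $k$ canonically is work. More seriously, the ``agreement on adhesion sets'' device you propose for assembling the spanning forest is underspecified: you want each bag's local forest to realise a prescribed connection pattern on each of its adhesion sets, but a bag may have unboundedly many neighbours in the decomposition tree, and the patterns on different adhesion sets of the same bag interact. Without an orientation or well-founded order on $T$ there is no obvious way to propagate compatible choices, and your paragraph does not explain how a single $\prec$-least choice per bag can be made that is simultaneously consistent across all incident adhesion sets. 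This is exactly where the cited papers do the real work (e.g.\ via a carefully designed recursion or a reduction through the levels equivalence relation), and your outline does not yet supply a mechanism that would substitute for it.
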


This theorem will be another one of the cornerstones of the proof of Theorem~\ref{thm:main}.

\medskip\noindent\textit{Remark.} Both in~\cite{jardon2023applications} and~\cite{chen2023tree}, this result is stated in the language of Borel measurable graphs and graphings. We refer the reader to either Section~\ref{sec:transitive} or~\cite[Chapter 18]{lovaszlimits} for a discussion on the connection between graphings and unimodular random rooted graphs.

\section{Removing sets with poor expansion}\label{sec:expansion}

Fix some finite graph $H$. Let $(G,\rho)$ be a unimodular random rooted graph such that $G$ is one-ended and $H$-minor-free, and let $\mu$ be its law. For every poitive integer $D$, let $\xi_D$ denote the site percolation of $(G,\rho)$ such that, for every vertex $v$, $\xi_D(v)=1$ if and only if $v$ has degree at most $D$ in $G$. We denote $G[\xi_D]$---the subgraph induced by all the open vertices---simply as $G_D$. Clearly, every vertex in $G_D$ has degree at most $D$ and, as $D\rightarrow \infty$, $\mu_{\xi_{D}}$ converges to $\mu$ in the Benjamini-Schramm sense. We remark that the connected components of the graph $G_D$ are not necessarily one-ended. 

The goal of this section is to show that there is exists a percolation of small rate which splits the graph $G_D$ into connected components which are either non-amenable or finite. 

\begin{theorem}\label{thm:non-amenable}
    Let $(G,\rho)$ be a unimodular random rooted graph and $\xi$ be a percolation such that every vertex of the induced subgraph $G_D:=G[\xi]$ has degree at most $D$, where $D$ is some positive integer. Then, for every $\alpha>0$, $(G,\rho)$ admits a site percolation $\omega_\alpha$ of rate at most $\alpha$ such that a.s.\ every infinite connected component of $G_D[\omega_\alpha]$ 
    has inner vertex-expansion constant at least $\alpha$. 
\end{theorem}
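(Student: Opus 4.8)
The plan is to build $\omega_\alpha$ by an iterative peeling procedure: at each stage we locate "witnesses" to non-expansion — finite sets $K$ with $|\partial_{V,\mathrm{in}}^{G_D}K|<\alpha|K|$ that are close to the root in a measurable sense — and delete a carefully chosen sub-collection of their inner boundaries so as to seal these sets off, repeating until no infinite component contains such a witness. The subtlety, and what I expect to be the main obstacle, is ensuring that the sets whose boundaries we delete in a single round are \emph{pairwise non-overlapping} (or overlap in a controlled way), since otherwise the total number of deleted vertices in one round could be an uncontrollably large multiple of $\alpha$ times the number of vertices being cut off. I would handle this by first extracting a \emph{maximal} family of pairwise-disjoint non-expanding sets — using a local tie-breaking rule (an auxiliary i.i.d.\ uniform $[0,1]$ marking) to make the selection measurable and automorphism-equivariant — and then only deleting boundaries of sets in this family.

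More concretely, I would proceed as follows. First, fix an i.i.d.\ $\mathrm{Unif}[0,1]$ mark $U(v)$ at every vertex; all randomized local choices below are measurable functions of $(G_D,U)$ and hence give genuine percolations by Lemma~\ref{lem:local-changes}. Second, for a parameter $m$ (to be sent to infinity), say a finite connected set $K$ with $|K|\le m$ is a \emph{defect} if $\Phi_V^{G_D}(K)<\alpha$. Among all defects, select via the $U$-marks a maximal pairwise-disjoint sub-family $\mathcal K_1$ (e.g.\ greedily in order of the minimum $U$-value on $K\cup\partial_{V,\mathrm{in}}K$, breaking further ties lexicographically on the isomorphism type); this is a local construction. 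Delete $\bigcup_{K\in\mathcal K_1}\partial_{V,\mathrm{in}}^{G_D}K$. The number of vertices deleted is at most $\alpha\sum_{K\in\mathcal K_1}|K|$, and since the $K$'s are disjoint this is at most $\alpha|V(G_D)|$; via the mass transport principle this translates into $\mathbb P[\rho\text{ deleted in round }1]\le\alpha$, but we want the \emph{total} rate over all rounds to be $\le\alpha$, so I would instead run round $n$ with budget $\alpha 2^{-n}$, scaling the expansion threshold appropriately — i.e.\ in round $n$ we only cut defects witnessing expansion $<\alpha$ that moreover satisfy an additional smallness/"first time it becomes cuttable" condition ensuring each vertex is charged at most once; alternatively, and more cleanly, keep threshold $\alpha$ but argue that a vertex is deleted at some round only if it lies in the inner boundary of a defect it is "responsible for", and set up the mass transport so every deleted vertex receives mass $1$ sent by the $\ge 1/\alpha$ vertices of the defect it seals, giving total rate $\le\alpha$ directly.

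Third, I would iterate: after round $n$, pass to the graph $G_D^{(n)}$ obtained by removing all vertices deleted so far, and repeat with defects of size $\le m$ in $G_D^{(n)}$. The key monotonicity point is that once a finite component is split off it never re-enters the process, and that the families $\mathcal K_n$ sealing defects are nested in the appropriate sense, so the percolations accumulate to a single site percolation $\omega_\alpha$ of total rate $\le\alpha$. Finally, one must check the conclusion: if some infinite component $C$ of $G_D[\omega_\alpha]$ had a finite set $K$ with $\Phi_V^{G_D[\omega_\alpha]}(K)<\alpha$, then since $C$ is infinite we may grow $K$ within $C$ (adding boundary vertices, which only decreases expansion in the limit by a standard averaging argument, or directly: some finite $K'\subseteq C$ with $|K'|$ as small as we like relative to $m$ still witnesses expansion $<\alpha$ in $G_D$ because all its boundary in $G_D$ survives in $\omega_\alpha$) — so $K'$ was a defect of size $\le m$ in the relevant $G_D^{(n)}$, contradicting maximality of $\mathcal K_n$. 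Letting $m\to\infty$ removes the size restriction. I expect the bookkeeping in the "charge each deleted vertex once" mass-transport argument and the verification that the limiting components genuinely have no defects (handling the interplay between "defect in $G_D$" and "defect in the sub-percolation") to be where the real care is needed; the disjointness-via-maximality idea is what makes it go through.
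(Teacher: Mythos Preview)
Your overall strategy---iteratively sealing off finite non-expanding sets of bounded size by deleting their inner boundaries, bounding the total rate via mass transport, and then sending the size bound $m\to\infty$---is exactly the paper's approach. The paper's implementation differs from yours in one substantive way: rather than selecting a \emph{maximal} disjoint family of defects (whose measurable construction you leave vague), it assigns fresh i.i.d.\ weights at every round and calls an island ``heavy'' if its average weight beats that of every overlapping island. This yields only an independent family, not a maximal one, so a given defect need not meet any selected island in a given round; the paper compensates by a probabilistic termination argument---each persisting defect has a uniformly positive chance of being heavy at every round, hence is a.s.\ eventually cut off.

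Two places in your sketch need tightening. First, for the total rate to be $\le\alpha$ (not $\le\alpha$ per round) you need the sealed-off defects across \emph{all} rounds to be pairwise disjoint, so that in the mass transport each vertex is charged at most once; the paper enforces this explicitly by excluding, at each round, islands lying inside previously separated finite pieces. Your sketch does not address this. Second, your termination claim that a surviving $K'$ ``still witnesses expansion $<\alpha$ in $G_D$'' is wrong as stated: the inner boundary of $K'$ in $G_D$ can be strictly larger than in $G_D[\omega_\alpha]$. What is true is that the finitely many $G_D$-neighbours of $K'$ that eventually get deleted are all gone by some finite stage $n$, so $K'$ is a defect in $G_D^{(n)}$, and then either your maximality argument or the paper's probabilistic one applies.
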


\begin{proof}
Fix some $\alpha>0$ and consider a positive integer $M$. We will repeatedly get rid of portions of the graph $G_D$ which are connected, contain at most $M$ vertices, and have vertex-expansion no more than $\alpha$. We then let $M$ go to infinity and take the limit of these constructions. Let us say that a set of vertices of a graph is an $(M,\alpha)$-\textit{island} if it is minimal---with respect to the containment relationship---with these two properties (i.e., it has at most $M$ elements and its vertex-expansion is at most $\alpha$). At a high level, our strategy consists of cutting out $(M,\alpha)$-islands by deleting the vertices on their inner boundaries. This requires some care, since we wish to remove disjoint islands at every step, and everything must be done in a measurable way.

We begin by describing a site percolation $\omega_{M,1}$ of $(G,\rho)$. Start by assigning uniformly and independently a random weight from $[0,1]$ to each vertex of $G_D$. For each vertex $v$ of $G_D$, we make use of the following local algorithm that decides whether the vertex should be deleted or not: If $\xi(v)=0$, then $\omega_{M,1}(v)=1$. Otherwise, we look at the ball of radius $3M$ around $v$ in $G_D$, $B_{G_D}(v,3M)$. Create an auxiliary graph $G_{v,\operatorname{isl}}$ whose vertices are the $(M,\alpha)$-island that are completely contained in this ball (i.e., $G_{v,\operatorname{isl}}$ has one vertex for every such set), and where two vertices are adjacent if and only if the two corresponding islands share at least one element. For each $(M,\alpha)$-island $U$ corresponding to a vertex of $G_{v,\operatorname{isl}}$, compute the average weight of its elements, and denote it by $w(U).$ We say that $U$ is \textit{heavy} if $w(U)>w(W)$ for every $W$ that is adjacent to $U$ in $G_{v,\operatorname{isl}}$. Set $\omega_{M,1}(v)=0$ (i.e., delete $v$) if and only if it belongs to $\partial_{V,\operatorname{in}}^{G_D}U$ for some $(M,\varepsilon)$-island $U$ that is heavy.

The point of this construction is that if some $(M,\alpha)$-island $U$ contains $v$ and is heavy when looking at $B_{G_D}(v,3M)$, then it will be heavy when looking at any other ball of radius $3M$ that contains it. This allows us to talk about the family of heavy $(M,\alpha)$-islands without needing to specify the vertex $v$, and it implies that all elements in $\partial_{V,\operatorname{in}}^{G_D}U$ will be deleted, thus cutting off $U$ from the rest of the graph. Furthermore, no other elements in $U$ will be deleted, since that would imply that there is some $(M,\alpha)$-island which is adjacent to $U$ in $G_{v,\operatorname{isl}}$ and has larger labels on average. Thus, after the deletions, we are left with multiple finite residual connected components of order less than $M$, and some (possibly $0$) infinite components. We use $\mathcal I_{M,\alpha}$ to denote the set of heavy $(M,\alpha)$-islands in $G_D$, which is well defined by our discussion above.

Let $\theta:=\mathbb P_{(G,\rho,\xi),\omega_{M,1}}[\omega_{M,1}(\rho)=0]$ denote the rate of the percolation $\omega_{M,1}$. Also, define \[\tau:=\mathbb E_{(G,\rho,\xi),\omega_{M,1}}[\rho\in U\text{ for some }U\in \mathcal I_{M,\alpha}]\,.\] (Note that we are abusing the notation slightly by using the subscript $\omega_{M,1}$ to denote the randomness coming from the weights that were assigned to the vertices.) We claim that $\theta\leq\alpha\tau$. Let us describe a Borel-measurable function $f$ to which we shall apply the mass transport principle~\eqref{eq:MTP}. First, if $\omega_{M,1}(\rho)=1$ then we set $f(G,\rho, o)=0$.\footnote{To be entirely precise, $f$ is also a function of $\xi$ and the $[0,1]$-marking which corresponds to the vertex weights. We ignore this in our notation in order to avoid excessive clutter. Some other imprecisions of this nature will come up during the rest of the paper.} Instead, if $\omega_{M,1}(\rho)=0$, we let $U_\rho$ be the unique element of $\mathcal I_{M,\alpha}$ such that $\rho\in\partial_{V,\operatorname{in}}^{G_D}U$, and then set $f(G,\rho,o)=0$ if $o\not\in U_\rho$, and $f(G,\rho,o)=1/|U_\rho|$ otherwise. On one hand, \[\mathbb E_{(G,\rho,\xi),\omega_{M,1}}\left[\sum_{o\in V(G)}f(G,\rho,o)\right]=\mathbb E_{(G,\rho,\xi),\omega_{M,1}}\left[\mathbf 1[\omega_{M,1}(\rho)=0]\cdot\left(|U_\rho|\cdot\frac{1}{|U_\rho|}\right)\right]=\mathbb P[\omega_{M,1}(\rho)=0]=\theta\,,\] where $\mathbf 1[\omega_{M,1}(\rho)=0]$ denotes the indicator function of the event that $\omega_{M,1}(\rho)=0$. Note that $U_\rho$ is only well defined whenever $\omega_{M,1}(\rho)=0$, but the expression written above makes sense nonetheless, as it takes the value $0$ when $\omega_{M,1}(\rho)=1$. In the other direction, we have \[\mathbb E_{(G,\rho,\xi),\omega_{M,1}}\left[\sum_{o\in V(G)}f(G,o,\rho)\right]\leq\mathbb P_{(G,\rho,\xi),\omega_{M,1}}[\rho\in U\text{ for some }U\in \mathcal I_{M,\alpha}]\cdot\max_{K\in \mathcal I_{M,\alpha}}\frac{|\partial_VK|}{|K|}\leq\alpha\tau\,.\] Hence, $\theta\leq\alpha\tau$, as desired.

The above process is now repeated within the subgraph of $G$ induced by all those vertices which are open with respect to both $\xi$ and $\omega_{M,1}$ and which do not belong to any of the heavy $(M,\alpha)$-islands in $\mathcal I_{M,\alpha}$ (i.e., those $(M,\alpha)$-islands that have already been separated from the rest of the graph), thus obtaining a new percolation $\omega_{M,2}$. We keep going in this way, obtaining a sequence of $\{\omega_{M,n}\}_{n\geq 1}$ of site percolations of $(G,\rho)$. More precisely, when constructing $\omega_{M,n}$ for $n>1$, we consider the $(M,\alpha)$-islands within $G_D[\omega_{M,n-1}]$---only those which are not a subset of one of the heavy $(M,\alpha)$-islands that were separated during previous steps---and draw new i.i.d. $[0,1]$-marks encoding the new weights of the vertices of $G_D[\omega_{M,n-1}]$, which are then used to define the new heavy $(M,\alpha)$-ilsands (all previously assigned weights will be ignored from now on). A vertex $v$ will be closed with respect to $\omega_{M,n}$ if either $\omega_{M,n-1}(v)=0$ or if $v$ belongs to the inner vertex boundary of a heavy $(M,\alpha)$-island considered in this last step of the process. In particular, note that the set of vertices which are open with respect to $\omega_{M,n-1}$ is a subset of the set of vertices which are open with respect to $\omega_{M,n}$ (in other words, $\omega_{M,n-1}\supseteq\omega_{M,n}$). By iterating the argument from the previous paragraph, we see that the rate of each $\omega_{M,n}$ is upper bounded by $\alpha$ times the probability that the root belongs to a heavy $(M,\alpha)$-island from one of the first $n$ iterations of the process. 

Recall that $\omega_{M,n-1}\supseteq\omega_{M,n}$ for all $n\geq 2$ and consider the percolation $\omega_M:=\bigcap_{n=1}^{\infty}{\omega_{M,n}}$ of $(G,\rho)$. That is, a vertex will be closed with respect to $\omega_m$ if and only if it is closed with respect to least one of the percolations $\omega_{M,n}$ with $n\in \mathbb N$. 
The rate of $\omega_M$ is no more than $\alpha$ times the probability that the root belongs to one of the heavy $(M,\alpha)$-islands that were separated from the rest of the graph during the construction of one of the percolations in the sequence $\{\omega_{M,n}\}_{n\geq 1}$. 

We will argue that, a.s., there are no $(M,\alpha)$-islands in any of the infinite connected components of $G_D[\omega_M]$. By Lemma~\ref{lem:everything_root}, it suffices to show that the probability that $\rho$ simultaneously belongs to an $(M,\alpha)$-island and an infinite connected component of $G_D[\omega_M]$ is $0$. Towards this goal, we fix some $G_D$ and $\rho$ and consider a set $U$ of at most $M$ vertices such that $\rho\in U$ and $G_D[U]$ is connected. In order for $U$ to be an $(M,\alpha)$-island in an infinite connected component of $G_D[\omega_M]$, it must be an $(M,\alpha)$-island in an infinite connected component of $G_D[\omega_{M,n}]$ for all sufficiently large values of $n$, and thus it must have been considered during each of these steps. However, for each such island in $G_D[\omega_{M,n}]$, the probability that this island will turn out to be heavy during the construction of $\omega_{M,n+1}$ is bounded from below by some $p=p(D,M,\alpha)>0$ that depends only on $D$, $M$, and $\alpha$. If $M$ is a heavy $(M,\alpha)$-island at this step, then $\rho$ will not belong to an infinite connected component of $G_D[\omega_{M,n+1}]$. Hence, the probability that $U$ is an $(M,\alpha)$-island in an infinite component of $G_D[\omega_{M,n}]$ for at least $k$ distinct values of $n$ tends to $0$ as $k\rightarrow \infty$. The set of candidates for $U$ is finite, as they correspond to induced connected subgraphs of $G_D$ with order at most $M$ which contain $\rho$. Whence, by the union bound, the probability---with respect to the randomness in the construction of $\omega_M$---that $\rho$ belongs to an $(M,\alpha)$-island within an infinite connected component of $G_{D}[\omega_M]$ is $0$. As a consequence, this must also hold over the randomness in the choices of $G$, $\xi$, $\rho$, and $\omega_M$, as desired.

Our next step will be to increase the value of $M$ and repeat the above construction, but this time on the graph $G_{D}[\omega_M]$ instead of $G_D$. This will work in essentially the same way as the construction of the sequence $\{\omega_{M,n}\}_{n\geq 1}$. Suppose we have already constructed the percolation $\omega_{M'-1}$ for some positive integer $M'> M$. Then, we look at the graph $G_D[\omega_{M'-1}]$ and consider all $(M',\alpha)$-islands within it. Again, assign random $[0,1]$ labels to every vertex of this graph and define the heavy islands by looking at balls of radius $3M'$ within $G_D[\omega_{M'-1}]$. A vertex will be closed with respect to $\omega_{M',1}$ if either it is closed with respect to $\omega_{M'-1}$ or it belongs to the boundary of one of these heavy $(M',\alpha)$-islands. We keep going in this way, producing a sequence $\{\omega_{M',n}\}_{n\geq 1}$ of percolations of $(G,\rho)$ with $\omega_{M',n-1}\supseteq\omega_{M',n}$ for all $n$, and then we define $\omega_{M'}:=\bigcap_{n=1}^{\infty}\omega_{M',n}$. A simple inductive argument reveals that the rate of $\omega_{M'}$ is at most $\alpha$ times the probability that $\rho$ belongs to one of the heavy islands that have been separated from $G_D$ thus far. Note that $\omega_{M'-1}\supseteq\omega_{M'}$ for all $M'>M$. 

Finally, we set $\omega_{\alpha}:=\bigcap_{M'=M}^\infty\omega_M'$ and prove that it satisfies the desired properties. For starters, an inductive argument once again shows that the rate of $\omega_\alpha$ is upper bounded by $\alpha$ times the probability that the root belongs to one of the heavy islands that were separated from the graph at any step of the construction. This, in particular, implies that $\mathbb P[\omega_{\alpha}(\rho)=0]\leq \alpha$. Next up, we show that a.s.\ each infinite component of $G_D[\omega_\alpha]$ has vertex-expansion constant at least $\alpha$. If this were not the case, then there would be an infinite connected component of $G_{D}[\omega_{\alpha}]$ which contains a $(k,\alpha)$-island for some positive integer $k$. By our choice of $\omega_\alpha$, any such island must also be a $(k,\alpha)$-island within some infinite connected component of $G_D[\omega_{M'}]$ for all sufficiently large $M'$. Yet, for any $M'\geq k$, we know that a.s\ no infinite connected component of $G_{D}[\omega_\alpha]$ contains a $(k,\alpha)$-island. 
This observation concludes the proof.\end{proof}

Given $(G,\rho)$ and $\xi_D$ as in the beginning of this section and some percolation $\omega_\alpha$ as in the statement of Theorem~\ref{thm:non-amenable}, we write $G_{D,\alpha}:=G_D[\omega_\alpha]$.

\section{Voronoi cells and Filaments}\label{sec:Voronoi and Starfish}

The goal of this section is to make precise the construction of the filaments described in Section~\ref{subsec:filaments}. Our starting point will be the percolated graph $G_{D,\alpha}$. Namely, we prove the following.

\begin{theorem}\label{thm:filaments}
    Consider some $\alpha>0$ and some integer $D\geq 2$. Let $(G,\rho)$ be a unimodular random rooted graph and $\xi$ be a percolation such that the degree of every vertex in $G_{D,\alpha}:=G[\xi]$ is at most $D$, and so that each infinite connected component of $G_{D,\alpha}$ has inner vertex-expansion constant at least $\alpha$. Then, for every $\varepsilon\in(0,1/2)$ and every positive integer $N$, there exist a positive integer $h=h(\varepsilon)$ (which does not depend on $N$) and a site percolation $\omega$ of $(G,\rho)$ of rate at most $\varepsilon$ with (a.s.) the two following properties:
    \begin{enumerate}[label=(\Roman*)]
        \item each vertex of $G_{D,\alpha}[\omega]$ which belongs to an infinite connected component of $G_{D,\alpha}$ is at distance at most $h$ in $G_{D,\alpha}$ from some vertex (also in $G_{D,\alpha}$) which is closed with respect to $\omega$;
        
        \item for every vertex $v$ in $G_{D,\alpha}$ with $\omega(v)=0$, there is some set $\Psi$ of at least $N$ vertices from $G_{D,\alpha}$ which are closed with respect to $\omega$, and such that $G_{G,\alpha}[\Psi]$ is connected.
    \end{enumerate}
\end{theorem}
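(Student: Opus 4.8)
The plan is to realize the informal construction from Section~\ref{subsec:filaments} in a fully measurable (marking-based) way, working component by component. Since the finite connected components of $G_{D,\alpha}$ can be handled trivially (we never close any vertex inside them, so properties (I) and (II) hold vacuously there because (I) only speaks about vertices in infinite components and (II) will never be triggered), I restrict attention to the infinite components, each of which has inner vertex-expansion at least $\alpha$. First I would fix $h=h(\varepsilon)$ as the least positive integer with, say, $2h/(1+\alpha/D)^{\lfloor h/2\rfloor}\le \varepsilon$ (using inner expansion $\alpha$ we only get outer expansion $\alpha/D$, so the geometric growth rate is $(1+\alpha/D)$; this is the only place the $D$ enters and it still lets $h$ be chosen independently of $N$). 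Then I would pick a large integer $R=R(\varepsilon,N,D,\alpha)$ — to be specified so that balls of radius $\lfloor R/2\rfloor$ are forced to contain at least $N\cdot D^{h+1}$ vertices, which is possible since $|B(v,\lfloor R/2\rfloor)|\ge (1+\alpha/D)^{\lfloor R/2\rfloor}$ grows without bound.

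The core construction proceeds in three measurable layers, each an application of a local algorithm plus Lemma~\ref{lem:product_markings} to bundle the auxiliary randomness into one $\mathbb X$-marking. (1) Build a maximal $R$-separated set $S$ inside each infinite component: assign i.i.d.\ $[0,1]$ weights, and greedily/locally select $s$ into $S$ if its weight exceeds that of every vertex within distance $R$ already tentatively selected — more cleanly, one can run the standard measurable maximal-independent-set construction on the $R$-th power graph; this is where care is needed, but it is routine and the probability that $\rho\in S$ is well-defined. (2) Form the Voronoi partition $\{V_s\}$ by assigning each vertex to the nearest element of $S$, breaking ties using a further independent i.i.d.\ $[0,1]$-marking; standard facts give that $G_{D,\alpha}[V_s]$ is finite, connected, contains $B(s,\lfloor R/2\rfloor)$, and is contained in $B(s,R)$. (3) Inside each cell $V_s$, pick a maximal $h$-separated subset $W_s\subseteq V_s$ (again a measurable MIS construction on the $h$-th power of $G_{D,\alpha}[V_s]$), and let $\Psi_s$ be (a measurable choice of) a minimal connected subgraph of $G_{D,\alpha}[V_s]$ containing all of $W_s$ — to avoid boundary issues one may instead take the union of shortest paths from each $w\in W_s$ to a fixed reference vertex of the cell, chosen by weight. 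Finally declare $\omega(v)=0$ iff $v$ lies in some $\Psi_s$; all other vertices (including everything in finite components and everything outside filaments) are open.

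Now I would verify the three required conclusions. \textbf{Rate bound.} By maximality of $W_s$ and degree $\le D$, $|W_s|\ge |V_s|/D^{h+1}$; by disjointness of the balls $B(w,\lfloor h/2\rfloor)$ for $w\in W_s$ and inner expansion $\ge\alpha$ (equivalently outer expansion $\ge\alpha/D$), $|W_s|\le |V_s|/(1+\alpha/D)^{\lfloor h/2\rfloor}$ up to the harmless boundary correction; and a minimal tree/union-of-paths spanning $W_s$ uses at most $2h(|W_s|-1)+1<\varepsilon|V_s|$ vertices by the choice of $h$. A mass-transport argument exactly as in Section~\ref{subsec:filaments} — each $v\in\Psi_s$ sends mass $1/|V_s|$ to every vertex of $V_s$ — converts the per-cell bound $|\Psi_s|\le\varepsilon|V_s|$ into $\mathbb P[\omega(\rho)=0]\le\varepsilon$, using Lemma~\ref{lem:local-changes} to know $(G,\rho,\text{all these marks})$ is unimodular so the MTP applies. \textbf{Property (I).} If $v$ lies in an infinite component and is open, then $v\in V_s$ for some $s$, and by maximality of the $h$-separated set $W_s$ there is some $w\in W_s$ with $d_{G_{D,\alpha}}(v,w)\le h$; since $w\in\Psi_s$ is closed, $v$ is within distance $h$ of a closed vertex. (One must check the shortest path stays inside $G_{D,\alpha}$, which it does since $V_s$-to-$V_s$ distances are realized inside the component.) \textbf{Property (II).} If $\omega(v)=0$ then $v\in\Psi_s$ for some $s$; $\Psi_s$ is connected by construction and $|\Psi_s|\ge|W_s|\ge|V_s|/D^{h+1}\ge|B(s,\lfloor R/2\rfloor)|/D^{h+1}\ge(1+\alpha/D)^{\lfloor R/2\rfloor}/D^{h+1}\ge N$ by the choice of $R$; take $\Psi=\Psi_s$.

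The main obstacle I anticipate is not any single inequality but the bookkeeping needed to make every step genuinely \emph{measurable} and \emph{local} — in particular, constructing the maximal $R$-separated set and the within-cell maximal $h$-separated set as markings (local algorithms with i.i.d.\ labels and bounded-radius information, so that Lemma~\ref{lem:local-changes} and the MTP are applicable), and specifying the filament $\Psi_s$ by a rule that does not secretly depend on the root. A secondary technical nuisance is the ``boundary of $V_s$'' issue flagged parenthetically in the overview: some of the balls $B(w,\lfloor h/2\rfloor)$ used in the upper bound on $|W_s|$, or the minimal tree spanning $W_s$, may poke slightly outside $V_s$, so one either absorbs a bounded multiplicative loss into the definition of $h$, or redefines $\Psi_s$ via shortest paths to a cell-center to sidestep it entirely; either fix is routine but must be stated carefully. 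Everything else — the geometric-growth estimates, the counting with maximal separated sets, and the mass-transport computation — is a direct transcription of the heuristic already given in Section~\ref{subsec:filaments}.
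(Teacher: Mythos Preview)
Your proposal follows the informal sketch of Section~\ref{subsec:filaments} faithfully, and the mass-transport computation, Property~(I), and Property~(II) are all handled correctly \emph{given} the per-cell density bound $|\Psi_s|\le\varepsilon|V_s|$. The gap is precisely in establishing that bound: the step you call a ``harmless boundary correction'' is not harmless, and neither of your suggested fixes works. For the inequality $|W_s|\le|V_s|/(1+\alpha/D)^{\lfloor h/2\rfloor}$ you need the disjoint balls $B(w,\lfloor h/2\rfloor)$, $w\in W_s$, to lie inside $V_s$; since $W_s$ is chosen cell by cell, elements near $\partial V_s$ have balls that escape. Absorbing the loss into $h$ fails because the $\lfloor h/2\rfloor$-neighbourhood of $V_s$ can exceed $|V_s|$ by a factor of order $(1+D)^{h/2}$, which dominates the gain $(1+\alpha/D)^{h/2}$. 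Redefining $\Psi_s$ as a union of shortest paths to the cell centre gives paths of length up to $R$, not $h$, so the bound $|\Psi_s|\lesssim h|W_s|$ is lost and $h$ would have to depend on $R$ (hence on $N$), violating the statement. A parallel issue afflicts the tree-size bound $|\Psi_s|\le 2h(|W_s|-1)+1$ itself: if $W_s$ is $h$-separated and maximal in the ambient metric of $G_{D,\alpha}$, consecutive elements need not be joinable by paths of length $\le 2h$ \emph{inside} $V_s$; if instead both are taken in the intrinsic metric of $G_{D,\alpha}[V_s]$, then intrinsic balls of radius $h/2$ need not have size $(1+\alpha/D)^{h/2}$, since the finite graph $G_{D,\alpha}[V_s]$ does not inherit the expansion.

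The paper resolves both issues simultaneously by a \emph{two-scale} Voronoi construction. First it builds a \emph{global} maximal $h$-separated set $S_1$ with small Voronoi cells $V_{1,s'}$; Claim~\ref{claim:connected_Voronoi} gives $B(s',h/2)\subseteq V_{1,s'}\subseteq B(s',h)$. Then it selects an $R$-separated subset $S_2\subseteq S_1$ (in the auxiliary graph on $S_1$ where two centres are adjacent when their small cells touch) and defines each macro-cell $V_{2,s}$ as the \emph{union of the small cells} whose centres are assigned to $s$. The filament $\Psi_s$ is a minimal tree in $G_{D,\alpha}[V_{2,s}]$ spanning $W_s:=S_1\cap V_{2,s}$. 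Because $S_1$ is globally $h$-separated and each $V_{1,s'}\subseteq V_{2,s}$ already contains $B(s',h/2)$, the disjoint-ball count now takes place entirely inside $V_{2,s}$ with no boundary loss; and because each small cell lies in a ball of radius $h$ around its centre, any two adjacent centres in $W_s$ are joined by a path of length at most $2h+1$ inside $V_{2,s}$, which is exactly what legitimises $|\Psi_s|\le 2h|W_s|$. The rest of your outline then matches the paper verbatim.
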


The site percolation $\omega$ produced by the above theorem corresponds to the \textit{filaments} described in sections~\ref{subsec:about-proofs} and~\ref{subsec:filaments}. One should think of the set $\Psi$ in property (II) above as being the filament containing the vertex $v$.

Before proving Theorem~\ref{thm:filaments}, we pave the way by setting up a couple of Voronoi partitions of $G_{D,\alpha}$ at different scales, which will be encoded using some markings. The following standard lemma shall allow us to choose appropriate sets of vertices to act as the centers of the regions in these partitions. We will mark with the symbol $\star$ every vertex which is to be a center, and the mark $\bullet $ will be assigned to all other vertices.

\begin{lemma}\label{lem:maximal_k-independent_set}
    Let $(G,\rho)$ be a unimodular random rooted graph admitting a percolation $\xi$ such that the degree of every vertex in $G[\xi]$ is uniformly bounded from above by a constant. Then, for every positive integer $k$, there exists a $\{\bullet, \star\}$-marking $m:V\rightarrow \{\bullet, \star\}$ of $(G,\rho)$ such that, a.s., any two vertices marked with $\star$ are at distance greater than $k$, and any vertex of $G[\xi]$ marked with $\bullet$ is at distance at most $k$ in $G[\xi]$ from some vertex marked with $\star$ (which also belongs to $G[\xi]$).
\end{lemma}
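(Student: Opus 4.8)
The plan is to construct the marking via a local algorithm driven by i.i.d. weights, which automatically guarantees unimodularity by the remarks preceding the lemma (a marking produced by a local algorithm is a genuine $\mathbb X$-marking). First I would attach to every vertex $v$ an independent uniform weight $w(v)\in[0,1]$. I then want to define the set of $\star$-vertices to be a maximal $k$-separated subset of $G[\xi]$ (maximality with respect to inclusion, among subsets in which any two vertices are at distance $>k$ in $G[\xi]$). The standard trick to extract such a set locally from the weights is a greedy/Moser-Tardos-style rule: declare $v$ to be a $\star$-vertex if and only if $\xi(v)=1$ and $w(v)<w(u)$ for every $u\in V(G[\xi])$ with $0<\mathrm{dist}_{G[\xi]}(v,u)\le k$ that is \emph{itself not excluded by some lower-weight vertex closer to it}. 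Making this precise requires a short inductive definition on the ranking of weights within the ball $B_{G[\xi]}(v,k)$; this is where a little care is needed, but it is entirely standard once one observes that $G[\xi]$ has bounded degree (so these balls are finite with a uniformly bounded number of vertices) and that a.s. all weights in such a ball are distinct.

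The key steps, in order, would be: (1) Reduce to $G[\xi]$: vertices outside $G[\xi]$ get mark $\bullet$ and play no role, so it suffices to mark $G[\xi]$, which is a bounded-degree (random rooted) graph obtained from $(G,\rho)$ by a percolation, hence unimodular. (2) Define, for each vertex $v\in G[\xi]$, the status $\star$ or $\bullet$ by the local rule above, which depends only on $B_{G[\xi]}(v,k)$ together with the restriction of the weights to this ball; since this rule does not reference the root, it is a local algorithm and thus yields a marking $m$ with $(G,\rho,(\xi,w,m))$ unimodular, and projecting to the $\{\bullet,\star\}$-coordinate gives the desired marking of $(G,\rho)$. (3) Verify the two a.s. properties. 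That $\star$-vertices are pairwise at distance $>k$ is immediate from the defining rule (of two $\star$-vertices within distance $k$, the one of larger weight would have been excluded). For maximality — i.e., every $\bullet$-vertex of $G[\xi]$ is within distance $k$ in $G[\xi]$ of some $\star$-vertex — I would argue by contradiction: if $v\in G[\xi]$ has no $\star$-vertex in $B_{G[\xi]}(v,k)$, take the vertex $u$ of minimum weight in $B_{G[\xi]}(v,k)\cap V(G[\xi])$ (this ball is finite, nonempty, with a.s. distinct weights); by minimality $u$ is not excluded by any competitor, so the rule forces $u$ to be a $\star$-vertex, a contradiction.

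The main obstacle I anticipate is purely bookkeeping: phrasing the local rule so that it is genuinely a function of a bounded-radius ball and simultaneously produces an inclusion-\emph{maximal} $k$-separated set, rather than merely some $k$-separated set. A naïve rule (``$v$ is $\star$ iff $w(v)$ is smallest in $B_{G[\xi]}(v,k)$'') gives a $k$-separated set but need not be maximal; conversely a truly greedy process seems to need unbounded look-ahead. The resolution — observing that a vertex either becomes $\star$ or is excluded by a strictly lower-weight vertex within distance $k$, so one only ever needs to resolve a finite poset of vertices inside $B_{G[\xi]}(v,k)$ — is routine but must be spelled out; the bounded-degree hypothesis (inherited from $\xi$) is exactly what makes these balls finite and the algorithm well-defined. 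Everything else (unimodularity of $m$, the distance bounds) then follows from the cited lemmas and an easy extremal-vertex argument. Finally, although the statement is phrased for a general percolation $\xi$ with $G[\xi]$ of bounded degree, no ergodicity or one-endedness is needed here, so the proof is self-contained given the preliminaries.
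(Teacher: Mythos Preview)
Your overall strategy (i.i.d.\ weights plus a greedy selection) is sound, but the specific locality claim that carries the whole argument is false, and your maximality argument has a separate error.

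\textbf{The locality gap.} You assert that the status of $v$ ``depends only on $B_{G[\xi]}(v,k)$ together with the restriction of the weights to this ball''. It does not. In the recursive rule ``$v$ is $\star$ iff no lower-weight vertex within distance $k$ is itself $\star$'', resolving the status of some $u\in B_{G[\xi]}(v,k)$ requires looking at the lower-weight vertices in $B_{G[\xi]}(u,k)$, which may lie outside $B_{G[\xi]}(v,k)$; and so on along a chain of decreasing weights. Concretely, on a path $v_0v_1v_2\cdots$ with $k=1$ and $w(v_0)>w(v_1)>\cdots>w(v_j)$, deciding whether $v_0$ is $\star$ forces you to look out to $v_j$. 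Such chains are a.s.\ finite but have no uniform length bound, so the rule is not a bounded-radius local algorithm in the sense invoked just before the lemma.

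\textbf{The maximality gap.} Your contradiction argument takes $u$ of minimum weight in $B_{G[\xi]}(v,k)$ and claims ``by minimality $u$ is not excluded by any competitor''. But $u$'s competitors live in $B_{G[\xi]}(u,k)$, not in $B_{G[\xi]}(v,k)$; a $\star$-vertex $u'$ with $d(u,u')\le k$, $d(v,u')>k$, $w(u')<w(u)$ excludes $u$ without contradicting anything. (For the \emph{global} greedy rule, maximality is immediate from the definition itself: if $v$ is not $\star$ then by construction some $\star$-vertex of lower weight lies within distance $k$.)

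\textbf{How to repair it, and what the paper does.} One clean fix is to keep the global greedy rule, prove it is a.s.\ well-defined (in a bounded-degree graph with i.i.d.\ weights, Borel--Cantelli rules out infinite decreasing chains with consecutive steps at distance $\le k$), and then invoke Lemma~\ref{lem:local-changes} directly: unimodularity is preserved by any measurable rule that ignores the root, not only by bounded-radius ones. The paper instead avoids weights entirely: it passes to the $k$-th power graph $G^k$ on $G[\xi]$ (still bounded degree, say $D'$), invokes the Kechris--Solecki--Todorcevic result to obtain a proper $(D'+1)$-coloring as a marking, and then runs $D'$ rounds of a genuinely radius-$1$ recoloring (in $G^k$) that greedily pushes vertices into color class $1$. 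After these finitely many bounded-radius steps, color class $1$ is a maximal independent set in $G^k$, i.e., a maximal $k$-separated set in $G[\xi]$. This sidesteps precisely the unbounded-chain issue that breaks your argument.
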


\begin{proof}
Consider $(G,\rho)$ and $\xi$ as in the statement of the theorem and suppose that every vertex in $G[\xi]$ has degree at most $D$. Let $G^k$ be the graph obtained from $G$ by adding an edge between any two vertices of $G[\xi]$ which lie at distance at most $k$ within $G[\xi]$ and are not already adjacent. By Proposition~\ref{lem:local-changes}, $(G^k,\rho)$ is again unimodular. Moreover, the degree of every vertex in the induced subgraph $G^k[\xi]$ is bounded from above by \[1+D+D(D-1)+\dots+D(D-1)^{R-1}=:D'\,.\] The next step is to show that there exists a marking encoding a maximal independent set of vertices in $G^k[\xi]$.  

By the results in~\cite{kechris1999borel} (see also Section~\ref{subsec:markings} and~\cite[Theorem 18.3]{lovaszlimits}), there exists a $[D'+1]$-marking $c$ of $(G^k,\rho)$ which behaves as a proper $(D'+1)$-coloring when restricted to $G^k[\xi]$. We modify this coloring $c$ in a series of steps. Starting with $i=2$ and then going through $i=3,4,\dots,D'+1$, we do the following: simultaneously for every vertex $v$ of $G[\xi]$ with $c(v)=i$, if no neighbor of $v$ in $G^k[\xi]$ has color $1$, then we change the color of $v$ to be $1$ (that is, we set $c(v)=1$); otherwise, we leave $c(v)$ unchanged. It is not difficult to see that after each step of this process we will have obtained a new $[D'+1]$-marking which is still a proper $(D'+1)$-coloring when restricted to $G^{k}[\xi]$. Moreover, at the end of the process, every vertex which has not been assigned color $1$ must be adjacent (in $G^k[\xi]$) to some vertex of color $1$. In other words, the set of vertices of color $1$  constitutes a maximal independent set in $G^k[\xi]$. Hence, any two vertices of color $1$ in $G[\xi]$ are at distance at least $k$ in $G[\xi]$, and any vertex of $G[\xi]$ which is not of color $1$ is at distance at most $k$ in $G[\xi]$ from some vertex of color $1$. Now, we can simply define $m$ by letting
\begin{align*}
    m(v) :=
    \begin{cases*}
      \star & if $c(v)=1$,\\
      \bullet        & otherwise.
    \end{cases*}
  \end{align*}

\end{proof}

Fix some $\varepsilon>0$ and let $h=h(\varepsilon)$ be an even positive integer which is large enough that \[2h(1+\alpha D^{-1})^{-h/2}\leq \varepsilon\,.\] Consider a unimodular graph $(G,\rho)$ and a percolation $\xi$ as in the statement of Theorem~\ref{thm:filaments}. Apply Lemma~\ref{lem:maximal_k-independent_set} with parameter $k=h$ to obtain an $\{\bullet,\star\}$-marking $m_1$, and let $S_1$ denote the set of vertices of $G_{D,\alpha}$ which are marked with $\star$. Let $d_{D,\alpha}$ denote the shortest path metric in $G_{D,\alpha}$. Assign random labels in $[0,1/2]$ uniformly and independently to all elements of $S_1$ and let $\ell_1(s)$ be the label of $s$ for each $s\in S_1$. These labels will serve two purposes: they will act as the identifiers of the elements of $S$, and they will allow us to break ties in an organized manner when constructing Voronoi cells. Now, for every vertex $v$ of $G_{D,\alpha}$, let \[\mathcal V_1(v):=\arg\min_{s\in S_1}\{d_{D,\alpha}(v,s)+\ell_1(s)\}\,,\] which is well defined a.s.. Note that, by the properties of $m_1$, the function $\mathcal V_1(v)$ can be computed by means of a local algorithm. Indeed, $v$ must be at distance at most $h$ from some element in $S_1$, so it suffices to look at $B_{G_{D,\alpha}}(v,h)$ when computing $\mathcal V_1(v)$. For each $s\in S_1$, the set of vertices $v$ of $G_{D,\alpha}$ such that $\mathcal V_1(v)=s$ is the \textit{Voronoi cell} of $s$, and will be denoted by $V_{1,s}$. We consider a marking $m_{\ell_1}$ which assigns to each vertex $v$ of $G_{D,\alpha}$ the mark $\ell_1(\mathcal V_1(v))$ (and marks the rest of vertices with, say, $-1$). Since the labels of the vertices in $S_1$ are pairwise distinct with probability $1$, the Voronoi cells can a.s.\ be read directly from the marking $m_{\ell_1}$. The point of defining $m_{\ell_1}$ is simply to make it clear that the Voronoi partition can be encoded by means of a marking.

\begin{claim}\label{claim:connected_Voronoi}
    For each $s\in S_1$, the induced subgraph $G_{D,\alpha}[V_{1,s}]$ is connected. Moreover, this subgraph contains the ball $B_{G_{D,\alpha}}(s,h/2)$ and is contained in the ball $B_{G_{D,\alpha}}(s,h)$.
\end{claim}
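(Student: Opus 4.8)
The plan is to dispatch the two ball inclusions first, since they are purely metric statements, and then bootstrap to connectedness. For $V_{1,s}\subseteq B_{G_{D,\alpha}}(s,h)$: by the properties of the marking $m_1$ furnished by Lemma~\ref{lem:maximal_k-independent_set}, every vertex of $G_{D,\alpha}$ lies within distance $h$ (in $G_{D,\alpha}$) of some center $s'\in S_1$ in its own connected component. If $v\in V_{1,s}$, then comparing $s$ with such an $s'$ in the minimization defining $\mathcal V_1(v)$ gives $d_{D,\alpha}(v,s)+\ell_1(s)\le d_{D,\alpha}(v,s')+\ell_1(s')\le h+\tfrac12$; since $\ell_1(s)\ge 0$ and graph distances are integers, this forces $d_{D,\alpha}(v,s)\le h$. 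For $B_{G_{D,\alpha}}(s,h/2)\subseteq V_{1,s}$: suppose $d_{D,\alpha}(v,s)\le h/2$ but $\mathcal V_1(v)=s'$ for some $s'\ne s$. Then $d_{D,\alpha}(v,s')+\ell_1(s')\le d_{D,\alpha}(v,s)+\ell_1(s)\le h/2+\tfrac12$, and since $h$ is even and $\ell_1(s')\ge 0$, integrality again yields $d_{D,\alpha}(v,s')\le h/2$, so $d_{D,\alpha}(s,s')\le h$, contradicting the fact that distinct elements of $S_1$ are at distance greater than $h$. A by-product is that each center lies in its own cell and that the cells are pairwise disjoint.

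For connectedness I would show that each cell is ``geodesically convex towards its center.'' Given $v\in V_{1,s}$, fix a geodesic $v=v_0,v_1,\dots,v_k=s$ in $G_{D,\alpha}$, so $k=d_{D,\alpha}(v,s)$ and $d_{D,\alpha}(v_i,s)=k-i$ for each $i$. For any $s'\in S_1$ the triangle inequality gives $d_{D,\alpha}(v_i,s')\ge d_{D,\alpha}(v,s')-i$, and combining this with the inequality $d_{D,\alpha}(v,s')+\ell_1(s')\ge d_{D,\alpha}(v,s)+\ell_1(s)$ witnessing $v\in V_{1,s}$ yields $d_{D,\alpha}(v_i,s')+\ell_1(s')\ge d_{D,\alpha}(v_i,s)+\ell_1(s)$. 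Hence $s$ attains the minimum defining $\mathcal V_1(v_i)$, and since that minimum is attained uniquely almost surely (see below), $\mathcal V_1(v_i)=s$, i.e.\ $v_i\in V_{1,s}$. Thus every vertex of $V_{1,s}$ is joined to $s$ by a path lying in $V_{1,s}$, so $G_{D,\alpha}[V_{1,s}]$ is connected.

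The only genuinely delicate point — and the one that makes the almost-sure qualifier indispensable — is the uniqueness of the argmin: without it the cells are not even unambiguously defined, and the geodesic argument could fail at an intermediate vertex $v_i$ where $s$ merely ties for the minimum. This is handled cleanly because $d_{D,\alpha}$ is integer-valued while the perturbations $\ell_1$ take values in $[0,1/2]$ and are a.s.\ pairwise distinct on $S_1$, so any equality $d_{D,\alpha}(v_i,s')+\ell_1(s')=d_{D,\alpha}(v_i,s'')+\ell_1(s'')$ with $s'\ne s''$ would force both $d_{D,\alpha}(v_i,s')=d_{D,\alpha}(v_i,s'')$ and $\ell_1(s')=\ell_1(s'')$, an event of probability zero. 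Everything else is elementary manipulation of the shortest-path metric; note that neither unimodularity nor the expansion hypothesis is needed for this claim.
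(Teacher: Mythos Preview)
Your proof is correct and takes essentially the same approach as the paper: you establish the two ball inclusions via the metric properties of the $\star$-marked set from Lemma~\ref{lem:maximal_k-independent_set} together with integrality of distances, and you prove connectedness by showing that any geodesic from a cell vertex to its center stays inside the cell. The paper argues connectedness by contradiction (picking the last off-cell vertex on a shortest path) whereas you give the direct inequality $d_{D,\alpha}(v_i,s')+\ell_1(s')\ge d_{D,\alpha}(v_i,s)+\ell_1(s)$, but the content is identical; your explicit discussion of why the argmin is almost surely unique is a welcome clarification that the paper leaves implicit.
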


\medskip\noindent\textit{Remarks.}    This is a standard fact about Voronoi cells with well-separated centers, but we include the proof here since it is very simple.
 In this statement, we are implicitly assuming that $h$ is even. However, the claim still holds for any $h$ after substituting $h/2$ for $\lfloor h/2\rfloor$.

\begin{proof}
    In order to prove the connectedness, it suffices to show that for every vertex $v\in V_{1,s}$, every path $P$ of minimum length in $G_{D,\alpha}$ which connects $s$ to $v$ must be fully contained in $G_{D,\alpha}[V_{1,s}]$. Choose any such $P$ and number its vertices as $s=u_0,u_1,\dots,u_d=v$, in the order that they appear when traversing the path from $s$ to $v$. Suppose, for the sake of contraction, that there is some $i$ with $0\leq i<d$ such that $u_i\not\in V_s$, and assume that $i$ is actually the largest index with these properties. Then, there exists some $s'\in S_1$ with $s'\neq s$ such that \[d_{D,\alpha}(u_i,s')+\ell_1 (s')<d_{D,\alpha}(u_i,s)+\ell_1(s)=i+\ell_1(s)\,.\] Moreover, the vertex $u_{i+1}$ must belong to $V_{1,s}$ so, in particular, we have \[d_{D,\alpha}(u_{i+1},s')+\ell_1 (s')>d_{D,\alpha}(u_{i+1},s)+\ell_1(s)=i+1+\ell_1(s)\,.\] However, since $u_i$ and $u_{i+1}$ are adjacent in $G_{D,\alpha}$, $d_{D,\alpha}(u_{i+1},s')\leq d_{D,\alpha}(u_i,s')+1$. These three inequalities cannot hold simultaneously, so $P$ must be fully contained in $V_{1,s}$.

    Now, we show that any vertex $v\in B_{G_{D,\alpha}}(s,h/2)$ must also belong to $V_{1,s}$. Again, we proceed by contradiction. Assume that $v\in V_{1,s'}$ for some $s'\in S_1$ with $s'\neq s$. Then, we know that \[d_{D,\alpha}(u,s')+\ell_1 (s')<d_{D,\alpha}(u,s)+\ell_1(s)\leq h/2+1/2\,.\] From this, we can actually deduce that $d_{D,\alpha}(u,s')\leq h/2$, as $d_{D,\alpha}(u,s')$ is an integer and $h$ is even. Hence, by the triangle inequality, \[d_{D,\alpha}(s,s')\leq d_{D,\alpha}(s,u)+d_{D,\alpha}(u,s')\leq h/2+h/2=h\,.\] This contradicts the fact---guaranteed by Lemma~\ref{lem:maximal_k-independent_set}---that any two elements of $S_1$ are at distance at greater than $h$.

    Lastly, we show that $V_{1,s}$ is contained in $B_{G_{D,\alpha}}(s,h)$. For any vertex $v$ of $G_{D,\alpha}$, there exists some $s'\in S_1$ such that $d_{D,\alpha}(v,s')\leq h$. If $v\not\in B_{G_{D,\alpha}}(s,h)$, then $d_{D,\alpha}(v,s)\geq h+1$, and so $s'\neq s$. This also immediately implies that $s$ cannot belong to $V_{1,s}$.
\end{proof}

The next step consists of taking a Voronoi partition at a larger scale $R$, where $R$ is a positive integer to be specified later ($R$ shall depend on $\varepsilon$ and $N$, but for now one can simply think of it as being extremely large in comparison to $h$). More precisely, we begin by considering the auxiliary graph $G_{S_1}$, which is obtained by starting from $G$ and adding an edge between any two distinct elements $s$ and $s'$ of $S_1$ such that there is an edge in $G_{D,\alpha}$ with one endpoint in $V_{1,s}$ and the other in $V_{1,s'}$. By Lemma~\ref{lem:local-changes}, $(G_{S_1},\rho)$ is unimodular too. Clearly, every element of $S_1$ still has bounded degree in $G_{S_1}$. Consider now the percolation $\xi'$ of $(G,\rho)$ such that $\xi'(v)=1$ if and only if $\xi'\in S_1$ (equivalently, $\xi'(v)=1$ if and only if $m_1(v)=\star$). Applying Lemma~\ref{lem:maximal_k-independent_set} to $(G_{S_1},\rho)$ and $\xi'$ with parameter $R$, we obtain a new $\{\star,\bullet\}$-marking $m_2$.  Let $S_2\subseteq S_1$ denote the set of vertices in $S_1$ to which $m_2$ has assigned the mark $\star$. Note that any two elements of $S_2$ must be at distance at least $R+1$ not only in $G_{S_1}$, but also in $G_{D,\alpha}$. Similarly to what we did above, we let $\ell_2$ be a function assigning random, independent, uniform labels in $[0,1/2]$ to all elements of $S_2$, and we write $d_{S_1}$ to denote the shortest path metric on the graph $G_{S_1}$. Now, for every $v\in S_1$, we define\[\mathcal V_2(v):=\arg \min_{s\in S_2}\{d_{S_1}(v,s)+\ell_2(s)\}\,.\] Once again, $\mathcal V_2$ can be computed by a local algorithm. For every $s\in S_2$, the \textit{Voronoi macro-cell} of $s$, denoted by $V_{2,s}$, will consists of all those vertices $v$ in $G_{D,\alpha}$ such that $\mathcal V_2(\mathcal V_1(v))=s$. In other words, $V_{2,s}$ is the union of the Voronoi cells corresponding to the elements of $S_1$ which are mapped to $s$ under $\mathcal V_2$. Consider the marking $m_{\ell_2}$ which assigns to each vertex $v$ in $S_1$ the mark $\ell_2(\mathcal V_2(v))$, and to all other vertices $-1$. Then, the Voronoi macro-cells can a.s.\ be read directly from the markings $m_{\ell_1}$ and $m_{\ell_2}$.

By slightly modifying the strategy in the proof of Claim~\ref{claim:connected_Voronoi}, we obtain the following.

\begin{claim}\label{claim:connected_Voronoi_2}
    For each $s\in S_2$, the induced subgraph $G_{D,\alpha}[V_{2,s}]$ is connected. Moreover, this subgraph contains the ball $B_{G_{D,\alpha}}(s,\lfloor R/2\rfloor-h)$.
\end{claim}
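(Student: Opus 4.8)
My plan is to mirror the argument used in the proof of Claim~\ref{claim:connected_Voronoi}, but carried out at the macro-scale. The key point is that the macro-cells $V_{2,s}$ are unions of micro-cells $V_{1,s'}$, and membership of $v$ in $V_{2,s}$ is governed entirely by which element of $S_2$ is closest (in the $d_{S_1}$-metric, with $\ell_2$-perturbation) to $\mathcal{V}_1(v)\in S_1$. So the "vertices" of the relevant Voronoi problem are the elements of $S_1$, living in the graph $G_{S_1}$, and each such vertex $s'\in S_1$ carries along with it the whole micro-cell $V_{1,s'}$, which by Claim~\ref{claim:connected_Voronoi} is connected in $G_{D,\alpha}$.

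\emph{Connectedness.} First I would establish the analogue of the path lemma: for every $s'\in S_1$ with $\mathcal{V}_2(s')=s$, and every shortest path $s=t_0,t_1,\dots,t_d=s'$ in $G_{S_1}$ (with each $t_i\in S_1$), all the $t_i$ satisfy $\mathcal{V}_2(t_i)=s$. The proof is verbatim the one in Claim~\ref{claim:connected_Voronoi}: if $t_i$ were the largest index with $\mathcal{V}_2(t_i)\neq s$, there would be some $s''\in S_2$, $s''\neq s$, with $d_{S_1}(t_i,s'')+\ell_2(s'')<d_{S_1}(t_i,s)+\ell_2(s)$, while $d_{S_1}(t_{i+1},s'')+\ell_2(s'')>d_{S_1}(t_{i+1},s)+\ell_2(s)$; combined with $|d_{S_1}(t_i,s'')-d_{S_1}(t_{i+1},s'')|\le 1$ and $|d_{S_1}(t_i,s)-d_{S_1}(t_{i+1},s)|\le 1$ this is a contradiction. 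This shows that the set $\{s'\in S_1 : \mathcal{V}_2(s')=s\}$ induces a connected subgraph of $G_{S_1}$. Now I would translate this back to $G_{D,\alpha}$: by construction of $G_{S_1}$, an edge between $s'$ and $s''$ in $G_{S_1}$ corresponds to an edge of $G_{D,\alpha}$ with one endpoint in $V_{1,s'}$ and one in $V_{1,s''}$. Since each $G_{D,\alpha}[V_{1,s'}]$ is connected (Claim~\ref{claim:connected_Voronoi}) and $V_{2,s}=\bigcup_{\mathcal{V}_2(s')=s}V_{1,s'}$ with the index set connected in $G_{S_1}$, gluing the connected pieces $G_{D,\alpha}[V_{1,s'}]$ along these cross-edges yields that $G_{D,\alpha}[V_{2,s}]$ is connected.

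\emph{Containment of the ball.} For the second assertion, take any vertex $v$ with $d_{D,\alpha}(v,s)\le \lfloor R/2\rfloor - h$; I must show $v\in V_{2,s}$, i.e.\ $\mathcal{V}_2(\mathcal{V}_1(v))=s$. Let $s'=\mathcal{V}_1(v)\in S_1$. By Claim~\ref{claim:connected_Voronoi}, $G_{D,\alpha}[V_{1,s'}]\subseteq B_{G_{D,\alpha}}(s',h)$ and $v\in V_{1,s'}$, so $d_{D,\alpha}(v,s')\le h$, hence $d_{D,\alpha}(s,s')\le \lfloor R/2\rfloor$ by the triangle inequality. I then want to upgrade this to a bound on $d_{S_1}(s,s')$. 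Here is where a little care is needed: a step along a shortest $G_{D,\alpha}$-path from $s$ to $s'$ moves between (possibly equal) micro-cells, and consecutive vertices on such a path lie in micro-cells that are either equal or adjacent in $G_{S_1}$; since $s\in S_1$ (indeed $s\in S_2\subseteq S_1$) lies in its own cell $V_{1,s}$ and likewise $s'\in V_{1,s'}$, the sequence of micro-cells visited gives a walk in $G_{S_1}$ from $s$ to $s'$ of length at most $d_{D,\alpha}(s,s')\le \lfloor R/2\rfloor$, so $d_{S_1}(s,s')\le \lfloor R/2\rfloor$. Finally, suppose $\mathcal{V}_2(s')=s''$ for some $s''\in S_2$, $s''\neq s$. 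Then $d_{S_1}(s',s'')+\ell_2(s'')\le d_{S_1}(s',s)+\ell_2(s)\le \lfloor R/2\rfloor + 1/2$, and since $d_{S_1}(s',s'')$ is an integer and the labels lie in $[0,1/2]$ this forces $d_{S_1}(s',s'')\le \lfloor R/2\rfloor$, whence $d_{S_1}(s,s'')\le d_{S_1}(s,s')+d_{S_1}(s',s'')\le R$, contradicting the fact (from Lemma~\ref{lem:maximal_k-independent_set} applied with parameter $R$) that any two elements of $S_2$ are at distance at least $R+1$ in $G_{S_1}$. Therefore $\mathcal{V}_2(s')=s$ and $v\in V_{2,s}$.

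\emph{Main obstacle.} I expect the only genuinely delicate point to be the passage between the metrics $d_{D,\alpha}$ and $d_{S_1}$ — specifically justifying that a short $G_{D,\alpha}$-path between two centers projects to a short $G_{S_1}$-walk, and that the $h$-radius of the micro-cells (the "$-h$" in the statement) is exactly the slack one loses in this translation. Everything else is a routine repetition of the perturbed-Voronoi argument already spelled out for Claim~\ref{claim:connected_Voronoi}. One should also note the asymmetry with Claim~\ref{claim:connected_Voronoi}: there is no claim here of an upper bound $V_{2,s}\subseteq B_{G_{D,\alpha}}(s,\cdot)$, which is consistent with the fact that macro-cells can be as large as balls of radius roughly $R$ plus the micro-cell diameter, and only the lower containment is needed downstream.
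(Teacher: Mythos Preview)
Your proposal is correct and follows essentially the same approach as the paper: both lift the perturbed-Voronoi argument of Claim~\ref{claim:connected_Voronoi} to the graph $G_{S_1}$ for connectedness, and for the ball containment both combine the bound $d_{D,\alpha}(v,\mathcal V_1(v))\le h$ with the comparison $d_{S_1}\le d_{D,\alpha}$ and a triangle inequality to force two elements of $S_2$ too close together. The only cosmetic difference is that you run the final triangle inequality in $d_{S_1}$ whereas the paper runs it in $d_{D,\alpha}$; your version is at least as clean.
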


\begin{proof}
    For each vertex $s'\in S_1$ which lies in $V_{2,s}$, the argument that was used to prove the first part of Claim~\ref{claim:connected_Voronoi} can be used directly to show that there is a path $P$ in $G_{S_1}$ which connects $s$ to $v$ and goes only through vertices in $S_1\cap V_{2,s}$. Consider an edge $(s_1,s_2)$ in this path and note that $s_1$ and $s_2$ belong to both $S_1$ and $V_{2,s}$, and that there is an edge connecting the two Voronoi cells $V_{1,s_1}$ and $V_{1,s_2}$. By Claim~\ref{claim:connected_Voronoi}, $G_{D,\alpha}[V_{1,s_1}]$ and $G_{D,\alpha}[V_{1,s_2}]$ are connected, so we can find a path completely contained in the graph $G_{D,\alpha}[V_{1,s_1}\cup V_{1,s_2}]$ which connects $s_1$ to $s_2$. Due to how the macro-cells are defined, these two Voronoi cells lie completely within $V_{2,s}$, and thus $G_{D,\alpha}[V_{1,s_1}\cup V_{1,s_2}]$ is a subgraph of $G_{D,\alpha}[V_{2,s}]$. We conclude that the edge $(s_1,s_2)$ of $P$ can be substituted by a path which connects its endpoints and lies completely within $G_{D,\alpha}[V_{2,s}]$; by the last part of the statement of Claim~\ref{claim:connected_Voronoi}, this path can be chosen to be of length at most $2h+1$ (this will be important during the proof of Theorem~\ref{thm:filaments}). Repeating this argument for every edge of $P$, we can construct a path $P'$ which connects $s$ to $s'$ and lies completely inside $G_{D,\alpha}[V_{2,s}]$.
    
    Now, suppose that $v$ is a vertex that belongs to both $V_{2,s}$ and $V_{1,s_v}$ for some $s_v\in S_1$. This implies that $s_v\in V_{2,s}$. The fact that $s_v\in V_{2,s}$ and $V_{1,s_v}\subseteq V_{2,s}$ now follows from the definition of $V_{2,s}$. Thus, taking $s_v$ in place of $s'$ during our discussion in the previous paragraph, we conclude that there exists some path in $G_{D,\alpha}[V_{2,s}]$ which connects $s$ to $s_v$. Using Claim~\ref{claim:connected_Voronoi} once again, we obtain that there exists some path connecting $s_v$ to $v$ which lies completely within $G_{D,\alpha}[V_{1,s_v}]$. Since $V_{1,s_v}\subseteq V_{2,s}$, this path is also contained in $G_{D,\alpha}[V_{2,s}]$. Hence, $G_{D,\alpha}[V_{2,s}]$ contains a path connecting $s$ to $s_v$, as well as a path connecting $s_v$ to $v$. This finishes the proof of the first statement in the claim.
    
    For the second part of the statement, we assume that there exists some vertex $v$ of $G_{D,\alpha}$ which belongs to $B_{G_{D,\alpha}}(s,\lfloor R/2\rfloor-h)$ but not $V_{2,s}$. Consider the vertex $s_v\in S_1$ satisfying $\mathcal V_1(v)=s_v$. Then, by the last part of the statement of Claim~\ref{claim:connected_Voronoi}, we get that $d_{G_{D,\alpha}}(v,s_v)\leq h$. The fact that $v\not\in V_{2,s}$ implies the existence of some $s'\in S_2$ such that \[d_{S_1}(s_v,s')+\ell_2(s')<d_{S_1}(s_v,s)+\ell_2(s)<d_{D,\alpha}(s_v,s)+\ell_2(s)<d_{D,\alpha}(s,v)+h+\ell_2(s)\,,\] where in the last step we have used the triangle inequality. This yields that $d_{S_1}(s_v,s')\leq d_{D,\alpha}(s,v)+h$. Since $v \in B_{G_{D,\alpha}}(s,\lfloor R/2\rfloor-h)$, we have that $d_{G_{D,\alpha}}(s,v)\leq\lfloor R/2\rfloor-h$. Putting things together and using the triangle inequality one more time, we conclude that \[d_{D,\alpha}(s,s')\leq d_{D,\alpha}(s,v)+d_{D,\alpha}(v,s_v)+d_{D,\alpha}(s_v,s')\leq  (\lfloor R/2\rfloor-h)+h+(\lfloor R/2\rfloor-h+h)\leq R\,.\] This contradicts the fact that any two elements of $S_2$ are at distance at least $R+1$ in $G_{D,\alpha}$.
\end{proof}

We are now ready to construct the percolation $\omega$ described in the statement of Theorem~\ref{thm:filaments} (i.e., the filaments).

\begin{proof}[Proof of Theorem~\ref{thm:filaments}]
    Note that it suffices to describe how $\omega$ behaves on the infinite connected components of $G_{D,\alpha}$. Indeed, we lose nothing by simply setting $\omega$ to be $1$ on all vertices of $G$ which do not belong to such a connected component. Let $S_1'$ and $S_2'$ denote the subsets of $S_1$ and $S_2$, respectively, which consist of those vertices that belong to an infinite connected component of $G_{D,\alpha}$. 
    
    We begin by discussing how to construct a filament $\Psi_s$ within $V_{2,s}$ for every $s\in S_2'$. Consider such a vertex $s$ and write $W_s=S_1\cap V_{2,s}$. Now, among all subgraphs of $G_{D,\alpha}[V_{2,s}]$ which are trees and contain every element of $W_s$, we select one with the least total number of vertices and call it $T_s$. The \textit{filament} $\Psi_s$ will consist of those vertices that belong to $T_s$. We remark that $T_s$ and $\Psi_s$ are well defined, since $G_{D,\alpha}[V_{2,s}]$ is connected. Furthermore, $G_{D,\alpha}[\Psi_s]$ is connected by definition, and the set $\Psi:=\bigcup_{s\in S_2'}\Psi_s$ contains every element of $S'_1$.

    We proceed to prove that, if $R$ is sufficiently large with respect to $h$, then the percolation $\omega$ defined by letting
  \begin{align*}
    \omega(v) :=
    \begin{cases*}
      0 & if $v\in \Psi$,\\
      1        & otherwise
    \end{cases*}
  \end{align*}
  for every vertex $v$ satisfies the properties in the statement of the theorem. 
  
    We begin by bounding the rate of $\omega$ from above. The main goal is to show that, for every $s\in S_2'$, $|\Psi_s|\leq \varepsilon|V_{2,s}|$. For starters, as we saw in the proof of Claim~\ref{claim:connected_Voronoi_2}, any two vertices $s_1,s_2\in W_s$ for which there is an edge joining $V_{1,s_1}$ to $V_{1,s_2}$ can be connected by means of a path of length at most $2h+1$ which lies entirely within $G_{D,\alpha}[V_{2,s}]$. This can be seen to imply that 
    \begin{equation}\label{eq:trees_upper_bound}
        |\Psi_s|\leq 2h(|W_s|-1)+1<2h|W_s|\,.
    \end{equation} 
    
    Next, we lower bound $|V_{2,s}|$ in terms of $|W_s|$. This is the first step of the proof at which the expansion of $G_{D,\alpha}$ comes into play. By Claim~\ref{claim:connected_Voronoi}, the ball $B_{G_{D,\alpha}}(s',h/2)$ is contained in $V_{1,s'}$ for every $s'\in W_s$. In particular, as $s'$ ranges over all elements of $W_s$, the balls $B_{G_{D,\alpha}}(s',h/2)$ are pairwise disjoint, and hence 
    \begin{equation}\label{eq:disjoint_balls}
        \sum_{s'\in W_s}|B_{G_{D,\alpha}}(s',h/2)|\leq |V_{2,s}|\,.
    \end{equation}
     However, we know that for any $s'\in W_s$ and any positive integer $k$, \[\frac{|\partial_{V,\operatorname{in}}^{G_{D,\alpha}} B_{G_{D,\alpha}}(s',k)|}{|B_{G_{D,\alpha}}(s',k)|}\geq\alpha\] (note that this is a slight abuse of notation, since $B_{G_{D,\alpha}}(s',k)$ is not a set of vertices, but a graph; still, the meaning should be clear). In particular, there are at least $\alpha\cdot|B_{G_{D,\alpha}}(s',k)|$ edges in $G_{D,\alpha}$ joining a vertex in $B_{G_{D,\alpha}}(s',k)$ to a vertex outside $B_{G_{D,\alpha}}(s',k)$. Since every vertex in $G_{D,\alpha}$ has degree at most $D$, this yields 
     \[|\partial_{V,\operatorname{out}}^{G_{D,\alpha}} B_{G_{D,\alpha}}(s',k)|\geq\alpha D^{-1}\cdot | B_{G_{D,\alpha}}(s',k)|\,.\]
     Iteratively applying this bound for $k=0,1,\dots h/2-1$, one obtains 
     \begin{equation}\label{eq:exp_growth}
         |B_{G_{D,\alpha}}(s',h/2)|\geq(1+\alpha D^{-1})^{h/2}\,.
     \end{equation} 
     Together with~\eqref{eq:disjoint_balls}, this gives us 
     \begin{equation}\label{eq:lower_bound_Vs}
         |V_{2,s}|\geq (1+\alpha D^{-1})^{h/2}\cdot|W_s|\,.
     \end{equation} 
     
     Finally,~\eqref{eq:trees_upper_bound} and~\eqref{eq:lower_bound_Vs} can be combined to obtain \[|\Psi_s|\leq 2h|W_s|\leq 2h\cdot |V_{2,s}|\cdot(1+\alpha D^{-1})^{-h/2}\leq \varepsilon|V_{2,s}|\,,\] where the last inequality follows from the choice of $h$.

    As in the proof of Theorem~\ref{thm:non-amenable}, we construct a simple mass transport $f$ to which we apply the mass transport principle~\eqref{eq:MTP}. This $f$ will be defined by\footnote{Note that, similarly as before, our notation does not reflect the dependency of $f$ on $\xi$, $\omega$, $m_{\ell_1}$ and $m_{\ell_2}$.}
    \begin{align*}
    f(G,\rho,o) :=
    \begin{cases*}
      1/|V_{2,s}| & if $\rho\in \Psi_s$ and $o\in V_{2,s}$ for some $s\in S_{2}'$,\\
      0        & otherwise.
    \end{cases*}
  \end{align*}
    If there exists some $s$ such that the first condition holds, then this $s$ is unique, so the function is well defined. 
    
    The expected amount of mass leaving the root is \[\mathbb E_{(G,\rho,\xi),\omega}\left[\sum_{o\in V(G)}f(G,\rho,o)\right]=\mathbb E\left[\sum_{s\in S_2'}\mathbf 1[\rho\in \Psi_s]\cdot\left(|V_{2,s}|\cdot\frac{1}{|V_{2,s}|}\right)\right]=\mathbb P[\rho\in \Psi]=\mathbb P[\omega(\rho)=0]\,.\] On the other hand, the expected amount of mass entering the root is \[\mathbb E_{(G,\rho,\xi),\omega}\left[\sum_{o\in V(G)}f(G,o,\rho)\right]=\mathbb E\left[\sum_{s\in S_2'}\mathbf 1[\rho\in V_{2,s}]\cdot\left(|\Psi_s|\cdot\frac{1}{|V_{2,s}|}\right)\right]\leq\mathbb E[1\cdot \varepsilon]= \varepsilon\,,\] where we have used the fact that $|\Psi_s|\leq\varepsilon|V_{2,s}|$ for all $s\in S_2'$, and then the pairwise disjointness of the Voronoi macro-cells for the last inequality. The mass transport now immediately tells us that $\mathbb P[\omega(\rho)=0]\leq \varepsilon$, as required.
    
    That Property (I) holds is almost immediate. Indeed, if $v$ is a vertex in some infinite connected component of $G_{D,\alpha}$, there is some $s_v\in S_1'$ such that $v\in V_{1,s_v}$. Clearly, $s_v$ is a vertex of $G_{D,\alpha}$ which satisfies $\omega(s_v)=0$. Now, Claim~\ref{claim:connected_Voronoi} implies that $v$ and $s_v$ are at distance at most $h$ with respect to $G_{D,\alpha}$, as desired.

    For Property (II), note that every vertex which is closed with respect to $\omega$ belongs to some filament $\Psi_s$. Hence, it suffices to show that $|\Psi_s|\geq N$ for every $s\in S_2'$. For every $s'\in S_1'$, since each vertex of $G_{D,\alpha}$ has degree at most $D$ and $G_{D,\alpha}[V_{1,s'}]$ is contained in $B_{G_{D,\alpha}}(s',h)$, \[|V_{1,s'}|\leq 1+D+D(D-1)+\dots+D(D-1)^{h-1}<D^{h+1}\,,\] where we have used that $D\geq2$. Given that $V_{2,s}=\bigcup_{s'\in W_s}V_{1,s'}$, we also have that
    \begin{equation}\label{eq:exp_growth2}
        |V_{2,s}|\leq \sum_{s'\in W_s}|V_{1,s'}|\leq|W_s|\cdot D^{h+1}\,.
    \end{equation}

    Next, we wish to lower bound $|V_{2,s}|$. This is where the second part of Claim~\ref{claim:connected_Voronoi_2} will be useful. Indeed, in the same way that~\eqref{eq:exp_growth} was obtained, one arrives at the inequality \[|V_{2,s}|\geq|B_{G_{D,\alpha}}(s',\lfloor R/2\rfloor-h)|\geq(1+\alpha D^{-1})^{\lfloor R/2\rfloor-h}\,.\] Alongside~\eqref{eq:exp_growth2}, this implies that $|W_s|\geq(1+\alpha D^{-1})^{\lfloor R/2\rfloor -h}\cdot D^{-h-1}.$ Since $W_s\subset \Psi_s$, it suffices to take $R$ large enough that \[(1+\alpha D^{-1})^{\lfloor R/2\rfloor -h}\cdot D^{-h-1}\geq N\,.\] 
    This concludes the proof.
\end{proof}

\section{Locally-small treewidth}\label{sec:local-treewidth}

For any two positive integers $t$ and $R$, a graph $G$ will be said to be $(t,R)$\textit{-locally-thin} if $B_G(v,R)$ has treewidth at most $t$ for every vertex $v$ of $G$.

In this section we show that if we start with a graph $(G,\rho)$ as in the statement of Theorem~\ref{thm:main} and then apply theorems~\ref{thm:non-amenable} and~\ref{thm:filaments} so as to obtain a graph $G_{\alpha,D}[\omega]$, then this new graph will be locally-thin. More precisely, we prove the following theorem.

\begin{theorem}\label{thm:locally_treelike}
    Let $\alpha$ be a positive real number and $D\geq 2$ be a positive integer. Then, for every $\varepsilon>0$ and every two positive integers $n$ and $r$, there exist two positive integers $t=t(\varepsilon,n)$ and $N=N(r)$, with $t$ independent of $r$ and $N$ depending only on $r$, such that the following holds: 
    
    Let $H$ be a graph on $n$ vertices and $(G,\rho)$ be a unimodular random rooted graph such that $G$ is a.s.\ one-ended and has no $H$-minor. Suppose that $\xi$ and $\omega$ are percolations of $(G,\rho)$ which satisfy the properties in the statement of Theorem~\ref{thm:filaments} with parameters $\varepsilon$ and $N$, and write $G_{D,\alpha}$ to denote $G[\xi]$. Then, every connected component of the graph $G_{D,\alpha}[\omega]$ which is contained in an infinite connected component of $G_{D,\alpha}$ is $(t,r)$-locally-thin.
\end{theorem}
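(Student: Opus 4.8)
The plan is to argue by contradiction: assuming that some ball $B=B_{\mathcal C}(v,r)$ inside a component $\mathcal C$ of $G_{D,\alpha}[\omega]$ which in turn lies in an infinite component of $G_{D,\alpha}$ has treewidth larger than $t$, I will derive a contradiction, and the parameters $t=t(\varepsilon,n)$ and $N=N(r)$ will be pinned down along the way. Fix $h=h(\varepsilon)$ as in Theorem~\ref{thm:filaments}, let $g$ be the function from the Grid-Minor Theorem (Theorem~\ref{thm:treewidth-walls}), and let $K_0(n)$ be a threshold, depending only on $n=|V(H)|$, past which the minor construction of the last paragraph below goes through. I set $N(r):=\lceil D^{r+2}/\alpha\rceil$ and choose $t=t(\varepsilon,n)$ large enough that $W_{g(t)\times g(t)}$ contains $k:=K_0(n)+1$ pairwise vertex-disjoint subgraphs isomorphic to $W_{5h\times 5h}$, any two of them separated by a full row or column of $W_{g(t)\times g(t)}$; since $g$ is nondecreasing and unbounded this is possible, and $t$ does not depend on $r$.

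Now suppose $B$ has treewidth $>t$. As $B$ is an induced subgraph of $G_{D,\alpha}$, all of its degrees are at most $D$, so $|B|<D^{r+1}$. By Theorem~\ref{thm:treewidth-walls}, $B$ contains a subgraph $W$ that is a subdivision of $W_{g(t)\times g(t)}$; inside it we obtain, from the choice of $t$, pairwise vertex-disjoint subgraphs $W_1,\dots,W_k$, each a subdivision of $W_{5h\times 5h}$ and any two separated by a row or column of $W$. Write $B_i$ for the outer cycle (``frame'') of $W_i$, and choose in each $W_i$ a vertex $u_i$ every $W_i$-path from which to $B_i$ passes through at least $2h$ vertices of degree $3$ in $W_i$ --- i.e.\ $u_i$ is central in the unsubdivided $W_{5h\times 5h}$. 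Each $u_i$ lies in $\mathcal C$, hence in an infinite component of $G_{D,\alpha}$, so Property~(I) of Theorem~\ref{thm:filaments} supplies a filament within $G_{D,\alpha}$-distance $h$ of $u_i$; fix such a filament $\Psi_i$ (so $|\Psi_i|\geq N$ by Property~(II)) together with a shortest path $P_i$ in $G_{D,\alpha}$ from $u_i$ to $\Psi_i$, of length at most $h$.

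The core of the argument is a dichotomy for each $i$: either (a) $V(P_i)\cap V(W)\subseteq V(W_i)$, or (b) some subpath of $P_i$ is a \emph{leap} --- a path in $G$ meeting $W$ only at its two endpoints whose union with $W$ is non-planar --- having an endpoint in $W_i$. The mechanism is that $u_i$ is so deep in $W_i$ that a path of length $\leq h<2h$ from $u_i$ cannot reach $B_i$ while staying in $W_i$; hence, if $P_i$ leaves $W_i$ at all, it first does so at a vertex strictly enclosed by $B_i$ in the standard planar drawing of $W$, and once $P_i$ later meets a vertex of $W$ outside $W_i$ it has escaped the region bounded by $B_i$, which forces non-planarity together with $W$; if $P_i$ meets no such vertex, alternative (a) holds. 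Making this planarity reasoning fully rigorous --- bookkeeping which vertices of the paths in play lie on frames versus strictly inside them, and invoking the right separation property of subdivided walls --- is, I expect, the main obstacle in the whole proof. Granting it, suppose two distinct indices $i,j$ both fail (b); then (a) holds for both, and the same reasoning shows that $W_{\setminus i,j}:=V(W)\setminus(V(W_i)\cup V(W_j))$ separates $u_i$ from $u_j$ in $G$ (a path between them in $G-W_{\setminus i,j}$ would contain a leap with an endpoint in $W_i$ or $W_j$). Since $G$ is one-ended and $W_{\setminus i,j}$ is finite, one of the components of $u_i$, $u_j$ in $G-W_{\setminus i,j}$ is finite; say it is the one of $u_i$. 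Because (a) holds, $P_i$ avoids $W_{\setminus i,j}$, so the component $C$ of $u_i$ in $G_{D,\alpha}-W_{\setminus i,j}$ contains $\Psi_i$, giving $N\leq|\Psi_i|\leq|C|<\infty$. As $C$ is a finite vertex set inside an infinite component of $G_{D,\alpha}$, its inner boundary has size at least $\alpha|C|\geq\alpha N$; but that inner boundary is covered by the $G_{D,\alpha}$-neighborhoods of the vertices of its outer boundary, which lies in $W_{\setminus i,j}$, so $\alpha N\leq D|W_{\setminus i,j}|\leq D|W|<D^{r+2}$, contradicting the choice of $N$. Hence at most one index is bad, so for at least $k-1$ indices there is a leap $P_{i,\mathrm{leap}}$ with an endpoint in $W_i$; since a leap can have endpoints in at most two of the $W_i$'s, these come from at least $(k-1)/2$ distinct leaps spread over at least $(k-1)/2$ of the sub-walls.

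It remains to show that $W\cup\bigcup_i P_{i,\mathrm{leap}}$ contains $H$ as a minor, contradicting $H$-minor-freeness of $G$; this is where $k\geq K_0(n)$ enters. Following the classical fact that a sufficiently large wall (or grid) with one crossing edge added inside each cell contains every fixed graph as a minor (cf.\ Figure~\ref{fig:crossed-grid}), I would fix a drawing of $H$ in the plane with only finitely many pairs of crossing edges, reproduce this drawing inside $W$ using the sub-walls $W_i$ as the cells, and route the over/under-pass at each crossing along one of the leaps, whose non-planarity with $W$ is exactly what lets one edge jump over another without meeting it. Carrying this out quantifies how many well-separated leaps a drawing of an $n$-vertex graph needs, and hence fixes $K_0(n)$ used above; since $(k-1)/2$ exceeds it we obtain the desired $H$-minor. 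This contradiction shows $B$ cannot have treewidth greater than $t$, i.e.\ every component of $G_{D,\alpha}[\omega]$ contained in an infinite component of $G_{D,\alpha}$ is $(t,r)$-locally-thin.
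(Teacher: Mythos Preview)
Your argument tracks the paper's own sketch (Section~\ref{subsec:locally_treelike}) closely and correctly through the dichotomy and the ``at most one bad index'' step; the choice of $N(r)$ and the expansion/one-endedness computation are fine. The genuine gap is entirely in your final paragraph, where you pass from ``for $k-1$ indices there is a leap $P_{i,\mathrm{leap}}$ with an endpoint in $W_i$'' to ``$W\cup\bigcup_i P_{i,\mathrm{leap}}$ has an $H$-minor.'' This step is the bulk of the actual proof in the paper, and your sentence about routing crossings along leaps does not address the real obstacles.

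Concretely: the leaps you produce are \emph{not} pairwise vertex-disjoint (your remark that a leap has endpoints in at most two $W_i$'s only bounds how many \emph{endpoints} collide, not internal vertices), and you have \emph{no control} over where the second endpoint of each leap lands in $W$. To use a leap as an overpass at a crossing you need both of its endpoints to sit in prescribed small regions of $W$, and you need different overpasses to be vertex-disjoint. The paper handles this via a chain of additional arguments: first it constructs a family $\mathcal L$ of leaps that may share $\xi$-open vertices but never $\xi$-closed ones (Claim~\ref{claim:many_leaps}), so that the union $\bigcup_{P\in\mathcal L}P$ has maximum degree $\leq D$; it then applies two combinatorial lemmas (Lemmas~\ref{lem:disjoint_leaps} and~\ref{lem:disjoint_leaps_2}) inside this bounded-degree graph to extract genuinely vertex-disjoint leaps $\mathcal L'$; it invokes the multidimensional Szemer\'edi theorem to find a $K\times K$ sub-array of subwalls each touched by some leap in $\mathcal L'$; and finally it runs a two-case ``bridged vs.\ non-bridged blocks'' analysis to force a crossed-grid minor (Proposition~\ref{prop:crossed-grid_minors}). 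None of this machinery is reproducible from the bare existence of $k-1$ leaps with one endpoint each in a prescribed subwall, so as written your last paragraph is not a proof sketch but a restatement of the goal.
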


As mentioned in Section~\ref{thm:locally_treelike}, the main tool that will be used to control the treewidth of balls of radius $r$ in $G_{D,\alpha}[\omega]$ is be the grid-minor theorem (Theorem~\ref{thm:treewidth-walls}). Let us prepare for the proof of the above statement with a few further preliminary results. 

By a \textit{crossed-}$k\times \ell$\textit{-grid}, we mean the graph obtained from a $k\times \ell$-grid by adding the two diagonals connecting the opposite vertices within every cell, as shown in Figure~\ref{fig:crossed-grid} for $k=\ell=7$.

\begin{figure}[ht]
    \centering
    \includegraphics[width=0.25\linewidth]{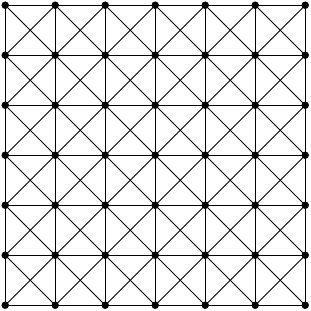}
    \caption{A crossed-$7\times 7$-grid.}
    \label{fig:crossed-grid}
\end{figure}

\begin{proposition}\label{prop:crossed-grid_minors}
    For every positive integer $n$, there exists another positive integer $\boxtimes (n)$ such that the crossed-$\boxtimes(n)\times \boxtimes(n)$-grid contains the complete graph $K_n$---and thus any other graph of order $n$---as a minor.
\end{proposition}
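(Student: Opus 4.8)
The plan is to realize $K_n$ inside the crossed grid by \emph{drawing it with crossings and then resolving each crossing with a crossed cell}. Fix a drawing $\mathcal D$ of $K_n$ in the plane that is in general position: place $v_1,\dots,v_n$ in convex position with no three collinear, draw every edge as a straight segment, and (perturbing the points generically if needed) assume no three segments pass through a common point. Then each $4$-subset of vertices contributes exactly one crossing, so $\mathcal D$ has exactly $\binom n4$ crossings, each a simple transversal crossing of two segments in their interiors, and any two segments that share an endpoint meet only at that endpoint. The crossed grid is tailored to carry such a picture: within any single cell, the two diagonals $p_{SW}p_{NE}$ and $p_{SE}p_{NW}$ are edges of the crossed grid sharing no vertex, so two paths can pass transversally through one cell without colliding.

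Next I would discretize $\mathcal D$. Overlay on $\mathcal D$ a square grid fine enough that every $v_i$ and every crossing of $\mathcal D$ lies in the interior of its own cell, these cells being pairwise disjoint, and then dilate everything by a constant factor (a factor of $3$ is comfortably enough) so that each crossing cell has a roomy neighborhood. Tracing the (slightly perturbed) segments through the grid turns $\mathcal D$ into $n$ pairwise disjoint connected subgraphs $B_1,\dots,B_n$ (``vertex gadgets'', one around each $v_i$) together with $\binom n2$ grid paths $P_{ij}$ $(i<j)$, where $P_{ij}$ runs from a cell of $B_i$ to a cell of $B_j$, meets $B_k$ only for $k\in\{i,j\}$ and then only at its endpoint, and is disjoint from every other $P_{kl}$ except that $P_{ij}$ and $P_{kl}$ pass through a common crossing cell precisely when the segments $v_iv_j,v_kv_l$ cross in $\mathcal D$. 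At each such crossing cell, reroute the two offending paths so that one uses the diagonal $p_{SW}p_{NE}$ and the other uses $p_{SE}p_{NW}$; the constant dilation leaves enough space to reconnect the stubs to the rest of the two paths without touching each other or any other path or gadget, and this is the only place where diagonal edges of the crossed grid get used.

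After these local modifications the crossed grid contains pairwise disjoint connected subgraphs $B_1,\dots,B_n$ and, for each $i<j$, a path $P_{ij}$ internally disjoint from everything else joining $B_i$ to $B_j$ — i.e.\ a subdivision of $K_n$ with branch vertices inflated to connected subgraphs. Contracting each $B_i$ and each $P_{ij}$ exhibits $K_n$ as a minor, and since $K_n$ has every graph on $n$ vertices as a subgraph (hence as a minor), the crossed grid we built works for all of them. Taking $\boxtimes(n)$ to be the side length of that grid completes the proof; from the convex straight-line drawing one gets $\boxtimes(n)=O(n^{2})$ (for instance, lay the $n$ vertices along the bottom row in $n$ blocks of $n$ columns and route each edge as an ``up–across–down'' staircase at its own distinct height), though the precise value is irrelevant.

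The only genuinely delicate part is the second step: ``discretize the drawing'' and ``reconnect the stubs'' conceal some bookkeeping — choosing the grid fine enough, perturbing $\mathcal D$ into general position relative to the grid lines, and checking that all gadgets and paths are pairwise disjoint apart from the prescribed incidences and crossings. I would keep this under control by working from the outset with the explicit ``up–across–down'' staircase realization above, in which all horizontal segments lie at distinct heights and all vertical segments in distinct columns, so that the only intersections between distinct edge-paths are forced transversal crossings of a horizontal segment with a vertical one, each resolved by a single crossed cell exactly as described.
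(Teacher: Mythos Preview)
Your proposal is correct and follows the same high-level strategy as the paper: draw $K_n$ in the plane with simple crossings, then use the diagonals of crossed cells to resolve each crossing into a pair of vertex-disjoint paths. The difference lies only in how the planar part of the picture is placed onto the grid. The paper adds a dummy vertex at every crossing to obtain a planar graph $K_n'$, invokes the standard fact that any planar graph is a minor of a sufficiently large ordinary grid, then refines the grid by a factor of $3$ and fattens the witness sets so that each dummy vertex occupies a patch large enough to host the two crossing diagonals. You instead discretize the drawing directly (or, more concretely, use the explicit up--across--down staircase layout), which avoids appealing to the planar-graph-in-grid theorem and yields the explicit bound $\boxtimes(n)=O(n^{2})$; the price is the bookkeeping you flag in your last paragraph, which the paper sidesteps by outsourcing to the black-box embedding result. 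Either route is perfectly adequate here.
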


\begin{proof}[Proof sketch]
    At a high level, the proof proceeds as follows:
    
    First, we consider a drawing of $K_n$ on the plane with no three edges crossing at a single point. After adding a dummy vertex at each crossing between any two edges, we obtain a new drawing of some planar graph $K_n'$. Then, we use this drawing as a blueprint to replicate $K_n$ within some very large crossed grid. Essentially, we start by representing $K_n'$ as a minor within a sufficiently large grid, and then use the crossing diagonals within the cells of the corresponding crossed grid in order to properly handle all dummy vertices. A more detailed argument is given in the \hyperref[sec:Appendix]{appendix}, but can safely be skipped, as it is disconnected from the remainder of this section.
\end{proof}

Our next result states that a connected graph of bounded degree must contain many vertex-disjoint paths with endpoints in some specified set, so long as the set itself is large.

\begin{lemma}\label{lem:disjoint_leaps}
    Let $G$ be a connected graph of maximum degree at most $D$ for some positive integer $D$. Then, for every set $S$ of $n$ vertices of $G$, it is possible to construct at least \[\left\lceil\frac{n-1}{D}\right\rceil\,\] vertex-disjoint paths in $G$ all of whose endpoints belong to $S$. 
\end{lemma}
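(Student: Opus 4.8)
The plan is to build the paths greedily, one at a time, while maintaining a tree that spans the vertices of $S$ that have not yet been "used up". I would start by fixing a spanning tree $T$ of $G$, and then take the \emph{Steiner tree} $T_S$ of minimal size inside $T$ (equivalently, inside $G$) that contains all $n$ vertices of $S$. This $T_S$ has the property that every leaf of $T_S$ lies in $S$; in particular $T_S$ has at least two leaves in $S$, unless $n=1$ in which case the statement is vacuous. The degree bound on $G$ is inherited by $T_S$, so $T_S$ is a tree of maximum degree at most $D$ whose leaf set is contained in $S$, and which contains at most $n$ vertices of $S$ in total but may also contain Steiner (non-$S$) vertices.

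The core of the argument is an extraction step: in a tree $T_S$ with maximum degree at most $D$ and with all leaves in $S$, one can find a path $P$ whose two endpoints are leaves of $T_S$ (hence in $S$) and such that, after deleting the internal vertices of $P$ from $T_S$, at least one vertex of $S\cap V(T_S)$ is "lost" per $D$ vertices removed — or, phrased more carefully, deleting $V(P)$ from $T_S$ breaks it into subtrees, each of which we re-Steinerize with respect to its remaining $S$-vertices, and the total count of $S$-vertices used by $P$ (its two endpoints, plus any $S$-vertices that become isolated or disconnected from the bulk) is at most $D$. The cleanest way to see the bookkeeping: take $P$ to be a path between two leaves of $T_S$ that passes through a centroid-like branch vertex, so that removing $P$ leaves components each containing at least one $S$-vertex; since every internal vertex of $P$ has degree at most $D$ in $T_S$, removing $V(P)$ (of whatever length) detaches at most $\sum_{v \in V(P)}(\deg_{T_S}(v)-2) + 2 \le (D-2)|V(P)| + 2$ dangling subtrees, but what we actually need to control is not the number of subtrees but the number of $S$-vertices consumed. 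I would instead argue by a direct induction on $|S \cap V(T_S)|$: if $T_S$ has $m := |S \cap V(T_S)| \ge 2$, pick any path $P$ between two leaves; the forest $T_S - V(P)$ contains at most $m - 2$ vertices of $S$ still present, re-Steinerize each component, and recurse. This shows we can extract $\lceil (m-1)/1 \rceil$... no — the bound we want is $\lceil (n-1)/D \rceil$, so the $D$ must enter. The $D$ enters precisely because, to \emph{guarantee} that the two subtrees hanging off the two ends of the extracted path still let us continue without wasting $S$-vertices, we choose $P$ to be a path from a leaf to a \emph{branch vertex} and out to another leaf through a vertex of maximum $T_S$-degree; each extraction can then be charged so that at least one new path is produced for every $D$ vertices of $S$ consumed. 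Concretely: repeatedly remove a leaf-to-leaf path; each such removal decreases the number of $S$-vertices in the current Steiner forest by at most $D$ (the two endpoints, plus at most $D-2$ further $S$-vertices that could get detached along a single internal vertex of degree $D$ and then become isolated singletons), and produces exactly one path. Hence after $k$ extractions we have used at most $1 + kD$... giving $k \ge \lceil (n-1)/D\rceil$ paths once $n-1 \le kD$, i.e. $k = \lceil (n-1)/D\rceil$, which is exactly the claimed bound. All produced paths are pairwise vertex-disjoint by construction, since at each step we delete the entire path from the working forest before continuing.

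The step I expect to be the main obstacle is making the charging "$D$ vertices of $S$ per path" fully rigorous — in particular, showing that a single extraction cannot strand more than $D-2$ additional $S$-vertices beyond the two endpoints. The naive worry is that a long path through many high-degree vertices could detach many small subtrees, each containing an $S$-vertex, wasting far more than $D$ of them. The fix is to choose the extracted path greedily as a \emph{shortest} path between two leaves of $T_S$ through a fixed branch vertex, or even simpler: in a tree all of whose leaves lie in $S$, any path between two leaves, when removed, leaves components whose \emph{leaves (in the new forest)} are either old leaves of $T_S$ (in $S$) or were neighbours of $V(P)$; re-Steinerizing, the only $S$-vertices permanently lost are the two endpoints of $P$ themselves, since every other $S$-vertex still lies in some component and gets retained by that component's Steiner tree. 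This actually gives the stronger bound $\lceil (n-1)/1\rceil = n-1$ paths — which is consistent with $\lceil (n-1)/D \rceil \le n-1$, so the weaker stated bound certainly follows, and the presence of $D$ in the statement is merely because the authors only need (and perhaps only want to prove) the weaker, more robust estimate. So in the write-up I would present the clean Steiner-tree extraction yielding disjoint leaf-to-leaf paths and observe that the count is comfortably at least $\lceil (n-1)/D\rceil$; the degree bound $D$ is then used only to convert, if desired, "number of paths" into a statement phrased uniformly across graphs, and no delicate charging is actually required.
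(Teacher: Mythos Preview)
Your final conclusion is incorrect, and the earlier charging attempts are never made rigorous. The claim that removing a leaf-to-leaf path from the Steiner tree ``permanently loses only the two endpoints'' and hence yields $n-1$ vertex-disjoint paths is false: take a star with centre $c\notin S$ and leaves $\ell_1,\ell_2,\ell_3\in S$. The only leaf-to-leaf path is $\ell_1\text{--}c\text{--}\ell_2$; after deleting it, $\ell_3$ survives but is alone in its component, so no further path can be formed. We get one path, not $n-1=2$, and this single path is exactly $\lceil (3-1)/3\rceil$ for $D=3$. More generally, your own worry was justified: a leaf-to-leaf path through many branch vertices can strand arbitrarily many $S$-vertices as singletons, so ``at most $D$ vertices of $S$ lost per extraction'' fails for an \emph{arbitrary} leaf-to-leaf path. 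You never specify a choice of $P$ that makes the charging go through.

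The paper's argument fixes exactly this. Working in a spanning tree rooted at an arbitrary vertex $\rho$, it chooses a vertex $v$ at \emph{maximum} distance from $\rho$ subject to the subtree $T_v$ below $v$ containing at least two elements of $S$. By maximality, each of the at most $D-1$ children's subtrees contains at most one $S$-vertex, and $v$ itself contributes at most one more, so $|T_v\cap S|\le D$ (the case $v=\rho$ gives $n\le D+1$, the base case). One then extracts any $S$--$S$ path inside $T_v$, deletes \emph{all} of $T_v$, and recurses on a tree with at least $n-D$ remaining $S$-vertices, giving $\lceil(n-1)/D\rceil -1$ further paths by induction. The key idea you were missing is this ``deepest subtree with two $S$-vertices'' selection, which is precisely what bounds the loss by $D$ per extraction.
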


\begin{proof}
    We proceed by induction on $n$. The statement is clearly true whenever $n\leq D+1$. 
    
    We may assume, without loss of generality, that $G$ is a tree whose vertices have degree at most $D$. Fix an arbitrary vertex $\rho$ of $G$ to be the root. 
    Now, for every vertex $v$ of $G$, let $T_v$ denote the subtree of $G$ which consists of all those vertices $u$ such that any path in $G$ connecting $\rho$ to $u$ contains the vertex $v$. Note that $v$ itself is a vertex of $T_v$. 
    
    Chose a vertex $v$ which is at the maximum possible distance from $\rho$ while still having the property that $T_v$ contains at least two elements of $S$. Within $T_v$, we can find at least one path $P$ joining two vertices of $S$. Furthermore, since every vertex of $G$ has degree at most $D$, the choice of $v$ guarantees that $T_v$ contains at most $D$ elements of $S$---unless $v=\rho$, in which case $S$ has at most $D+1$ elements, and we are done. We may now add $P$ to our collection of paths and remove every vertex of $T_v$ from the graph to obtain a new graph $G'$, to which we apply the inductive hypothesis. The graph $G'$ contains at least $n-D$ elements of $S$, so we can find \[\left\lceil\frac{n-1-D}{D}\right\rceil=\left\lceil\frac{n-1}{D}\right\rceil-1\] vertex-disjoint paths within it whose endpoints belong to $S$. Adding $P$ to this collection, the result follows.
\end{proof}

We will also require the following related result, where we now have multiple specified sets of vertices and our goal is to find several disjoint paths connecting pairs of vertices which belong to the same set.

\begin{lemma}\label{lem:disjoint_leaps_2}
    Let $G=(V,E)$ be a connected graph of maximum degree at most $D$ and $m$ be a positive integer. Suppose that $S_1,\dots,S_m$ are pairwise disjoint subsets of $V$, each of size at least $2+(D-1)(m-1)$. Then, there exist $m$ vertex-disjoint paths $P_1,\dots,P_m$ in $G$ such that, for all $1\leq i\leq m$, the endpoints of $P_i$ belong to $S_i$ .
\end{lemma}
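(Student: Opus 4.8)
The plan is to proceed by induction on $m$, extracting one path at a time while being careful to preserve enough of the remaining sets to continue. First I would handle the base case $m=1$: here each $S_i$ has at least two elements and $G$ is connected, so any path in $G$ joining two distinct vertices of $S_1$ works. For the inductive step, suppose the statement holds for $m-1$ and we are given pairwise disjoint sets $S_1,\dots,S_m$, each of size at least $2+(D-1)(m-1)$. As in the proof of Lemma~\ref{lem:disjoint_leaps}, I would reduce to the case where $G$ is a tree (pass to a spanning tree; this only removes edges and hence cannot create new paths, but it suffices since any path in the spanning tree is a path in $G$). Root the tree at an arbitrary vertex $\rho$, and for each vertex $v$ let $T_v$ be the subtree hanging below $v$ (all vertices $u$ whose path to $\rho$ passes through $v$, including $v$ itself).

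The key move is to choose a vertex $v$ at maximum distance from $\rho$ such that $T_v$ contains at least two elements of \emph{some} single set $S_i$; such a $v$ exists because $S_1$ itself has at least two elements and $T_\rho = V$. Fix the index $i$ witnessing this. Inside $T_v$ we can find a path $P_i$ joining two vertices of $S_i$. By the maximality of the distance of $v$ from $\rho$, for each child $w$ of $v$ the subtree $T_w$ meets each $S_j$ in at most one vertex; since $v$ has at most $D-1$ children (as its parent edge goes upward, unless $v=\rho$, a case we treat separately below), we conclude that for every $j$, $|T_v \cap S_j| \le 1 + (D-1) = D$ when $j \neq i$, and we only use that $|T_v\cap S_j|\le (D-1)$ for $j\ne i$ (the "$1+$" term is the contribution of $v$ itself, which, not lying in $S_j$, does not occur). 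More carefully: the elements of $S_j$ in $T_v$ with $j\ne i$ all lie in the subtrees $T_w$ for children $w$ of $v$, and there are at most $D-1$ such children, each contributing at most one; so $|T_v\cap S_j|\le D-1$ for each $j\ne i$.

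Now delete all vertices of $T_v$ from $G$ to obtain $G'$; since $T_v$ is a subtree hanging below $v$, the graph $G'$ is still connected (it is the tree minus a pendant subtree, with $v$'s parent still present). For each $j\ne i$ we have removed at most $D-1$ elements of $S_j$, so the surviving set $S_j' := S_j \setminus T_v$ has size at least $2+(D-1)(m-1) - (D-1) = 2 + (D-1)(m-2)$, which is exactly the bound needed to apply the inductive hypothesis to the $m-1$ sets $\{S_j'\}_{j\ne i}$ in $G'$. This yields $m-1$ pairwise vertex-disjoint paths, each joining two vertices of the appropriate $S_j'\subseteq S_j$, and all disjoint from $T_v$ and hence from $P_i$. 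Adjoining $P_i$ completes the collection of $m$ paths. The exceptional case $v=\rho$ happens only when $T_\rho=V$ is the only subtree with two elements of a common $S_i$; but then we may still run the argument by picking $P_i$ inside $V$ joining two vertices of $S_i$, removing a suitable pendant subtree containing one endpoint of $P_i$ together with as few $S_j$-vertices as possible — alternatively, one can simply re-root the tree at a leaf to avoid this degeneracy, since the choice of root is arbitrary. I expect the main obstacle to be bookkeeping the exact size losses: one must verify that choosing $v$ of maximum depth really does cap $|T_v\cap S_j|$ by $D-1$ for the "other" sets (not $D$), and that the arithmetic $2+(D-1)(m-1) \mapsto 2+(D-1)(m-2)$ lines up precisely with the inductive hypothesis; the geometric/tree part of the argument is routine once this counting is pinned down.
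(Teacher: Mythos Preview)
Your overall strategy matches the paper's exactly: induct on $m$, pass to a spanning tree, root it, pick $v$ deepest with $|T_v\cap S_i|\ge 2$ for some $i$, take $P_i$ inside $T_v$, delete $T_v$, and recurse. The re-rooting trick for $v=\rho$ also works (the paper instead argues directly that $v=\rho$ forces $m=1$).

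There is, however, a genuine gap in your counting. You assert that ``the elements of $S_j$ in $T_v$ with $j\ne i$ all lie in the subtrees $T_w$ for children $w$ of $v$,'' i.e.\ that $v\notin S_j$ for $j\ne i$. But nothing in your choice of $i$ guarantees this. The sets $S_1,\dots,S_m$ are disjoint, so $v$ lies in at most one of them---say $v\in S_k$---but $k$ need not be the index $i$ you fixed. If $v\in S_k$ with $k\ne i$, then $T_v$ can contain up to $1+(D-1)=D$ elements of $S_k$ (one in each child subtree, plus $v$ itself), and after deleting $T_v$ you would only know $|S_k'|\ge 2+(D-1)(m-1)-D = 1+(D-1)(m-2)$, which is one short of what the induction needs. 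A concrete instance: $D=3$, $m=2$, $v$ has two children $w_1,w_2$, with $v\in S_2$ and each $T_{w_\ell}$ containing exactly one point of $S_1$ and one of $S_2$; then $|T_v\cap S_1|=2$ and $|T_v\cap S_2|=3$, and if you pick $i=1$ you strip three vertices from $S_2$.

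The paper closes this gap with a small case distinction in the choice of $i$: if $v\in S_k$ for some $k$ \emph{and} $|T_v\cap S_k|\ge 2$, take $i=k$ (then disjointness gives $v\notin S_j$ for all $j\ne i$, and your bound $|T_v\cap S_j|\le D-1$ is valid). Otherwise, either $v$ lies in no $S_k$ (and again your bound holds for every $j\ne i$), or $v\in S_k$ but $|T_v\cap S_k|=1$; in the latter case pick any other valid $i$, and for $j=k$ you still get $|T_v\cap S_k|=1\le D-1$. With this refinement your proof goes through.
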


\begin{proof}
    Once more, we proceed by induction. The statement is clearly true for $m=1$. The argument below is very similar to the one in the proof of the previous lemma.

    Again, we may assume that $G$ is a tree all of whose vertices have degree at most $D$. Suppose that $S_1,\dots,S_m$ satisfy the hypothesis in the statement of the theorem. Root $G$ at some arbitrary vertex $\rho$ and, for every vertex $v\in V$, define $T_v$ as in the proof of Lemma~\ref{lem:disjoint_leaps}. Let $v$ be some vertex which is at the maximum possible distance from $\rho$ while still having the property that $T_v$ contains at least two elements of $S_i$ for at least one index $i$. If $v=\rho$, then for any of its at most $D$ neighbors $u$, the tree $T_u$ contains at most one element from each set $S_i$. If it were the case that $m\geq 2$, then there would be some set $S_i$ such that $\rho\not\in S_i$ and thus we would have that $|S_i|\leq D$, which is only possible if $m=1$. Hence, we may assume that $v\neq \rho$. 
    
    If there is some index $i$ such that $v$ belongs to $S_i$ and $T_v$ contains at least $2$ elements from $S_i$, then we let $P_i$ be some path which is fully contained in $T_v$ and has both endpoints in $S_i$. Otherwise, we arbitrarily choose some index $i$ with the property that $T_v$ contains at least two elements from $S_i$, and again let $P_i$ be a path in $T_v$ with both endpoints in $S_i$. In any case, for every index $j\neq i$, $T_v$ contains at most $D-1$ elements of $S_j$. Delete the subtree $T_v$ from $G$ to obtain a new graph $G'$, and let us forget about the set $S_i$. For each index $j\neq i$, the resulting graph contains at least $2+(D-1)(m-2)$ vertices from $S_j$. Thus, by the inductive hypothesis, there exist some vertex-disjoint paths $P_1,\dots,P_{i-1},P_{i+1},\dots,P_m$ within $G'$ such that $P_j$ has endpoints in $S_j$ for every $j\neq i$. Adding $P_i$ to this collection, we conclude the proof.
\end{proof}

We are now ready to move on to the proof of our main result in this section.

\begin{proof}[Proof of Theorem~\ref{thm:locally_treelike}]
    The values of $t$ and $N$ will be specified later, and the reader can think of them as being fixed for now. We will show that if some infinite connected component of $G_{D,\alpha}$ contains as a subgraph a subdivision of the wall $W_{g(t)\times g(t)}$ which consists only of vertices that are open with respect to $\omega$, then this subdivision must have so many vertices that it cannot possibly lie within a ball of radius $r$. 
    
    The first part of the proof essentially follows our discussion in Section~\ref{subsec:locally_treelike}. Suppose, for the sake of contradiction, that there is some subdivision $W$ of the wall $W_{g(t)\times g(t)}$ contained in some ball of radius $r$ in $G_{D,\alpha}[\omega]$ which, in turn, lies in an infinite connected component of $G_{D,\alpha}$. Given a path in $W$, we define its \textit{wall-length} as the number of vertices it contains, apart from its endpoints, which have degree $3$ in $W$ (that is, we ignore all subdivision vertices). The \textit{wall-distance} between two vertices $u$ and $v$ of $W$ is the least wall-length among all paths in $W$ with endpoints $u$ and $v$. A path $P$ in $G$ which has two vertices of $W$ as its endpoints will be called a \textit{leap} if it satisfies the following properties:
\begin{itemize}
    \item the only vertices of $P$ that belong to $W$ are its two endpoints;

    \item the graph $P\cup W$ is not planar.
\end{itemize}
    The first of our two main goals will be to show that we can find many vertex-disjoint leaps whose endpoints are somewhat spread out throughout $W$.
    
Let $h=h(\varepsilon)$ be as in the statement of Theorem~\ref{thm:filaments} and note that, without loss of generality, we may assume that $h\geq 4D$. Let $M=M(n)$ be some large positive integer to be specified later, and assume that $t$ is large enough that $g(t)\geq M\cdot7h$. Then, $W$ contains $M^2$ vertex-disjoint subdivisions of the $7h\times 7h$-wall which are arranged in an $M\times M$-grid-like fashion. An example of this has been depicted on the left of Figure~\ref{fig:walls_within_wall}. We label these subdivisions as $\overline W_1,\dots,\overline{W_{M^2}}$ in any order. For each $1\leq i\leq M^2$, let $W_i$ be the subdivision of the $5h\times 5h$-wall which is centered inside $\overline{W_i}$. For each such $i$, we also let $B_i$ denote the set of vertices on the outer-most layer of $W_i$, and then select a vertex $u_i$ from $W_i$ which lies at wall-distance at least $2h$ from every vertex in $B_i$. See the example on the right of Figure~\ref{fig:walls_within_wall}. 

\begin{figure}[ht]
    \centering
    \includegraphics[width=1\linewidth]{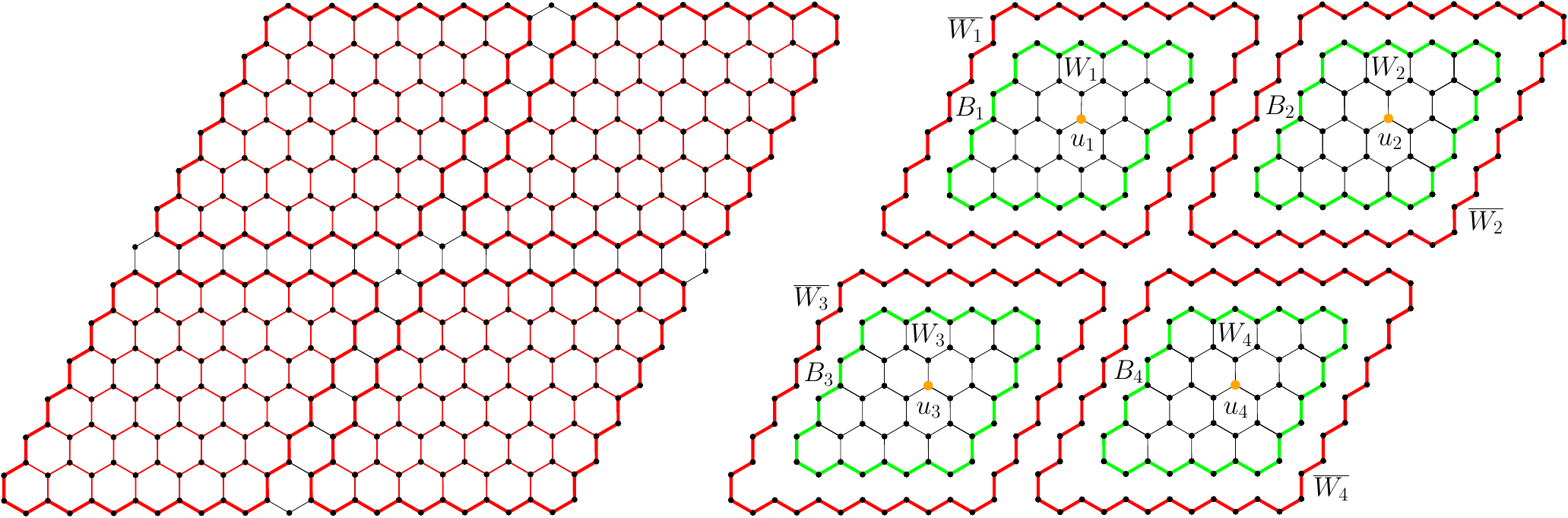}
    \caption{The picture on the left, which corresponds to the case $h=1$ and $M=2$, shows a $14\times 14$-wall where four $7\times 7$-walls have been highlighted in red. These $7\times 7$-walls are arranged to form a $2\times 2$-grid-like structure, and are labeled as $\overline{W_1},\dots,\overline {W_{4}}$. On the right, we see that within each $\overline{W_i}$ a $5\times 5$-wall $W_i$ has been chosen. We use $B_i$ to denote the outer-most layer of $W_i$, which has been highlighted in green. Lastly, within each $W_i$, we select a vertex $u_i$ (shown in orange) which lies at wall-distance at least $2$ from every vertex in $B_i$.}
    \label{fig:walls_within_wall}
\end{figure}

Since $u_i$ is a vertex of $G_{D,\alpha}[\omega]$ and its cluster with respect to $\xi$ is infinite, property (I) in the statement of Theorem~\ref{thm:filaments} guarantees the existence of a path $P_i$ of length at most $h$ which is contained in $G_{D,\alpha}$ and connects $u_i$ to some vertex $v_i$ that is closed with respect to $\omega$. Property (II) in the same theorem now ensures that $v_i$ belongs to some set $\Psi_i$ consisting of at least $N$ vertices from $G_{D,\alpha}$, all of which are closed with respect to $\omega$, and such that $G_{D,\alpha}[\Psi_i]$ is connected. As was essentially observed in Section~\ref{subsec:locally_treelike} (see Figure~\ref{fig:leap_to_void} and its description), the fact that $u_i$ is at wall-distance at least $2h$ from $B_i$ and $P_i$ has length at most $h$ implies that $P$ must satisfy at least one of the two following properties:
    \begin{enumerate}[label=(\alph*)]
        \item every vertex of $P_i$ which belongs to $W$ also belongs to $W_i$;
        
        \item there is at least one subsection of $P_i$ which constitutes a leap that is contained in $G_{D,\alpha}$ and has at least one endpoint in $W_i$.
    \end{enumerate}

   \begin{claim}\label{claim:many_leaps}
       There exists a collection of leaps $\mathcal L$  with the following properties:
    \begin{itemize}
        \item for all but at most one of the subdivisions $W_1,\dots,W_{M^2}$, there is at least one leap in $\mathcal L$ with at least one endpoint inside the subdivision;

        \item no vertex that is closed with respect to $\xi$ belongs to more than one of the leaps in $\mathcal L$.
    \end{itemize}
    \end{claim}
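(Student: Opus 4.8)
Call an index $i$ \emph{type (b)} if property (b) holds for $P_i$, and \emph{type (a)} otherwise (so property (a) holds). For each type-(b) index $i$, property (b) already supplies a leap with an endpoint in $W_i$ that is contained in $G_{D,\alpha}=G[\xi]$; such a leap has no $\xi$-closed vertex, so we may put all of them in $\mathcal L$ without ever threatening the second bullet. It remains to handle the type-(a) indices, and we are permitted to leave at most one of them uncovered. For a type-(a) index $i$, property (a) says $P_i$ meets $W$ only inside $W_i$; set $C_i:=P_i\cup G_{D,\alpha}[\Psi_i]$. Then $C_i$ is a connected subgraph of $G_{D,\alpha}$ with $|V(C_i)|\ge N$, lying inside an infinite connected component $\mathcal C_i$ of $G_{D,\alpha}$ (which has $\Phi_V\ge\alpha$), and with $V(C_i)\cap V(W)\subseteq V(W_i)$.

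The heart of the argument is the following \emph{escape property}, and this is where $N=N(r)$ is chosen (taking $N>\alpha^{-1}D^{r+2}$ will do): for every finite set $F$ of $\xi$-closed vertices and every type-(a) index $i$, the connected component of $C_i$ in $G\setminus\big((V(W)\setminus V(W_i))\cup F\big)$ is infinite. Suppose not, and let $K$ be this finite component; since $\Psi_i\subseteq C_i\subseteq K$ we have $|K|\ge N$. Let $K'$ be the connected component of $\Psi_i$ in $\mathcal C_i[K\cap V(G_{D,\alpha})]$, a finite connected subset of the $\alpha$-expander $\mathcal C_i$ with $|K'|\ge N$. Each vertex of $\partial_{V,\operatorname{in}}^{\mathcal C_i}K'$ has a neighbour in $G_{D,\alpha}$ outside $K$, hence (the outer boundary of a component of $G$ minus a vertex set being contained in that set) a neighbour in $\big((V(W)\setminus V(W_i))\cup F\big)\cap V(G_{D,\alpha})$; as $F$ consists of $\xi$-closed vertices, that neighbour actually lies in $V(W)$. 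Since $W$ sits inside a ball of radius $r$ of $G_{D,\alpha}[\omega]$, whose maximum degree is at most $D$, we have $|V(W)|<D^{r+1}$, and each vertex of $V(W)$ has at most $D$ neighbours in $G_{D,\alpha}$; hence $|\partial_{V,\operatorname{in}}^{\mathcal C_i}K'|<D^{r+2}\le\alpha N\le\alpha|K'|$, contradicting $\Phi_V(\mathcal C_i)\ge\alpha$.

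Granting the escape property, dispose of the type-(a) indices as follows. If there is at most one, leave it uncovered: this is the single permitted exception. Otherwise group them into pairs, one enlarged to a triple if their number is odd, and process the pairs one at a time while maintaining the finite set $F$ of $\xi$-closed vertices used so far (initially empty). For a pair $\{i,j\}$, the escape property applied with the current $F$ (deleting only $V(W)\setminus(V(W_i)\cup V(W_j))$ together with $F$) puts $C_i$ and $C_j$ into a common component of $G\setminus\big((V(W)\setminus(V(W_i)\cup V(W_j)))\cup F\big)$; pick a path in that graph from $C_i$ to $C_j$, splice it with suitable sub-paths of $C_i$, $C_j$, $P_i$, $P_j$ joining $u_i$ to $u_j$, and truncate it between its last vertex in $W_i$ and its first subsequent vertex in $W_j$. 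The resulting path meets $W$ only at these two endpoints, and it is a leap: since $W_i$ and $W_j$ are entire sub-walls of $W$ separated by at least a full row or column and buffered away from the boundary of $W$ (this is the role of the $5h\times 5h$-inside-$7h\times 7h$ choice), no planar embedding of $W$ has a vertex of $W_i$ and one of $W_j$ on a common face, so adjoining such a path destroys planarity. Add this leap to $\mathcal L$ and append its $\xi$-closed vertices (which can only come from the $C_i$-to-$C_j$ portion, the rest lying in $G_{D,\alpha}$) to $F$; for a triple $\{i,j,k\}$ do this for $\{i,j\}$ and then for $\{j,k\}$. Every $W_i$ with $i$ type (a) is then an endpoint of some leap we added, no $\xi$-closed vertex lies on two leaps of $\mathcal L$, and at most one sub-wall is left out — exactly the claim.

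\textbf{Main obstacle.} The genuinely delicate point is the second bullet: preventing $\xi$-closed vertices from being reused. The naive worry is that the connecting paths can be long and $\xi$-closed vertices can have unbounded degree, so forbidding previously used $\xi$-closed vertices might disconnect a blob; the escape property is designed precisely to exclude this, the key being that the inner boundary of a trapped set, \emph{measured inside} $G_{D,\alpha}$, can only meet $V(W)$, hence is bounded by $D^{r+1}$ uniformly in $F$. Everything else — the non-planarity of the constructed paths and the bookkeeping of the pairing — is routine.
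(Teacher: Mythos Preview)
Your approach differs from the paper's in organization. The paper builds $\mathcal L$ by a pure greedy argument: whenever two sub-walls $W_i,W_j$ remain uncovered, it shows directly that a leap with an endpoint in one of them can be added without reusing a $\xi$-closed vertex, and only invokes the expansion bound at the terminal step (when one of the two components of $G_{D,\alpha}\setminus W_{\backslash i,j}$ turns out to be finite). You instead front-load the expansion argument into a uniform ``escape property'' for each type-(a) blob $C_i$, then dispose of the type-(a) indices by a fixed pairing. Both routes rely on exactly the same three ingredients --- the free leaps from (b), the expansion of the infinite components of $G_{D,\alpha}$, and a connecting path through $G$ avoiding previously used $\xi$-closed vertices --- so the difference is structural rather than conceptual; your abstraction makes the independence of the expansion bound from $F$ very transparent, at the price of a slightly longer bookkeeping step.

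There is, however, one genuine omission. Your escape property shows that $C_i$ lies in an \emph{infinite} component of $G\setminus\big((V(W)\setminus V(W_i))\cup F\big)$; when you pass to the smaller deleted set $(V(W)\setminus(V(W_i)\cup V(W_j)))\cup F$ this gives that $C_i$ and $C_j$ each lie in infinite components, but it does \emph{not} by itself force these components to coincide. The missing observation is precisely that $G$ is one-ended: removing the finite set $(V(W)\setminus(V(W_i)\cup V(W_j)))\cup F$ from a one-ended graph leaves a unique infinite component, so the two blobs must lie in it. The paper invokes one-endedness explicitly at the analogous point. As written, your sentence ``the escape property \ldots\ puts $C_i$ and $C_j$ into a common component'' is unjustified; once you insert this one line, the rest of your argument (truncating the connecting walk to a leap, the non-planarity via the $5h$-inside-$7h$ buffer, and the disjointness of $\xi$-closed vertices across successive steps) goes through.
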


    \begin{proof}[Proof of Claim~\ref{claim:many_leaps}]
    As we show below, the collection $\mathcal L$ can be constructed greedily.
    
    Suppose, on the contrary, that at some point there exist two indices $i$ and $j$ with $1\leq i<j\leq M^2$ such that no leap with an endpoint within $W_i$ or $W_j$ can be added to $\mathcal L$ without violating the second of the two conditions above (this also encompasses the situation where there is simply no leap with an endpoint in one of the two subdivisions). Then, property (a) above must hold for both $i$ and $j$, or we would be able to add $P_i$ or $P_j$ to $\mathcal L$. Let $V_{\xi,\mathcal L}$ be the set of vertices of $G$ which are closed with respect to $\xi$ and belong to some leap in $\mathcal L$; this set is finite. Denote by $W_{\backslash i,j}$ the set of vertices of $W$ which do not belong to $W_i$ nor $W_j$. Note that $i$ and $j$ must belong to different connected components of the graph $G_{D,\alpha}\backslash W_{\backslash i,j}$, or else there would exist some leap which is contained in $G_{D,\alpha}$ and has one endpoint in $W_i$ and the other in $W_j$, and we could add it to $\mathcal L$. Now, since property (a) holds for both indices, the paths $P_i$ and $P_j$ are completely contained within $G_{D,\alpha}\backslash W_{\backslash i,j}$. Moreover, the sets $\Psi_i$ and $\Psi_j$ are also completely contained in $G_{D,\alpha}\backslash W_{\backslash i,j}$, since they consist only of vertices which are closed with respect to $\omega$ and $W$ is made out of vertices which are open. If follows that the connected components of $i$ and $j$ in $G_{D,\alpha}\backslash W_{\backslash i,j}$ contain $\Psi_i$ and $\Psi_j$, respectively. Suppose for a moment that both of these components are infinite. Then, the fact that $G$ is one-ended, together with the observation that both $V_{\xi,\mathcal L}$ and $W_{\backslash i,j}$ are finite, tells us that we can find some path in $G\backslash(W_{\backslash i,j}\cup V_{\xi,\mathcal L})$ which connects $W_i$ to $W_j$. This path can be added to $\mathcal L$, thus contradiction our initial assumption. Hence, we may assume that at least one of the two connected components is finite. 

    Suppose, without loss of generality, that the connected component of $i$ in $G_{D,\alpha}\backslash W_{\backslash i,j}$ is finite. Let $U$ denote the set of vertices in this component, and note that $W_{\backslash i,j}\supseteq\partial_{V,\operatorname{out}}^{G_{D,\alpha}}U$. Since $\Psi_i$ is a subset of $U$, we know that $|U|\geq|\Psi_i|\geq N$. Now, the fact that every infinite connected component of $G_{D,\alpha}$ has vertex-expansion constant at least $\alpha$, along with our assumption that $W$ (and thus also $U$) is fully contained in such a component, implies \[|\partial_{V,\operatorname{in}}^{G_{D,\alpha}}U|\geq\alpha|U|\geq\alpha N\,.\] Since $G_{D,\alpha}$ has maximum degree at most $D$, we also know that $|\partial_{V,\operatorname{out}}^{G_{D,\alpha}}U|\geq|\partial_{V,\operatorname{in}}^{G_{D,\alpha}}U|/D$. Putting everything together, one concludes that \[|W|> |W_{\backslash i,j}|\geq |\partial_{V,\operatorname{out}}^{G_{D,\alpha}}U|\geq |\partial_{V,\operatorname{in}}^{G_{D,\alpha}}U|/D\geq (\alpha D^{-1}) N\,.\] On the other hand, every ball of radius $r$ in $G_{D,\alpha}[\omega]$ contains less than $D^{r+1}$ vertices. Hence, as long as $N=N(r)$ is chosen such that \[(\alpha D^{-1})N\leq D^{r+1}\,,\] $W$ cannot be contained within a ball of radius $r$ in $G_{D,\alpha}[\omega]\,.$ This contradiction concludes the proof of the claim.
    \end{proof}

    Let $\mathcal L$ be a minimal set of leaps---with respect to inclusion---satisfying the properties in the statement of the above claim. Our next step consists of showing that if $M$ is large enough with respect to $n$ then the graph $W\cup\left(\bigcup_{P\in \mathcal L} P\right)$ must have $H$ as a minor. Towards this goal, we begin by finding, within this graph, many vertex-disjoint leaps with endpoints in different $\overline{W_i}$'s.

\begin{claim}
    There exists a family $\mathcal L'$ of pairwise vertex-disjoint leaps such that, for at least 
    \begin{equation}\label{eq:many_disjoint_leaps}
         \frac{M^2-1}{4D}\,
    \end{equation} distinct indices $1\leq i\leq M^2$, there is some leap in $\mathcal L'$ which has at least one endpoint within $\overline{W_i}$.
\end{claim}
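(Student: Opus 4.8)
The plan is to obtain $\mathcal{L}'$ by extracting a pairwise vertex-disjoint subfamily from the collection $\mathcal{L}$ of Claim~\ref{claim:many_leaps}, after first checking that $\mathcal{L}$ is large and that each of its leaps has a ``private'' target subdivision. Since $W_1,\dots,W_{M^2}$ are pairwise vertex-disjoint, each of the two endpoints of a leap of $\mathcal{L}$ lies in at most one $W_i$, so a single leap can have an endpoint in at most two of the $M^2$ subdivisions. As the first conclusion of Claim~\ref{claim:many_leaps} gives an endpoint of some leap of $\mathcal{L}$ in all but at most one of these subdivisions, we get $|\mathcal{L}|\ge (M^2-1)/2$. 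Moreover, by the minimality of $\mathcal{L}$ (with respect to inclusion), for every $P\in\mathcal{L}$ there is an index $i(P)$ such that $P$ is the only leap of $\mathcal{L}$ with an endpoint in $W_{i(P)}$ — otherwise $\mathcal{L}\setminus\{P\}$ would still satisfy both conclusions of the claim. The indices $i(P)$ are then pairwise distinct, and I fix one endpoint $a_P\in W_{i(P)}$ of each $P$.

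Next I would let $\mathcal{L}'\subseteq\mathcal{L}$ be a subfamily that is maximal subject to its members being pairwise vertex-disjoint. By maximality every leap of $\mathcal{L}$ shares a vertex with some member of $\mathcal{L}'$, and by the second conclusion of Claim~\ref{claim:many_leaps} any such shared vertex is open with respect to $\xi$, hence lies in $G_{D,\alpha}$, whose maximum degree is at most $D$. The key estimate is that a single leap $Q\in\mathcal{L}'$ shares a vertex with at most $2D$ leaps of $\mathcal{L}$ (counting $Q$ itself). Granting this, the sets $\{P\in\mathcal{L}:V(P)\cap V(Q)\neq\emptyset\}$ for $Q\in\mathcal{L}'$ cover $\mathcal{L}$, so $|\mathcal{L}|\le 2D\,|\mathcal{L}'|$ and hence $|\mathcal{L}'|\ge |\mathcal{L}|/(2D)\ge (M^2-1)/(4D)$. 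Since the indices $i(P)$ attached to the leaps $P\in\mathcal{L}'$ are pairwise distinct and each $a_P$ lies in $W_{i(P)}\subseteq\overline{W_{i(P)}}$, the family $\mathcal{L}'$ has an endpoint inside at least $(M^2-1)/(4D)$ of the subdivisions $\overline{W_i}$, which is exactly the statement to be proved.

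The main obstacle is precisely the collision estimate for the leaps of $\mathcal{L}'$ — equivalently, showing that this subselection costs only a factor $2D$. The difficulty is that two leaps of $\mathcal{L}$, although they can touch only along $G_{D,\alpha}$, may run alongside each other for a long stretch inside $G_{D,\alpha}$ before diverging, so a naive degree count at a single vertex of $Q$ is not enough. I expect to handle this either by a careful charging argument that assigns to each leap $P$ meeting $Q$ the first vertex of $Q$ reached when traversing $P$ from $a_P$ together with the edge of $P$ used to arrive there (exploiting that a competing leap entering through an edge whose other endpoint is $\xi$-closed would violate the second conclusion of Claim~\ref{claim:many_leaps}, and that a leap cannot leave and re-enter $Q$ without a fresh collision we have already charged), or — more robustly — by first rerouting the leaps so that every vertex of $G_{D,\alpha}$ lies on at most $D$ of them, after which the union of the rerouted leaps together with their private endpoints is a bounded-degree graph to which Lemma~\ref{lem:disjoint_leaps} applies directly. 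In the latter approach the width-$h$ buffer $\overline{W_i}\setminus W_i$ around each $W_i$ provides the slack needed to replace an offending leap by a shorter one that ends at a nearby wall vertex, which is why the claim is phrased with $\overline{W_i}$ rather than $W_i$.
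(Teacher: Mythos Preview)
Your main argument has a genuine gap: the claim that each $Q\in\mathcal L'$ meets at most $2D$ leaps of $\mathcal L$ is simply false in general. A leap $Q$ may have arbitrarily many vertices in $G_{D,\alpha}$, and at each such interior vertex up to $D-2$ other leaps of $\mathcal L$ can branch off; so the number of leaps meeting $Q$ can be of order $D$ times the length of $Q$, not $2D$. You correctly flag this as the obstacle, but the charging argument you sketch (charging each $P$ to the first vertex of $Q$ it reaches and the edge of arrival) still fails: many distinct leaps can first hit $Q$ at many distinct vertices, so the charge is spread over all of $Q$, not over two vertices. No bound of the shape ``constant times $|\mathcal L'|$'' follows from a maximal vertex-disjoint subfamily.

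The paper's proof takes exactly the route you describe at the very end as the ``more robust'' option, and the execution is what is missing from your proposal. One forms the union graph $G_{\mathcal L}=\bigcup_{P\in\mathcal L}P$ and observes that it has maximum degree at most $D$: vertices closed for $\xi$ lie on a single leap (degree $\le 2$) by Claim~\ref{claim:many_leaps}, while $\xi$-open vertices have degree $\le D$ already in $G_{D,\alpha}$. One then applies Lemma~\ref{lem:disjoint_leaps} to each connected component of $G_{\mathcal L}$ with the set $S_{\mathcal L}$ of (at least $M^2-1$) private endpoints to extract $\ge |S_{\mathcal L}|/(2D)$ pairwise vertex-disjoint paths with endpoints in $S_{\mathcal L}$. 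These paths are not yet leaps, because they may pass through other wall vertices, so one counts those obstructions: each such path either contains at least $h$ wall vertices outside $S_{\mathcal L}$, or has a subsection that is a leap with an endpoint in $\overline W_i$ (this is where the $h$-wide buffer $\overline W_i\setminus W_i$ is used). Since the number of available obstruction vertices is at most $|S_{\mathcal L}|$ and the paths are vertex-disjoint, the first alternative happens for at most $|S_{\mathcal L}|/h$ of them; using $h\ge 4D$ yields the bound $(M^2-1)/(4D)$.
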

    
    \begin{proof}
        By the minimality of $\mathcal L$, each leap $P\in \mathcal L$ has an endpoint $v_P$ in some subdivision $W_i$ such that $P$ is the only leap in $\mathcal L$ with an endpoint in $W_i$. Hence, we can select a set $S_\mathcal L$ of at least $M^2-1$ vertices from $\bigcup_{i=1}^{M^2}W_i$ such that: each of its elements is and endpoint of some leap in in $\mathcal L$, no two of its elements belong to the same $W_i$, and $v_P\in S_\mathcal L$ for each $P\in \mathcal L$. Now, let $G_{\mathcal L}$ be the graph $\bigcup_{P\in \mathcal L}P$ and define $S_\mathcal L'$ as the set of vertices of $G_\mathcal L$ which belong to $W\backslash S_\mathcal L$. Let $G_{\mathcal L,1},\dots,G_{\mathcal L,c}$ be the connected components of $G_\mathcal L$ and, for each $1\leq i\leq c$, write $S_{\mathcal L,i}$ (resp. $S_{\mathcal L,i}'$) to denote the set of vertices of $S_{\mathcal L}$ (resp. $S_\mathcal L'$) which belong to $G_{\mathcal L,i}$. By the properties of $S_\mathcal L$, it is not hard to see that $|S_{\mathcal L,i}'|\leq |S_{\mathcal L,i}|$ holds for all indices $i$.
        
        We will apply Lemma~\ref{lem:disjoint_leaps} within each connected component $G_{\mathcal L,i}$ with $|S_{\mathcal L,i}|\geq 2$. Each vertex which is closed with respect to $\xi$ shows up in at most one of the leaps in $\mathcal L$, and every other vertex of $G$ has degree at most $D$. Hence, every vertex of $G_{\mathcal L}$ has degree at most $\max\{2,D\}=D$. Thus, the lemma guarantees that, within each such connected component, there exists a set $\mathcal P_i$ of \[\left\lceil\frac{|S_{\mathcal L,i}|-1}{D}\right\rceil\geq\frac{|S_{\mathcal L,i}|}{2D}\] pairwise vertex-disjoint paths with endpoints in $S_{\mathcal L,i}$. Let $P\in \mathcal P_i$ be one these paths and suppose that one of its endpoints lies in $W_j$. Observe that, without loss of generality, we can assume that $P$ does not go through any vertex of $S_{\mathcal L}$ other than its endpoints. Note that $P$ is not necessarily a leap, as it could contain vertices from $S_{\mathcal L,i}'$ in its interior. However, for all $1\leq i\leq M^2$, every vertex in $W_i$ is at wall-distance at least $2h$ from every vertex of $W$ which lies outside $\overline{W_i}$. This tells us that $P$ satisfies at least one of the two following properties: either it contains at least $h$ elements of $S_{\mathcal L,i}'$, or some subsection of it constitutes a leap with an endpoint in $\overline {W_{i}}$---this is once again true by essentially the same argument that appeared in the caption of Figure~\ref{fig:leap_to_void}. 
        
        Since the paths in $\mathcal P_1$ are pairwise vertex-disjoint, there are at most $|S_{\mathcal L,i}'|/h\leq |S_{\mathcal L,i}|/h$ choices of $P$ for which the first of these two properties above is satisfied. Furthermore, as we go trough all paths in $\mathcal P_i$, the indices $j$ will all be distinct. As $h\geq 4D$, it follows that we can find, within $G_{\mathcal L,i}$, a collection $\mathcal L_i$ of  pairwise vertex-disjoint leaps such that the set of all their endpoints intersects at least \[\frac{|S_{\mathcal L,i}|}{2D}-\frac{|S_{\mathcal L,i}|}{h}\geq \frac{|S_{\mathcal L,i}|}{4D}\] distinct $\overline {W_{j}}$'s. 

        Adding over all connected components $G_{\mathcal L,i}$ with $|S_{\mathcal L,i}|\geq 2$, and then noting that every connected component with $|S_{\mathcal L,i}|=1$ must simply be a path that can be directly added to $\mathcal L'$, we arrive at the desired conclusion. Here, we have used the fact that $|S_{\mathcal L}|\geq M^2-1$.
    \end{proof}

Let $\mathcal L'$ be as in the above claim. For the sake of convenience, we shall relabel the subdivisions $\overline{W_1},\dots,\overline{W_{M^2}}$ as $\overline{W_{i,j}}$ ($1\leq i,j\leq M$), where $\overline{W_{i,j}}$ denotes the subdivision which lies in row $i$ and column $j$ within the $M\times M$-grid-like structure formed by the $\overline{W_i}$'s. Write \[Q=Q(n):=\left\lceil\sqrt{2+(D-1)\cdot(\boxtimes(n)^2-2\boxtimes(n))}\right\rceil+2\] and define \[K=K(n):=(\boxtimes(n)-1)\cdot\left[(\boxtimes (n)-1)\cdot Q+2\right]\,.\] We will say that a subdivision $\overline{W_{i,j}}$ is $\mathcal L'$\textit{-connected} if there is some leap in $\mathcal L'$ with an endpoint in $\overline{W_i}$. Taking $M\geq 5$ ensures that the expression in~\eqref{eq:many_disjoint_leaps} evaluates to at least $M^2/(5D)$, and thus at least $M^{2}/(5D)$ of the $\overline{W_{i,j}}$'s are $\mathcal L'$-connected. Now, the multidemnsional version of Szemerédi's theorem~\cite{furstenberg1978ergodic}\footnote{See the \hyperref[sec:Appendix]{appendix} and, in particular, Theorem~\ref{thm:generalized-szemeredi} for further details regarding this step.} tells us that, if $M$ is sufficiently large, then there must exist a pair of integers $(a,b)$ and some positive integer $q$ with $1\leq a,b\leq M-q(K-1)$ such that $\overline{W_{a+qk_1,b+qk_2}}$ is $\mathcal{L}'$-connected whenever $0\leq k_1,k_2\leq K-1$. In other words, there exists a family of $\mathcal L'$-connected subdivisions which are arranged in a $K\times K$-grid-like structure, although we have no control on the spacing between the rows/columns of this $K\times K$-grid. 

We now classify the subdivisions in this $K\times K$ arrangement into \textit{blocks}. The blocks will be indexed by ordered pairs $(i,j)$ of integers with $1\leq i,j\leq \boxtimes(n)-1$, and will be accordingly denoted as $Z_{i,j}$. The \textit{block} $Z_{i,j}$ consists of all those subdivisions $\overline{W_{a+qk_1,b+qk_2}}$ with \[(i-1)\frac{K}{\boxtimes(n)-1}\leq k_1<i\frac{K}{\boxtimes (n)-1}\quad\text{and}\quad (j-1)\frac{K}{\boxtimes(n)-1}\leq k_2<j\frac{K}{\boxtimes (n)-1}\,.\] This way, the blocks constitute a partition of the $K\times K$ arrangement of subdivisions, and are themselves arranged in a $(\boxtimes(n)-1)\times (\boxtimes(n)-1)$-grid-like manner. Moreover, each block itself consists of a $K'\times K'$ array of subdivisions of the form $\overline{W_{a+qk_1,b+qk_2}}$, where \[K':=\frac{K}{\boxtimes(n)-1}=(\boxtimes (n)-1)\cdot Q+2\,.\] 

We will say that a subdivision $\overline{W_{a+qk_1,b+qk_2}}$ is an \textit{inner} subdivision of the block $Z_{i,j}$ if \[(i-1)\frac{K}{\boxtimes(n)-1}+1\leq k_1<i\frac{K}{\boxtimes (n)-1}-1\quad\text{and}\quad (j-1)\frac{K}{\boxtimes(n)-1}+1\leq k_2<j\frac{K}{\boxtimes (n)-1}-1\,.\] In other words, the inner subdivisions are those which belong to the block but do not lie on its outer-most layer. A vertex $v$ of $W$ will be said to be \textit{enclosed} by $Z_{i,j}$ if either it belongs to an inner subdivision of this block, or the subdivision of the form $\overline{W_{a+qk_1,b+qk_2}}$ which is closest to $v$ in wall-distance is an inner subdivision of $Z_{i,j}$. Note that the vertices which are not enclosed by $Z_{i,j}$ span a connected subgraph of $W$ (this will be important later on). A block $Z_{i,j}$ will be called \textit{bridged} if there is some leap in $\mathcal L'$ both of whose endpoints are enclosed by $Z_{i,j}$; in this situation, we will also say that this leap of $\mathcal L'$ constitutes a \textit{bridge}. There are now two main cases to consider:

\noindent\textbf{Case 1:} Each of the blocks is bridged. 

\noindent In this case, the graph $G_{\mathcal L}$ must have the crossed-$\boxtimes(n)\times\boxtimes(n)$-grid as a minor. Indeed, each of the $(\boxtimes (n)-1)^2$ bridged blocks, together with its corresponding bridge, can be used to replicate one of the $(\boxtimes(n)-1)^2$ cells of the crossed grid, along with its two crossing diagonals. This step is what drove us to define the inner subdivisions and the enclosed vertices as we did above: for each bridged block, the vertices near its boundary do not interact with the corresponding bridge. Afterwards, the fact that each $\overline{W_{i,j}}$ is a subdivision of a wall of size at least $7\times 7$ leaves us with more than enough room to connect these bridged blocks using only edges and vertices of $W$ so that they form a mesh, thus obtaining a crossed-$\boxtimes(n)\times\boxtimes(n)$-grid.
 
\noindent\textbf{Case 2:} There exists at least one block $Z_{i,j}$ which is not bridged.

\noindent We will again conclude that the crossed-$\boxtimes(n)\times\boxtimes(n)$-grid is a minor of $G_{\mathcal L}$, but the argument is more complicated in this case.
Recall that, by our choice of $a$, $b$ and $q$, every subdivision $\overline{W_{i',j'}}$ in $Z_{i,j}$ is $\mathcal L'$-connected. Since the leaps in $\mathcal L'$ are pairwise vertex-disjoint and $Z_{i,j}$ is not bridged, we can choose a set of leaps $\mathcal L_Z\subseteq \mathcal L'$ such that:
\begin{itemize}
    \item no two leaps in $\mathcal L_Z$ share a vertex;

    \item for each leap in $\mathcal L_{Z}$, one of its endpoints belongs to some inner subdivision $\overline{W_{i',j'}}$ of $Z_{i,j}$, while its other endpoint is a vertex of $W$ not enclosed by $Z_{i,j}$;

    \item for every inner subdivision $\overline{W_{i',j'}}$ of $Z_{i,j}$, there is some leap in $\mathcal L_Z$ with an endpoint in $\overline{W_{i',j'}}$.
\end{itemize}

Next, we further classify the inner subdivisions of $Z_{i,j}$ into \textit{sub-blocks} $X_{i',j'}$, where $1\leq i',j'\leq  \boxtimes(n)-1$. The sub-block $X_{i',j'}$ consists of all those subdivisions $\overline{W_{a+qk_1,b+qk_2}}$ with \[\frac{(i-1)K}{\boxtimes(n)-1}+1+(i'-1)Q\leq k_1\leq\frac{(i-1)K}{\boxtimes(n)-1}+i'Q\] and \[\frac{(j-1)K}{\boxtimes(n)-1}+1+(j'-1)Q\leq k_1\leq\frac{(j-1)K}{\boxtimes(n)-1}+j'Q\,.\] The sub-blocks of $Z_{i,j}$ form a $(\boxtimes(n)-1)\times (\boxtimes(n)-1)$ array, and each such sub-block consists of a $Q\times Q$ arrangement of subdivisions of the form $\overline{W_{a+qk_1,b+qk_2}}$. We remark that these sub-blocks do not quite form a partition of $Z_{i,j}$, since the subdivisions on the outer-most layer of $Z_{i,j}$ do not belong to any of the sub-blocks. Similarly as before, we say that a subdivision $\overline{W_{a+qk_1,b+qk_2}}$ is an \textit{inner} subdivision of $X_{i','j}$ if it belongs to this sub-block but is not part of the the outer-most layer of subdivisions within $X_{i',j'}$; there are $(Q-2)^2$ such subdivisions. Let $\mathcal L_{Z,\operatorname{in}}\subseteq\mathcal L_Z$ denote the set of leaps in $\mathcal L_{Z}$ which have an endpoint in some inner subdivision of a sub-block of $Z_{i,j}$.

Consider the subgraph $W_{\neg Z}$ of $W$ spanned by all those vertices which are not enclosed by $Z_{i,j}$. As we mentioned earlier, this graph is connected. Furthermore, every leap in $\mathcal L_{Z,\operatorname{in}}$ has an endpoint in $W_{\neg Z}$. Each vertex of $W_{\neg Z}$ that is an endpoint of some leap $P\in\mathcal L_{Z,\operatorname{in}}$ will be colored with one of $C:=(\boxtimes(n)-1)^2$ colors, depending on which of the $C$ sub-blocks $X_{i',j'}$ of $Z_{i,j}$ contains the other endpoint of $P$. Note that the $C$ sets of vertices that have been colored with each of the $C$ colors are pairwise disjoint, and each of them contain at least $(Q-2)^2$ elements. Since \[(Q-2)^2\geq2+(D-1) (C-1)\,,\] Proposition~\ref{lem:disjoint_leaps_2} implies the existence of $C$ vertex-disjoint paths $P_1,\dots,P_C$ in $W_{\neg Z}$ such that both endpoints of $P_i$ are of color $i$ (for all $1\leq i\leq C$).\footnote{These paths are not to be confused with the paths $P_i$ $(1\leq i\leq M^2)$ which appeared earlier in the proof.} See the picture on the left of Figure~\ref{fig:blocks_bridges_colors}.
\begin{figure}[ht]
    \centering
    \includegraphics[width=.95\linewidth]{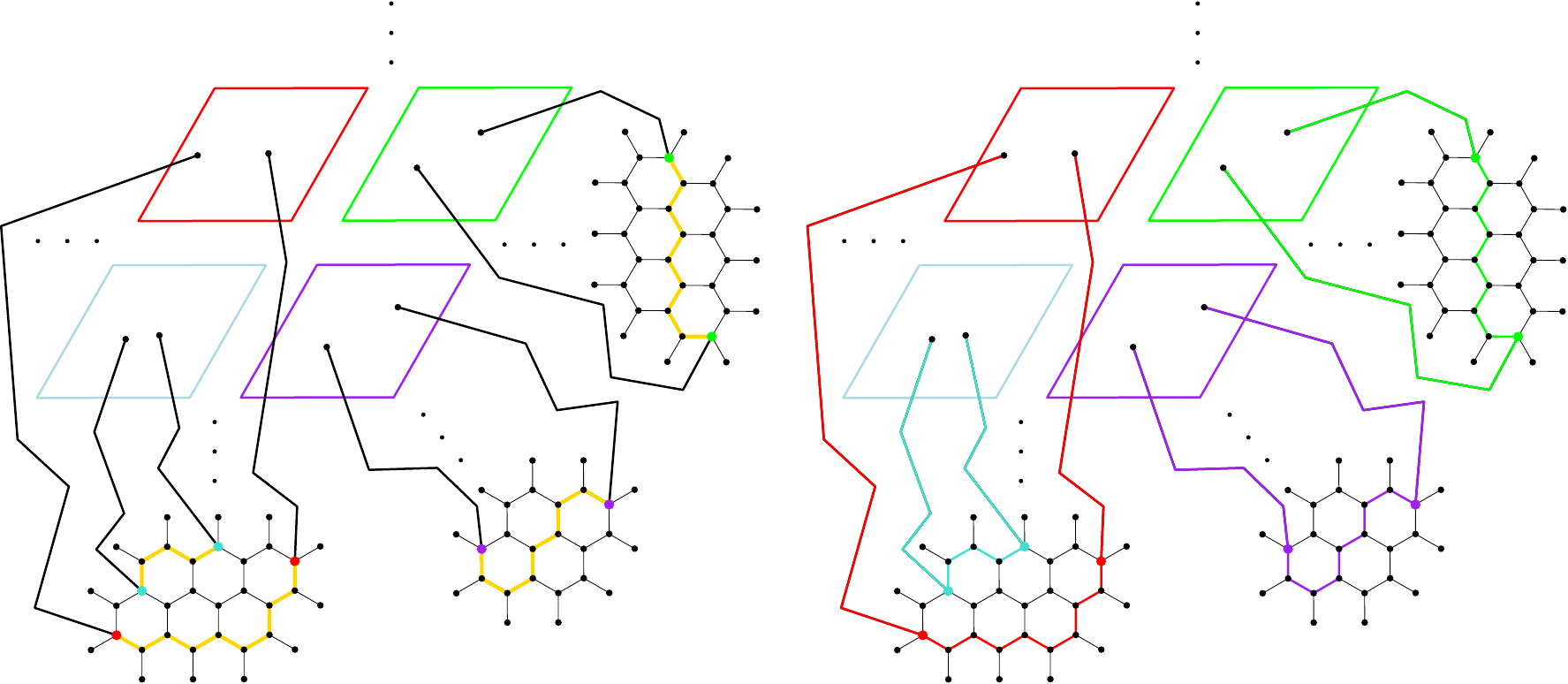}
    \caption{The picture on the left shows four sub-blocks (highlighted using different colors) which are connected to some vertices of $W_{\neg Z}$ via leaps from $\mathcal L_{Z,\operatorname{in}}$ (in black). The endpoints of these leaps that belong to $W_{\neg Z}$ have been colored according to which of the sub-blocks contains the other endpoint of the leap. Furthermore, some vertex-disjoint paths, which can be obtained by applying Proposition~\ref{lem:disjoint_leaps_2}, are shown in yellow. Note that each of these yellow paths connects two vertices of the same color. On the right, we have constructed a sort of bridge for each of the sub-blocks. Each of these bridges is obtained by concatenating three paths: a couple of leaps from $\mathcal L_{Z,\operatorname{in}}$, and the corresponding yellow path.}
    \label{fig:blocks_bridges_colors}
\end{figure}

For each sub-block $X_{i',j'}$ of $Z_{i,j}$, we let $c(i',j')$ denote the color corresponding to this block, and then construct a path $P_{i',j'}$ by concatenating three smaller paths: the path $P_{c(i',j')}$ and the two distinct leaps in $\mathcal L_{Z,\operatorname{in}}$ which connect the endpoints of $P_{c(i',j')}$ to distinct vertices within $X_{i',j'}$. The endpoints of the path $P_{i',j'}$ belong to some inner subdivisions of $X_{i',j'}$ and, apart from these two vertices, the path does not go through any vertex not enclosed by $Z_{i,j}$. Moreover, for any two distinct sub-blocks $X_{i',j'}$ and $X_{i'',j''}$ of $Z_{i,j}$, the paths $P_{i',j'}$ and $P_{i'',j''}$ are vertex-disjoint. Hence, we can think of the paths of the form $P_{i',j'}$ as disjoint bridges connecting pairs of points in the same sub-block; see the picture on the right of Figure~\ref{fig:blocks_bridges_colors}. Since there are $(\boxtimes(n)-1)^2$ sub-blocks, and they are arranged in a $(\boxtimes(n)-1)\times(\boxtimes(n)-1)$-grid-like manner, this is essentially the same situation that we encountered during Case 1. In particular, we can use the sub-blocks of $Z_{i,j}$ and these bridges to replicate the cells and diagonal edges of a crossed-$\boxtimes(n)\times\boxtimes(n)$-grid within $W$.

In either case, we conclude that the crossed-$\boxtimes(n)\times \boxtimes(n)$-grid is a minor of $G_\mathcal L$. Since $H$ is in turn a minor of this crossed grid (by Proposition~\ref{prop:crossed-grid_minors}), it must be the case that $H$ is a minor of $G_\mathcal L$. However, $G_{\mathcal L}$ is a subgraph of $G$, which is $H$-minor-free. This contradiction shows that the family of leaps $\mathcal L$ cannot exist, and thus $W$ cannot be contained within a ball of radius $r$ of $G_{D,\alpha}[\omega]$. This finishes the proof.
\end{proof}

\section{Proving soficity}\label{sec:limits}

In this section, we conclude the proof of Theorem~\ref{thm:main}. Let us briefly recapitulate what we have done up to this point.

At the start, we are given a unimodular random rooted graph $(G,\rho)$ with law $\mu$ which is a.s.\ one-ended and $H$-minor-free. Then, we took a site percolation $\xi_D$ which removes all vertices of degree larger than $D$ and considered some parameter $\alpha>0$. We applied Theorem~\ref{thm:non-amenable} to obtain another site percolation $\omega_\alpha$ of rate at most $\alpha$ which ensures that every connected component of the remaining graph, $G_{D,\alpha}=G_{\xi_D}[\omega_\alpha]$, is either finite or satisfies a certain isoperimetric inequality. We will refer to the percolation $\xi_D\cap\omega_{\alpha}$ simply as $\xi$ from now on. We chose two parameters $\varepsilon\in(0,1/2)$ and $N$ and, using Theorem~\ref{thm:filaments}, we constructed yet another site percolation, called $\omega$, of rate at most $\varepsilon$. We consider one last parameter $r$. By Theorem~\ref{thm:locally_treelike}, if $N$ is chosen appropriately (i.e., as long as $N\geq N(r)$), then every connected component of the graph $G_{D,\alpha}[\omega]$ which lies in an infinite component of $G_{D,\alpha}$ will be $(t(\varepsilon,n),r)$-locally-hin, where $n$ is the order of $H$ and $t$ is as given by the theorem. With this, we are ready to jump into the final argument in our proof.

\begin{proof}[Proof of Theorem~\ref{thm:main}]
Note, crucially, that $t(\varepsilon,n)$ is independent of $r$. This allows us to make $r$ (and $N$) as large as we want while keeping $\varepsilon$ and $t$ fixed. A standard compactness argument shows that, by letting $r\rightarrow\infty$ and passing to a subsequence, we can assume that the percolation $\omega$ converges to a limiting percolation $\omega_{\varepsilon,\infty}$. By this, we mean that the unimodular, random, rooted marked graph $(G,\rho,\xi,\omega)$ converges in the Benjamini-Schramm sense to $(G,\rho,\xi,\omega_{\varepsilon,\infty})$. Informally, this percolation $\omega_{\varepsilon,\infty}$ corresponds to the \textit{super-filaments} mentioned in Section~\ref{subsec:about-proofs}.

We claim that a.s.\ every connected component of $G_{D,\alpha}[\omega_{\varepsilon,\infty}]$ is either finite or has treewidth at most $t$. By theorems~\ref{lem:everything_root} and~\ref{thm:treewidth-walls}, it suffices to prove that the probability that the root $\rho$ belongs to a subdivision of a $g(t)\times g(t)$-wall which is in turn contained in an infinite connected component of $G_{D,\alpha}[\omega_{\varepsilon,\infty}]$ is $0$. Fix $G_{D,\alpha}$ and $\rho$ and note that, if this event were to occur, then $\rho$ would clearly also lie within an infinite connected component of $G_{D,\alpha}$. In this situation, the cluster of $\rho$ in $G_{D,\alpha}[\omega]$ must be $(t,r)$-locally-thin a.s.. Suppose there is some subgraph $W$ of $G_{D,\alpha}$ which contains $\rho$ and is isomorphic to a subdivision of $W_{g(t)\times g(t)}$. If $r$ is sufficiently large (say, larger than the order of $W$), then the probability---with respect to the choice of $\omega$---that $W$ is a subgraph of $G_{D,\alpha}[\omega]$ will be $0$, and thus the same must be true for $G_{D,\alpha}[\omega_{\varepsilon,\infty}]$. Taking a union bound over the (at most) countably many choices of $W$, we conclude that a.s.\ no such $W$ will be completely open with respect to $\omega_{\varepsilon,\infty}$. The claim follows.

Now, let $(G_\varepsilon,\rho_\varepsilon)$ denote the random rooted graph obtained from $(G,\rho,\xi,\omega_{\varepsilon,\infty})$ by conditioning on the event that $\rho$ is open with respect to both $\xi$ and $\omega_{\varepsilon,\infty}$, setting $\rho'=\rho$, and then letting $G_\varepsilon$ be the cluster $K_{\omega_{\varepsilon,\infty}}(\rho)$ of $\rho$ in $G_{D,\alpha}[\omega_{\varepsilon,\infty}]$. Denote the law of $(G_\varepsilon,\rho_\varepsilon)$ as $\mu_\varepsilon$. If $G'$ is finite a.s., then we immediately obtain that $(G',\rho')$ is sofic. Otherwise, we let $(G_\varepsilon',\rho_\varepsilon')$ be defined in the same way as $(G_\varepsilon,\rho_\varepsilon)$, except that we further condition on the event that the root belongs to an infinite cluster. By our above discussion, we have that $G_\varepsilon'$ a.s.\ has treewidth at most $t$. Hence, by Theorem~\ref{thm:treewidth-treeable}, $(G_\varepsilon',\rho_\varepsilon')$ must be treeable, and thus sofic. This also implies that $(G_\varepsilon,\rho_\varepsilon)$ is sofic. Indeed, we can write the law of $(G_\varepsilon,\rho_\varepsilon)$ as a mixture of two laws; one of these laws is sofic because it is supported on finite graphs, and the other one is simply the law of the treeable unimodular graph $(G_\varepsilon',\rho')$, which is also sofic. Soficity is preserved under taking mixtures.

Now  , we let $\varepsilon\rightarrow 0$. By Proposition~\ref{prop:percolation-stability}, the measure $\mu_{\varepsilon}$ will converge to $\mu_{\xi}$ in the the Benjamini-Schramm sense (recall that $\mu_\xi$ is the measure of the random rooted graph obtained from $(G,\rho)$ by conditioning on the event that $\rho$ is open with respect to $\xi$ and then looking at its cluster within this percolation). Since each $\mu_\varepsilon$ is sofic, the same must be true of $\mu_\xi$. Next, we let $\alpha\rightarrow 0$. Again by Proposition~\ref{prop:percolation-stability}, $\mu_\xi$ will converge to $\mu_{\xi_D}$, which must then also be sofic. Finally, we can let $D\rightarrow\infty$ to conclude that $\mu$ itself is sofic. In other words, the unimodular random rooted graph $(G,\rho)$ is sofic. 
\end{proof}

\section{Transitivity, tree decompositions, and soficity}\label{sec:transitive}

\subsection{Automorphisms, canonical tree decompositions, and separations}\label{subsec:transitive_definitions}
For a graph $G$, we denote by $\operatorname{Aut}(G)$ the automorphism group of $G$. A graph $G=(V,E)$ is said to be \textit{transitive} if, for any two vertices $x,y\in V$, there exists an automorphism of $G$ which maps $x$ to $y$. Similarly, $G$ is \textit{quasi-transitive} if the action of $\operatorname{Aut}(G)$ on $V$ has only finitely many orbits. Clearly, every Cayley graph is transitive, but there are transitive graphs which cannot be obtained as Cayley graphs of any group (see~\cite{eskin2012coarse} for a stronger result in this direction). Although this might initially appear surprising, there exist transitive graphs which are not unimodular (in the sense that the random rooted graph which a.s.\ equals this transitive graph rooted at an arbitrary vertex is not unimodular). We refer the reader to the book of Lyons and Peres~\cite[Chapter 8]{lyons2017probability} for a careful treatment of unimodularity in the setting of transitive graphs. 

A tree decomposition $(T,\mathcal V)$ of a quasi-transitive graph $G$ is said to be \textit{canonical} if $\operatorname{Aut}(G)$ induces a group action on $T$ such that, for every $\gamma\in \operatorname{Aut}(G)$, $V_t\cdot \gamma=V_{t\cdot \gamma}$. In other words, $(T,\mathcal V)$ is invariant under the action of $\gamma$, which maps bags of $(T,\mathcal V)$ to bags of the same decomposition. Note that $t\mapsto t\cdot \gamma$ must be an automorphism of $T$ for every $\gamma$ in $\operatorname{Aut}(G)$. 

A \textit{separation} of a connected graph $G=(V,E)$ is an ordered pair $(Y,Z)$ of subsets of $V$ such that $V=Y\cup Z$ and which satisfy the property that there is no edge of $G$ from $Y\backslash Z$ to $Z\backslash Y$. The set $S=Y\cap Z$ will be called the \textit{separator} of $(Y,Z)$. The separation is called \textit{proper} if at least one of the sets $Y$ and $Z$ is non-empty. Given a tree decomposition $(T,\mathcal V)$ of $G$, there is a proper separation of $G$ associated to every (oriented) edge $(t_1,t_2)$ of $T$. This separation is obtained by letting $T_1$ and $T_2$ be the connected components of $T-(t_1,t_2)$ which contain $t_1$ and $t_2$, respectively, and then taking $(Y_1,Y_2)$, where $Y_{i}=\bigcup_{t\in V(T_i)}V_{t}$ for $i\in\{1,2\}$. The separator corresponding to this separation will be $S=V_{t_1}\cap V_{t_2}$. Note that a tree decomposition can be recovered entirely from the collection of separations induced by all of its edges. 

Let us also describe a different way in which a separation can be encoded, which is actually the one we will use from now on. This alternative definition, which seems to have been introduced in~\cite{jardon2023applications} in order to study tree decompositions of Borel graphs (see the next section), has the advantage that it requires only local information. A \textit{separation} of $G=(V,E)$ can be encoded as a pair $(S,B)$, where $S$ is a finite subset of $V$, and $B\subset \partial_{E}^GS$ is a set of edges such that the following property holds: Suppose that $(u,v)\in \partial_{E}^G S$ and $(u',v')\subset B$ are edges with $u,u'\in S$ and $v,v'\not\in S$, and that $v$ and $v'$ lie in the same connected component of $G\backslash S$. Then, $(u,v)\in B$. Once again, $S$ will be called the \textit{separator} of $(S,B)$. It is straightforward to translate back and forth between the two definitions of a separation that we have given (see~\cite[Remark 2.2]{jardon2023applications} for details), and the separator $S$ is independent of the encoding being used. 

Suppose we are given a canonical tree decomposition $(T,\mathcal V)$ of some quasi-transitive, connected, locally finite graph $G$. Furthermore, assume that every vertex belongs only to finitely many bags from $\{V_t\}_{t\in V(T)}$. How can we encode $(T,\mathcal V)$ in a way that resembles a marking? It is tempting to do this simply by storing at every vertex $v$ of $G$ the collection $\mathcal S_v$ of all separations induced by $(T,\mathcal V)$ which have $v$ in their corresponding separator. Since every vertex belongs to finitely many bags, a finite amount of information must be stored at every vertex. While this encoding can be very complicated for general tree decompositions, it turns out to be quite nice in the canonical case. Indeed, the whole encoding can be recovered simply by knowing the information stored at one representative vertex from every orbit of vertices in $G$ (with respect to the action of $\operatorname{Aut}(G)$). To see this, consider two vertices $u$ and $v$ of $G$ for which there is some $\gamma\in \operatorname{Aut}(G)$ with $u\cdot \gamma=v$. For each bag $V_t$ that contains $u$, the bag $V_{t}\cdot \gamma=V_{t\cdot\gamma}$ contains $v$. Similarly, if some bag $V_t$ contains $v$, then the bag $V_t\cdot\gamma^{-1}=V_{t\cdot\gamma^{-1}}$ contains $u$. Hence, the set $\mathcal S_{v}$ can be obtained from $\mathcal S_u$ simply by letting $\gamma$ act from the right on each of the separations contained in $\mathcal S_u$.

\subsection{Borel graphs and graphings}\label{subsec:graphings}
A \textit{Borel graph} consists of a Borel-measurable subset $\mathcal G\subseteq X\times X$, where $X$ is some standard Borel space. In this definition, one can think of $X$ and $\mathcal G$ as the sets of vertices and edges of the Borel graphs, respectively. Each Borel graph induces an equivalence relation $\operatorname{Rel}(\mathcal G)$ on its vertex space $X$, where two vertices belong to the same equivalence class if and only if they belong to the same connected component with respect to $\mathcal G$.

A locally finite Borel graph $\mathcal G$ defined on a standard Borel space $X$ equipped with a probability measure $\lambda$ is said to be a \textit{graphing} if, for every two measurable sets $A,B\subseteq X$, 
\begin{equation}\label{eq:graphing}
    \int_A\operatorname{deg_B}(x)d\lambda(x)=\int_B\operatorname{deg_A}(x)d\lambda(x)\,,
\end{equation}
where $\operatorname{deg}_A(x)$ and $\operatorname{deg}_B(x)$ denote the number of edges joining $x$ to $A$ and $B$, respectively. It is not hard to see that both integrals above are well defined. Given a graphing $\mathcal G$ on $(X,\lambda)$, one can obtain a random rooted graph $(G,\rho)$ by letting the root be a random point $\rho\in X$ chosen according to the measure $\lambda$, and then taking the subgraph $G$ of $\mathcal G$ induced by the connected component of $\rho$. Under this correspondence, it turns out that~\eqref{eq:graphing} is actually equivalent to the mass transport principle~\eqref{eq:MTP}, so every random rooted graph produced in this manner is unimodular. Furthermore, every unimodular random rooted graph can be represented as a graphing (although not in a unique way). We refer the reader to the book of Lov{\'a}sz~\cite[Chapter 18 and, in particular, Theorem 18.37]{lovaszlimits} for the proofs of these statements and further details regarding the correspondence between unimodular graphs and graphings.

The equivalence relation $\operatorname{Rel}(\mathcal G)$ induced by a Borel graph $\mathcal G$ on a space $X$ is said to be \textit{Borel treeable} if there exists some acyclic Borel graph $\mathcal T$ with vertex set $X$ such that $\operatorname{Rel}(\mathcal G)=\operatorname{Rel}(\mathcal T)$. Furthermore, for a probability measure $\mu$ on $X$, $\operatorname{Rel}(\mathcal G)$ is said to be $\mu$\textit{-treeable} if there exists a set $X_0\subset X$ with $\mu(X_0)=1$ so that the restriction of $\operatorname{Rel}(\mathcal G)$ to $X_0$ is Borel treeable. We say that $\operatorname{Rel}(G)$ is \textit{measure treeable} if it is $\mu$-treeable for every Borel probability measure $\mu$ on $X$. Suppose that $(G,\rho)$ is a unimodular random rooted graph and $\mathcal G$ is a graphing which represents it on a probability space $(X,\lambda)$. Then, if $\operatorname{Rel}(\mathcal G)$ is $\lambda$-treeable, $(G,\rho)$ must be treeable too (as defined in Section~\ref{subsec:treeable}).

Given a standard Borel space $A$, we denote by $[A]^{\leq n}$ the standard Borel space of all subsets of $A$ with cardinality at most $n$, and we let $[A]^{<\infty}:=\bigcup_{n=1}^\infty [A]^{\leq n}$. A collection $\{(T_i,\mathcal V_i)\}_{i\in \mathcal I}$ containing one tree decomposition of each of the connected components of a Borel graph $\mathcal G$ on $X$ will be said to be Borel if the set of separations it induces is a Borel subset of $[X]^{<\infty}\times[\mathcal G]^{<\infty}$, (recall that we think of separations as pairs $(S,B)$). If the adhesion of each tree decomposition in this collection is bounded by a constant, then we can equip the set \[Z:=\left\{(x,t)\in X\times \bigcup_{i\in \mathcal I}V(T_i)\mid x\in V_t\right\}\] with a standard Borel space structure in a natural way (see Proposition 3.1 in~\cite{jardon2023applications}). In this setting, we define the \textit{levels equivalence relation} $Q$ of $\{(T_i,\mathcal V_i)\}_{i\in \mathcal I}$ as the equivalence relation on $Z$ with $(x,t)\sim_Q(y,t')$ if and only if $t=t'$. The equivalence classes of $Q$ will be called \textit{levels}. We remark that, in~\cite{jardon2023applications}, the levels equivalence relation is actually defined not for tree decompositions, but rather in terms of separation systems satisfying a certain property called $(*)$. However, Proposition 2.6 in that same paper implies that these definitions carry over to any Borel collection of tree decompositions with bounded adhesion. 

\subsection{Relevant results}
Let us now go over some results from~\cite{jardon2023applications} and~\cite{esperet2024structure} regarding treeability, tree decompostions, and the structure of quasi-transitive graphs excluding some minor.

Recall the definition of accessibility given in Section~\ref{subsec:ends}. The first result, due to Esperet, Giocanti, and Legrand-Duchesne, tells us that every quasi-transitive, minor-excluded graph is accessible.

\begin{theorem}[Theorem 1.4\cite{esperet2024structure}]\label{thm:accessible_minor-free}
    Every locally finite quasi-transitive graph which does not have the infinite clique $K_\infty$ as a minor is accessible.
\end{theorem}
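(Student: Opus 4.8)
The plan is to argue by contradiction: assuming that $G$ is locally finite, quasi-transitive, $K_\infty$-minor-free and \emph{not} accessible, I would build an infinite-clique minor of $G$, contradicting the hypothesis. A few harmless reductions come first. If $G$ has at most two ends it is accessible with bound $2$, so I may assume $G$ has infinitely many ends; since it is classical that a quasi-transitive graph has $0$, $1$, $2$ or infinitely many ends, this is the only case left. As $\operatorname{Aut}(G)$ has finitely many orbits on $V$, passing between the quasi-transitive and transitive settings costs only a bounded factor in everything below, so I may as well pretend $G$ is transitive. Non-accessibility now says: for every $k$ there are distinct ends $\omega_k,\omega_k'$ that no set of fewer than $k$ vertices separates. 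The first real step is to upgrade each such pair to a \emph{tight} witness: take a separation $(A_k,B_k)$ whose separator $S_k=A_k\cap B_k$ has the smallest size $m_k\ge k$ among separations separating $\omega_k$ from $\omega_k'$. Then $S_k$ is inclusion-minimal, and by a Menger-type theorem for ends (Halin) there are $m_k$ pairwise-disjoint double rays running from ``deep in $A_k$'' to ``deep in $B_k$'', each meeting $S_k$ in exactly one vertex; so the crossing structure at $S_k$ is genuinely $m_k$-connected.

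The heart of the proof is to convert this divergent sequence of tight, highly connected interfaces into a single $K_\infty$ minor, and this is where quasi-transitivity does the work. I would run a compactness/K\"onig-lemma extraction on the rooted balls realizing these interfaces: since only finitely many rooted balls of each radius occur up to isomorphism, and the ``sides'' $A_k\setminus B_k$ look locally like $G$ itself, one can arrange the configurations to nest coherently inside a single copy of $G$, producing an infinite family of separations $S_1,S_2,\dots$ with $|S_i|\to\infty$ that are pairwise nested along a ray, are translates of one another under $\operatorname{Aut}(G)$, and between consecutive ones carry the full $|S_i|$-connected crossing structure. Because each $S_i$ is \emph{tight}, every vertex of $S_{i+1}$ ``reaches past'' $S_i$ on both sides, and it is precisely this slack that lets one reroute paths through the deeper regions; weaving together automorphic copies of the crossing structures at successive scales should then exhibit pairwise-adjacent connected branch sets in abundance, i.e.\ a $K_\infty$ minor. (If the construction is more naturally phrased scale by scale one instead gets $K_n$ as a minor for every finite $n$; for a quasi-transitive graph this already forces a $K_\infty$ minor, which I would record as a short lemma proved by a fusion argument on automorphic copies of finite minor-models.) A cleaner repackaging uses Dunwoody's $\operatorname{Aut}(G)$-invariant structure trees built from optimal edge cuts together with the Thomassen--Woess accessibility criterion (accessibility is equivalent to a uniform bound on the order of the cuts arising in the iterated Dunwoody decomposition): an unbounded such process is exactly the divergent family above, from which one again extracts the $K_\infty$ minor.

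I expect the genuine obstacle to be exactly this conversion step. The naive move, translating one large separator by an automorphism pushing toward an end, only yields a planar ``infinite ladder of width $m$'', which contains no $K_\infty$; the non-planarity must be squeezed out of the \emph{optimality/tightness} of the separators, by showing that inequivalent optimal separations of large order are forced to ``cross'', and that crossings occurring at unboundedly large orders, once replicated by the automorphism group, can be braided into the required minor. Making this rigorous presumably needs the full Dunwoody--Kr\"on machinery of nested cut systems together with a careful amalgamation over the finitely many orbits of automorphic copies, and this is where I would expect the essential use of both quasi-transitivity and $K_\infty$-minor-freeness to sit. The surrounding steps (the reduction to the transitive, infinitely-ended case, the Menger/Halin input, and the ``$K_\infty$ versus all finite $K_n$'' lemma) should be comparatively routine.
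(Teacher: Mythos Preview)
This theorem is not proved in the paper at all: it is quoted verbatim as Theorem~1.4 of Esperet--Giocanti--Legrand-Duchesne~\cite{esperet2024structure} and used as a black box in the proof of Theorem~\ref{thm:main-transitive}. There is therefore no ``paper's own proof'' to compare against.

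As for your sketch on its own merits: the overall architecture (contrapositive; tight separators of unbounded order via Halin--Menger for ends; quasi-transitivity to extract a coherent nested family; braid these into a $K_\infty$ minor) is a plausible line of attack, and you are honest that the crucial step---turning unboundedly large tight separators into an actual $K_\infty$ minor---is where the real content lies. But as written this is an outline with a declared gap, not a proof: you yourself say the conversion ``presumably needs the full Dunwoody--Kr\"on machinery'' and that you ``expect the genuine obstacle to be exactly this conversion step.'' In particular, your observation that naively translating a single large separator only yields a ladder is exactly right, and nothing in the sketch explains concretely how tightness forces the crossings you need, nor how to assemble those crossings into branch sets that are pairwise adjacent. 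The side lemma ``$K_n$ minor for all $n$ implies $K_\infty$ minor in a quasi-transitive graph'' is also asserted rather than proved; it is true but not entirely trivial.

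For context, the route actually taken in~\cite{esperet2024structure} is structural rather than the direct contradiction you propose: they build the canonical tree decomposition of Theorem~\ref{thm:planar_torsos} (adhesion at most~$3$, torsos planar or of bounded treewidth), and accessibility drops out because bounded adhesion in such a decomposition immediately bounds the size of end-separating sets. So your strategy, even if it could be completed, would be a genuinely different and more bare-hands argument than the one in the cited source.
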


In that same paper, it was also proven that quasi-transitive graphs excluding some minor admit highly structured tree decompositions. This is the content of the following theorem. 

\begin{theorem}[Theorem 4.3~\cite{esperet2024structure}]\label{thm:planar_torsos}
    Let $G$ be a quasi-transitive graph which does not have $K_{\infty}$ as a minor. Then, there exists some positive integer $m$ such that $G$ admits a canonical tree decomposition $(T,\mathcal V)$ with $\mathcal V=\{V_t\}_{t\in V(T)}$, of adhesion at most $3$, and whose torsos $G[V_t]$ are quasi-transitive, connected, and are either planar or have treewidth at most $m$.
\end{theorem}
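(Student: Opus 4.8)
The plan is to produce the decomposition by iterating canonical splittings along separators of small order and then analysing the individual torsos, with the quasi-transitivity hypothesis entering at two key points.

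\emph{Step 1: a canonical decomposition of adhesion at most $3$ into quasi-$4$-connected torsos.} Every connected locally finite graph admits a canonical tree decomposition $(T,\mathcal V)$ of adhesion at most $3$ whose torsos are single edges, cycles, wheels, $K_4$, $K_5-e$, or \emph{quasi-$4$-connected} (every separation of order at most $3$ has a side of bounded size): one composes the canonical block--cut decomposition (adhesion at most $1$), Tutte's canonical decomposition of a $2$-connected graph into $3$-connected parts and cycles (adhesion at most $2$), and the canonical decomposition of a $3$-connected graph along its $3$-separations (adhesion at most $3$). Each step is defined purely in graph-theoretic terms, so the composite is invariant under $\operatorname{Aut}(G)$, and each extends to the infinite locally finite setting (accessibility, Theorem~\ref{thm:accessible_minor-free}, is what guarantees that a bounded-adhesion decomposition can capture the global end-structure of $G$). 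Canonicity yields an action of $\operatorname{Aut}(G)$ on $T$ with $V_t\cdot\gamma=V_{t\cdot\gamma}$; using local finiteness — so that each vertex of $G$ lies in finitely many bags and $T$ is locally finite — together with quasi-transitivity of $G$, one shows the action on $V(T)$ has finitely many orbits, and, with more work, that for every node $t$ the stabiliser $\operatorname{Aut}(G)_t$ acts on the torso $G_t$ with finitely many orbits; hence every torso is quasi-transitive. Each torso is connected because $G$ is, and each torso is a minor of $G$ (a standard property of tight tree decompositions: contract the branch of $T$ hanging off each adhesion set), so no torso has $K_\infty$ as a minor.

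\emph{Step 2: a quasi-$4$-connected quasi-transitive graph without a $K_\infty$ minor is planar or has bounded treewidth.} This is the core. Fix such a graph $G'$; if its treewidth is finite we are done, so assume it is infinite. Then, by the Grid-Minor Theorem (Theorem~\ref{thm:treewidth-walls}, applied inside larger and larger balls), $G'$ contains subdivisions of arbitrarily large walls, and I claim it must in fact be planar. If not, $G'$ contains a subdivision of $K_5$ or $K_{3,3}$ confined to some finite ball. Using quasi-transitivity, translate this obstruction by automorphisms so that arbitrarily many pairwise far-apart copies appear, one near each of many distinct cells of a huge wall subdivision $W$ of $G'$; quasi-$4$-connectedness is precisely what prevents any translate from hiding behind a bounded separator, so each genuinely links to $W$, and — as with the ``leaps'' of Section~\ref{sec:local-treewidth} — each can be routed through $W$ to yield one crossing without disturbing the others. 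A large wall carrying enough independent crossings contains arbitrarily large crossed grids, hence (Proposition~\ref{prop:crossed-grid_minors}) arbitrarily large cliques, and iterating this one builds a $K_\infty$ minor of $G'$ — a contradiction. (Alternatively one may split on the Hadwiger number of $G'$: if it is bounded, apply the Robertson--Seymour structure theorem~\cite{robertson2003graph} and argue that a quasi-$4$-connected quasi-transitive almost-embeddable graph of unbounded treewidth is planar; if it is unbounded, run the same pumping argument directly on large clique minors.)

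\emph{Step 3: assembly.} Combining Steps 1 and 2, every torso of $(T,\mathcal V)$ is planar or has bounded treewidth, while the cycle, wheel, $K_4$, $K_5-e$ and single-edge torsos are planar or bounded-treewidth outright. Since $\operatorname{Aut}(G)$ has finitely many orbits on $V(T)$ there are, up to isomorphism, only finitely many torsos, so we may take $m$ to be the largest treewidth occurring among the (finitely many) non-planar torso types; this single constant $m$ completes the proof. The step I expect to be genuinely hard is the pumping argument in Step 2 — amplifying one crossing into a $K_\infty$ minor via automorphisms while keeping all the translated obstructions vertex-disjoint and correctly routed through the wall — together with the bookkeeping in Step 1 needed to show the torsos of the canonical decomposition remain quasi-transitive; these are exactly the places where quasi-transitivity is indispensable, and neither is formal.
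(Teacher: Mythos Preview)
The paper does not prove this theorem: it is quoted verbatim as Theorem~4.3 of \cite{esperet2024structure} and used as a black box (the remark following the statement only notes two mild strengthenings---connectedness of torsos and finiteness of the number of bags containing any vertex---that can be read off from the original proof). There is therefore no ``paper's own proof'' to compare against.

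That said, your outline is broadly consonant with the actual strategy of \cite{esperet2024structure}: a canonical tree decomposition of adhesion at most~$3$ is built by composing the block--cut decomposition, Tutte's decomposition into $3$-blocks, and a Grohe-style decomposition of $3$-connected graphs along $3$-separations into quasi-$4$-connected pieces, and quasi-transitivity is pushed down to the torsos. One correction to your Step~1: it is \emph{not} automatic that each vertex lies in finitely many bags---the paper's remark explicitly says this may fail and explains how to fix it orbit by orbit---so you should not invoke that as an input. Your Step~2 is the genuinely delicate part, and your sketch is honest about this; the ``pumping'' argument (using quasi-transitivity to replicate a Kuratowski obstruction near many cells of a large wall, then routing through the wall to build arbitrarily large clique minors) is the right shape, but turning this into an actual proof---controlling disjointness of the translated obstructions and their attachment to the wall, and making the quasi-$4$-connectivity hypothesis do real work---is exactly where the effort in \cite{esperet2024structure} goes, and your paragraph is a long way from a proof.
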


\noindent\textit{Remark.} In~\cite{esperet2024structure}, this theorem is stated without the connectedness property on the torsos. However, upon inspection of the proof, the result can be strengthened in this way. We can actually assume that the tree decomposition satisfies yet another property, namely, that each vertex belongs only to finitely many bags $V_t$. If this property were not satisfied, then we could modify the decomposition by selecting one of the orbits of vertices in $G$, simultaneously removing each vertex in this orbit from infinitely many bags, and then repeating this process for each of the orbits.

\medskip

Theorem~\ref{thm:planar_torsos} suggests that quasi-transitive minor-excluded graphs are similar in structure to planar graphs. This intuition can be made precise, and it has been shown in~\cite{esperet2023coarse} that every quasi-transitive minor-excluded graph is quasi-isometric to a planar graph of bounded degree; we will not require this result, but consider it worth mentioning. 

Next, we recall two results by Jardón-Sánchez. The first one concerns the treeability of accessible planar graphs.

\begin{theorem}[Theorem 2~\cite{jardon2023applications}]\label{thm:planar_accessible_treeable}
    Suppose that $\mathcal G$ is a locally finite Borel graph whose connected components are planar and accessible. Then, $\operatorname{Rel}(\mathcal G)$ is measure treeable. 
\end{theorem}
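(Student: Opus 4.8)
The plan is to use the accessibility hypothesis to reduce, in a Borel-measurable way, to the one-ended case, and then to treat one-ended planar pieces by a duality argument. \textbf{Step 1 (a Borel canonical tree decomposition).} Since every connected component of $\mathcal{G}$ is accessible with a common constant $k$, each such component admits a tree decomposition of adhesion at most $N=N(k)$ all of whose torsos have at most one end; this is the graph-theoretic heart of the Thomassen--Woess/Dunwoody accessibility theory. Moreover, such a decomposition can be chosen \emph{canonically}, i.e.\ invariant under all automorphisms of the component (e.g.\ via the canonical tree decompositions of Carmesin--Hamann--Miraftab), and it is precisely this canonicity that lets one run the construction Borel-measurably and uniformly across all components. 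This produces a Borel collection $\{(T_i,\mathcal{V}_i)\}_{i\in\mathcal{I}}$ of tree decompositions of bounded adhesion, hence a standard Borel structure on the associated space $Z$ of levels together with the Borel levels equivalence relation $Q$ described before the statement.

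\textbf{Step 2 (reducing treeability to the torsos).} I would then isolate a gluing lemma: given a Borel collection of tree decompositions of the components of $\mathcal{G}$ with adhesion $\le N$, if the Borel graph on $Z$ whose components are the torsos has measure-treeable equivalence relation, then $\operatorname{Rel}(\mathcal{G})$ is measure treeable. This is the Borel/measured analogue of the fact that a graph of groups with treeable vertex groups and finite edge groups has treeable fundamental group: one fixes a treeing inside each torso, then along each edge of each $T_i$ uses the at most $N$ vertices of the corresponding adhesion set to splice the torso treeings together through the tree $T_i$; since only boundedly many edges are added per adhesion set, a routine Borel pruning leaves an acyclic Borel graph generating $\operatorname{Rel}(\mathcal{G})$ $\mu$-a.e.\ for every $\mu$. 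The levels equivalence relation $Q$ is the bookkeeping device making the splicing measurable.

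\textbf{Step 3 (the one-ended planar case).} By Steps 1--2 it suffices to treeabilise each torso, and each torso is now a planar graph with at most one end. A torso with no end is finite, so its relation is smooth and a fortiori treeable. The remaining case, a one-ended planar torso $G_t$, is where the genuine input lies: such graphs can have arbitrarily large treewidth (grids), so Theorem~\ref{thm:treewidth-treeable} does not apply. Here I would fix a planar embedding and pass to the dual, where one-endedness of $G_t$ becomes a usable connectivity property of the cut/cycle structure, and extract a Borel acyclic subgraph generating $\operatorname{Rel}(G_t)$ --- this is the route taken by Tim\'ar for planar graphs (building on Angel--Hutchcroft--Nachmias--Ray for maps), which the excerpt itself flags as relying on duality; when convenient one may split off an amenable part, on which the relation is hyperfinite and hence treeable, and handle the non-amenable part using the planar/hyperbolic geometry (circle packing).

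I expect the main obstacle to be twofold. First, upgrading the classical structure theory of Steps 1 and 3 to genuinely Borel, measure-uniform statements: canonicity of the tree decompositions is the crucial leverage, but one must check that the space of separations worked with is Borel and that all auxiliary choices (embeddings, circle packings, treeings) can be made measurably and without a bounded-degree assumption. Second, the one-ended planar case itself, for the reason above --- this is exactly the step that distinguishes the theorem from the bounded-treewidth results and forces the use of planar duality rather than Theorem~\ref{thm:treewidth-treeable}.
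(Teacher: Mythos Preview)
This theorem is not proved in the paper: it is quoted verbatim as Theorem~2 of~\cite{jardon2023applications} and used as a black box in the proof of Theorem~\ref{thm:main-transitive}. There is therefore no proof in the paper to compare your proposal against.

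That said, your outline is broadly consistent with the architecture the paper attributes to~\cite{jardon2023applications}. In particular, your Step~2 gluing lemma is precisely the content of Theorem~\ref{tmh:jardon} (Theorem~3.4 of~\cite{jardon2023applications}), which the paper also quotes; and the paper explicitly remarks, when discussing earlier planar results, that they ``rely on powerful tools that are available only when working with planar graphs, such as duality,'' which matches your Step~3. Your identification of the one-ended planar case as the genuine crux, not reducible to Theorem~\ref{thm:treewidth-treeable}, is correct. One caveat: in Step~3 you should be careful that the torsos of a tree decomposition of a planar graph need not themselves be planar in general (adding adhesion cliques can destroy planarity), so the accessibility decomposition in Step~1 must be chosen with some care to preserve planarity of the torsos; this is presumably handled in~\cite{jardon2023applications} but is a point your sketch glosses over.
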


The next theorem will allow us to pass from studying an entire Borel graph to studying only the levels equivalence relation of some Borel collection of tree decompositions of its connected components.

\begin{theorem}[Theorem 3.4 $+$ Proposition 2.6~\cite{jardon2023applications}]\label{tmh:jardon}
    Let $\mathcal G$ be a locally finite Borel graph on some standard Borel space $X$, and let $\{(T_i,\mathcal V_i)\}_{i\in \mathcal I}$ be a Borel collection of tree decompositions of the connected components of $\mathcal G$. Suppose there exists some positive integer $m$ such that, for every $i\in \mathcal I$, $(T_i,\mathcal V_i)$ has adhesion at most $m$. Then, if the levels equivalence relation $Q$ of $\{(T_i,\mathcal V_i)\}_{i\in \mathcal I}$ is measure treeable, so is $\operatorname{Rel}(\mathcal G)$.
\end{theorem}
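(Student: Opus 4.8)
The plan is to reduce the treeability of $\operatorname{Rel}(\mathcal G)$ to that of the levels relation $Q$ by working entirely on the auxiliary space $Z=\{(x,t):x\in V_t\}$ introduced before the statement. Recall that $Z$ carries a standard Borel structure precisely because the adhesion is bounded, and that the classes of $Q$ are the \emph{levels} $L_t:=\{(x,t):x\in V_t\}$, one per tree node, each Borel-isomorphic to the bag $V_t$ via the projection $p\colon Z\to X$, $(x,t)\mapsto x$. Alongside $Q$ I would consider the \emph{vertical relation} $P$ on $Z$, whose classes are exactly the fibres $p^{-1}(x)=\{(x,t):t\in T_x\}$, where $T_x$ denotes the set of tree nodes $t$ with $x\in V_t$. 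Given a Borel probability measure $\mu$ on $X$ (the one for which $\mu$-treeability is sought), fix a Borel probability measure $\nu$ on $Z$ with $p_*\nu=\mu$; since $Q$ is measure treeable it is in particular $\nu$-treeable.

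Next I would record two structural facts. First, $P$ is Borel treeable: by the coherence axiom (3) of tree decompositions, for each vertex $x$ the set $T_x$ induces a \emph{subtree} of the ambient $T_i$, so the Borel graph on $Z$ joining $(x,t)$ to $(x,t')$ whenever $t,t'$ are adjacent in $T_i$ (an adjacency, like membership $x\in V_t\cap V_{t'}$, readable off the separation system) is a forest whose components are exactly the $P$-classes. Second, $Q\vee P=p^{-1}(\operatorname{Rel}(\mathcal G))$: the inclusion $\supseteq$ uses axioms (1)--(2), since the bags cover $V$ and each edge of $\mathcal G$ lies inside a single bag, so any path in $\mathcal G$ is traced by alternating vertical moves and moves inside a common level; and $\subseteq$ holds because $Q$- and $P$-moves never leave the union of bags of a fixed component. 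Consequently $p$ induces a bijection between $(Q\vee P)$-classes and $\operatorname{Rel}(\mathcal G)$-classes, is countable-to-one, and $\operatorname{Rel}(\mathcal G)\cong(Q\vee P)/P$.

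The core of the argument is then to produce a $\nu$-measurable treeing of $E_Z:=Q\vee P$ and push it down to $X$. Starting from a $\nu$-measurable treeing $\mathcal A$ of $Q$ — which restricts to a spanning tree $\mathcal A_t$ of each level $L_t$ — the levels are glued to one another only through the \emph{finite} adhesion sets: $L_t$ is attached to an adjacent $L_{t'}$ via the at most $m$ vertical edges $\{(x,t)\!-\!(x,t'):x\in V_t\cap V_{t'}\}$, and this gluing pattern is organised by the tree $T_i$. This is exactly a Bass–Serre-type amalgamation of the treeable ``vertex relations'' $Q|_{L_t}$ over finite ``edge relations'', under which measure-treeability is stable; concretely one selects, via a measurable tie-break on $X$, a single bridge per tree edge, and prunes each $\mathcal A_t$ by the standard tree-decomposition surgery that deletes the edges made redundant by the adhesion sets already connected to the rest of the component, obtaining a $\nu$-measurable treeing $\mathcal H$ of $E_Z$. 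Finally I would transfer: when the tree decompositions have the extra property (which, as the remark after Theorem~\ref{thm:planar_torsos} notes, can be arranged in the relevant application) that every vertex lies in finitely many bags, $P$ has finite classes, hence is smooth, and choosing a Borel transversal $X_0\subseteq Z$ of $P$ gives $E_Z|_{X_0}\cong\operatorname{Rel}(\mathcal G)$; since measure-treeability passes to Borel restrictions, $\operatorname{Rel}(\mathcal G)$ is $\mu$-treeable. In full generality $P$ need not be smooth (the centre of an infinite star lies in infinitely many bags), and then the descent instead invokes the stability of measure-treeability under countable-to-one, class-surjective, measure-preserving Borel quotient maps such as $p$.

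The main obstacle, and the reason the whole argument is routed through $Z$ and the separation-system encoding rather than through the trees $T_i$, is that the tree nodes do not assemble into a standard Borel space and an infinite tree admits no Borel-canonical root or end; so the classical recipe ``assemble spanning trees of the bags along $T$'' must be re-expressed purely in terms of the Borel data $(Z,Q,P)$ and the finite adhesion sets, and one must verify \emph{simultaneously} that the amalgamated graph $\mathcal H$ is acyclic and still generates all of $E_Z$ — this is exactly where bounded adhesion is indispensable. The secondary delicate point is the descent from $E_Z$ on $Z$ to $\operatorname{Rel}(\mathcal G)$ on $X$ through the non-injective projection $p$, which is immediate when $P$ is smooth and otherwise relies on the measured-treeability toolkit for countable-to-one factor maps.
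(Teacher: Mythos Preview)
The paper does not contain its own proof of this statement: it is quoted verbatim as Theorem~3.4 combined with Proposition~2.6 of \cite{jardon2023applications} and used as a black box in the proof of Theorem~\ref{thm:main-transitive}. So there is no ``paper's own proof'' to compare your proposal against.

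As for the proposal itself, the architecture you outline --- lifting to the space $Z$, introducing the vertical relation $P$ whose classes are the fibres $p^{-1}(x)$, observing that $P$ is Borel treeable because each $T_x$ is a subtree, identifying $Q\vee P$ with $p^{-1}(\operatorname{Rel}(\mathcal G))$, and then amalgamating a treeing of $Q$ with the vertical tree structure along the finite adhesion sets --- is a reasonable sketch of how such a result is proved, and is in the spirit of the original argument in \cite{jardon2023applications}. Two places remain genuinely soft, however. First, the ``Bass--Serre-type amalgamation'' step is doing real work: you need to produce, \emph{measurably and simultaneously} across all classes, a single bridge per tree edge and verify that the surgery on the level-treeings $\mathcal A_t$ yields a forest generating all of $E_Z$; merely invoking stability of treeability under amalgams over finite subrelations does not by itself supply this, and the trees $T_i$ need not be locally finite even when $\mathcal G$ is. Second, in the non-smooth case your appeal to ``stability of measure-treeability under countable-to-one, class-surjective, measure-preserving Borel quotient maps'' is itself a nontrivial theorem that would need to be cited precisely or proved. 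Both gaps are fillable, but as written the proposal is a plan rather than a proof.
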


\noindent\textit{Remark.} Once again, in~\cite{jardon2023applications}, this result is stated not for tree decompositions, but for of separation systems satisfying a property called $(*)$. However, Proposition 2.6 in the same paper tells us that the result holds for any separation system induced by a Borel collection of tree decompositions with bounded adhesion.

\subsection{Transitive minor-excluded graphs}

We are ready to prove Theorem~\ref{thm:main-transitive}.

\begin{proof}[Proof of Theorem~\ref{thm:main-transitive}]
    We begin with a unimodular random rooted graph $(G,\rho)$ as in the statement of the theorem. Since the law of any unimodular random graph can be written as a mixture of ergodic laws of unimodular random graphs, and because soficity and treeability are preserved under taking mixtures, we may and will assume that the law of $(G,\rho)$ is ergodic (see Section~\ref{subsec:unimodular-sofic}). For every quasi-transitive graph $G$, there exists a positive integer $k$ such that any two vertices $u$ and $v$ of $G$ belong to the same orbit under the action of $\operatorname{Aut}(G)$ if and only if $B_G(u,r)$ and $B_G(v,r)$ are isomorphic as rooted graphs; let $\operatorname{rad}(G)$ denote the smallest positive integer with this property. The function $\operatorname{rad}(G)$ is measurable with respect to the topology of $\mathcal G_*$ so, by ergodicity, we may assume that there exists a single integer $k$ such that a.s.\ $\operatorname{rad}(G)\leq k$---note that here we have used the fact that there are only countably many values that $\operatorname{rad}(G)$ can take. Also by ergodicity, and because quasi-transitive graphs have finite maximum degree, there exists a positive integer $D$ such that $G$ a.s.\ has maximum degree at most $D$.
    
    We will show that there is a marking of $(G,\rho)$ which encodes a tree decomposition $(T,\mathcal V)$ of $G$ as in the statement of Theorem~\ref{thm:planar_torsos}. As was already discussed in Section~\ref{subsec:transitive_definitions}, this marking will store at every vertex $v$ the separations induced by $(T,\mathcal V)$ whose separator contains $v$. For every positive integer $r$, let $\mathcal G_*^{r,D}\subset\mathcal G_*$ be the set of (isomorphism classes of) finite rooted graphs with maximum degree at most $D$ and with all vertices at distance at most $r$ from the root; note that $\mathcal G_*^{r,D}$ is finite. For every positive integer $r$, consider the family $\mathcal F_{r}$ of all functions with domain $\mathcal G_*^{r,D}$ which, for every $(H,o)\in\mathcal G_*^{r,D}$, output a collection $\{(S_i,B_i)\}_{i\in \mathcal I}$ of distinct separations of $H$ such that $v\in S_i$ for all $i\in \mathcal I$. Since $\mathcal G_*^{r,D}$ is finite and all of its elements are finite graphs, the family $\mathcal F_r$ has finitely many elements too. We can think of each element of $\mathcal F_r$ as a local algorithm that can be used to assign a collection of separations to each vertex of a graph.
    
    Let $G$ be a graph drawn from our ergodic, unimodular measure. Then, a.s.\ $G$ is a quasi-transitive graph of maximum degree at most $D$, which does not have $K_\infty$ as a minor, and satisfies $r(G)\leq k$. Consider a tree decomposition $(T,\mathcal V)$ of $G$ which satisfies the properties in the statement of Theorem~\ref{thm:planar_torsos} for some parameter $m$; any such decomposition will be called $m$-\textit{pristine}. This tree decomposition can be encoded by recording, for each vertex $v$ of $G$, the collection $S_v$ of separations induced by $(T,\mathcal V)$ that have $v$ in their separator. Furthermore, as we discussed near the end of Section~\ref{subsec:transitive_definitions}, if $u$ and $v$ are vertices in the same orbit of $G$ and $\gamma\in \operatorname{Aut}(G)$ maps $u$ to $v$, then $S_v$ can be obtained by letting $\gamma$ act on $S_u$. As every vertex belongs to finitely many bags, each $S_v$ consists of finitely many separations. Now, since $(T,\mathcal V)$ has finite adhesion, there must exist a positive integer $d$ such that any two vertices in the separator of a separation induced by $(T,\mathcal V)$ are at distance at most $d$. Together with the fact we can discover which orbit a vertex of $G$ lies in by observing a sufficiently large ball around it, we conclude that there exists some positive integer $R$ and some element $\mathfrak M_G$ of $\mathcal F_R$ such that, for every vertex $v$ of $G$, $S_v$ can be obtained by applying $\mathfrak M_G$ to $B_G(v,R)$. 
    
    For every $\mathfrak M\in\bigcup_{n=1}^{\infty}\mathcal F_n$ and every positive integer $m$, the event that $\mathfrak M$ can be used at every vertex of $G$ to produce a collection of separations inducing an $m$-pristine tree decomposition is measurable with respect to the local topology on $\mathcal G_*$. Indeed, if $\mathfrak M$ fails to produce such a collection of separations, then this must become evident after observing a sufficiently large but finite ball around the root. Since the set $\mathbb N\times\bigcup_{n=1}^{\infty}\mathcal F_n$ is countable, the ergodicity of $(G,\rho)$ tells us that there exist a single positive integer $m$ and a single element $\mathfrak M\in\bigcup_{n=1}^\infty\mathcal F_n$ such that---a.s.\ over the randomness of $(G,\rho)$---$\mathfrak M$ produces a collection of separations inducing an $m$-pristine tree decomposition of $G$. We can think of the local algorithm $\mathfrak M$ as a marking.

    Now that we know that such a marking exists, we switch to the setting of Borel graphs so as to be able to apply the machinery from~\cite{jardon2023applications}. Let us denote by $\mathcal G$ and $(X,\lambda)$ a graphing corresponding to $(G,\rho)$ and the probability space on which it is defined, respectively (see Section~\ref{subsec:graphings}). By (possibly) passing to a subset of $X$ which has measure $1$ with respect to $\lambda$, may assume that every connected component of $\mathcal G$ is a quasi-transitive $K_\infty$-minor-free graph on which $\mathfrak M$ encodes an $m$-pristine tree decomposition. The marking described above translates into a Borel collection $\{(T_i,\mathcal V_i)\}_{i\in \mathcal I}$ of tree decompositions of the connected components of $\mathcal G$, and each decomposition in this collection is $m$-pristine.

    With $\{(T_i,\mathcal V_i)\}_{i\in \mathcal I}$ in hand, we can now attempt to use Theorem~\ref{tmh:jardon}. Towards this goal, we must understand the levels equivalence relation $Q$ that is derived from $\{(T_i,\mathcal V_i)\}_{i\in \mathcal I}$. As before, we denote by $Z$ the standard Borel space of pairs $(x,t)$ with $x\in X$, $t\in \bigcup_{i\in \mathcal I}V(T_i)$, and $x\in V_t$. Also, we let $\mathcal G_Q$ be the Borel graph on $Z$ where two vertices $(x,t)$ and $(y,t')$ form an edge if and only if $t=t'$ and $x$ and $y$ are adjacent in the torso corresponding to $t$. Since each tree decomposition $(T_i,\mathcal V_i)$ has connected torsos, $Q=\operatorname{Rel}(\mathcal G_Q)$. Furthermore, the torsos of each $(T_i,\mathcal V_i)$ are all quasi-transitive, and each of them is either planar or has treewidth at most $m$. It follows that $\mathcal G_Q$ can be written as a disjoint union of two Borel graphs: one whose connected components have treewidth at most $m$, and one whose connected components are quasi-transitive and planar. By the results in~\cite{jardon2023applications, chen2023tree} (see Theorem 1.4 in~\cite{chen2023tree}, as well as Theorem~\ref{thm:treewidth-treeable}), the first of these Borel graphs induces a Borel treeable equivalence relation. By Theorem~\ref{thm:accessible_minor-free}, the connected components in the second of these Borel graphs are not only planar but also accessible. Thus, Theorem~\ref{thm:planar_accessible_treeable} tells us that the second Borel graph induces a measure treeable equivalence relation. It follows that $Z=\operatorname{Rel}(\mathcal G_Q)$ is itself measure treeable and, finally, we can use Theorem~\ref{tmh:jardon} to deduce that $\operatorname{Rel(G)}$ is measure treeable. This concludes the proof, as it yields that $(G,\rho)$ is treeable and sofic.
\end{proof}

\bibliographystyle{plain}
\bibliography{refs}

\newpage
\section*{Appendix}\label{sec:Appendix}
\subsection*{Minors within crossed grids}

\begin{proof}[Proof of Proposition~\ref{prop:crossed-grid_minors}] 
    Take some drawing of $K_n$ on the plane so that no three edges cross at a single point, no two edges cross more than once, and no two edges sharing an endpoint cross. By adding a \textit{dummy} vertex at each crossing between two edges, we can transform this drawing into an embedded planar graph $K_n'$ which has at most $n+\binom{n}{2}<n^2$ vertices. Let $V_1$ and $V_2$ denote the sets of non-dummy and dummy vertices of $K_n'$, respectively, and write $V:=V_1\cup V_2$. It is well known that, for any sufficiently large integer $g$, the $g\times g$-grid $G_{g\times g}$ will contain $K_n'$ as a minor~\cite{robertson1986graph}. Consider any such $g$ and let $\{U_v\}_{v\in V(K_n')}$ be a family of pairwise disjoint sets of vertices of $G_{g\times g}$ which act as witnesses to the fact that $K_n'\preceq G_{g\times g}$ (see the definition of graph minors in Section~\ref{subsec:structural}). An example of this for $n=5$ is shown in Figure~\ref{fig:dummy_vertex}.
\begin{figure}[ht]
    \centering
    \includegraphics[width=.6\linewidth]{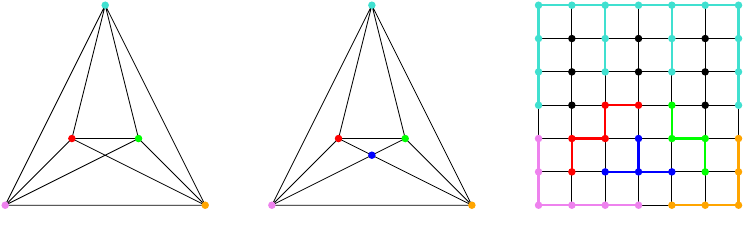}
    \caption{The picture on the left shows a drawing of the complete graph $K_5$ on the plane. Note that this drawing contains a pair of crossing edges. In the middle, we can see that the crossing point has been substituted by a dummy vertex (blue) to obtain the graph $K_5'$. On the left, we have highlighted some some sets of vertices on the grid $G_{7\times 7}$ (or, more accurately, the subgraphs they induce). These sets act as witnesses to the fact that $K_5'\preceq G_{7\times 7}$.}
    \label{fig:dummy_vertex}
\end{figure}

    Now, we consider a finer $(3g-2)\times (3g-2)$-grid, $G'$, which is obtained by subdividing each cell of $G_{g\times g}$ into a $4\times 4$-grid. By subdividing all the edges in each of the graphs $\{G_{g\times g}[U_v]\}_{v\in V(K_n')}$ into paths of length $3$, we obtain new sets of vertices $\{U_v'\}_{v\in V(K_n')}$. Next, for each $v\in V(K_n')$, we define $U''_v$ as the set of vertices of $G'$ which are at distance at most $1$ from at least one vertex in $U_v'$. Note that the sets $\{U_v''\}_{v\in V(K_n')}$ are pairwise disjoint, and that they act as witnesses to the fact that $K_n'\preceq G'$. An example of this is shown in Figure~\ref{fig:subdividing_grid}.
\begin{figure}[ht]
    \centering
    \includegraphics[width=.6\linewidth]{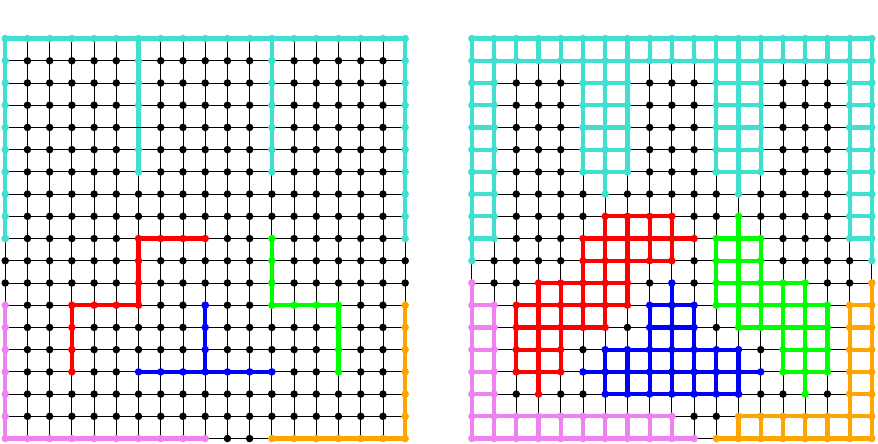}
    \caption{This is a continuation of the example in the previous figure. On the left, we see the result of subdividing the grid $G_{7\times 7}$---along with the highlighted sets---to obtain a $19\times 19$-grid $G'$. On the right, we can observe the result of adding to each highlighted set all those vertices which lie at distance at most $1$. The new highlighted sets act as witnesses to the fact that $K_5'\preceq G'$.}
    \label{fig:subdividing_grid}
\end{figure}
    
    Let $G''$ denote the $(3g-2)\times (3g-2)$-crossed-grid that arises from $G'$ by adding the two diagonal edges within each cell. We will prove that $K_n\preceq G''$. Each dummy vertex $u$ of $K_n'$ can be traced back to a crossing between two edges $(v_1,v_2)$ and $(v_3,v_4)$ of $K_n$. Inside the graph $G''[U''_u]$, we can find four distinct vertices $x_1,\dots,x_4$ which are adjacent to $U''_{v_1},\dots,U''_{v_4}$ in $G''$, in that order. By (possibly) using a pair of crossing diagonal edges, we can find two vertex-disjoint paths $P_1$ and $P_2$ within $G''[U_u'']$ which connect $x_1$ to $x_2$ and $x_3$ to $x_4$, respectively. Next, we forget about the set $U_u''$, and instead add the vertices in $P_1$ to $U_{v_1}''$, and the vertices of $P_2$ to $U_{v_3}''$. Note that this operation preserves the disjointness of the vertex sets, and makes it so that there is an edge in $G''$ joining $U_{v_1}''$ to $U_{v_2}''$, as well as an edge joining $U_{v_3}''$ to $U_{v_4}''$. After applying this operation for every dummy vertex $u$, we will have produced a family of sets of vertices of $G''$ which act as witnesses to the fact that $K_n\preceq G''$, as promised. See Figure~\ref{fig:fixing_dummies}.
\begin{figure}[ht]
    \centering
    \includegraphics[width=.6\linewidth]{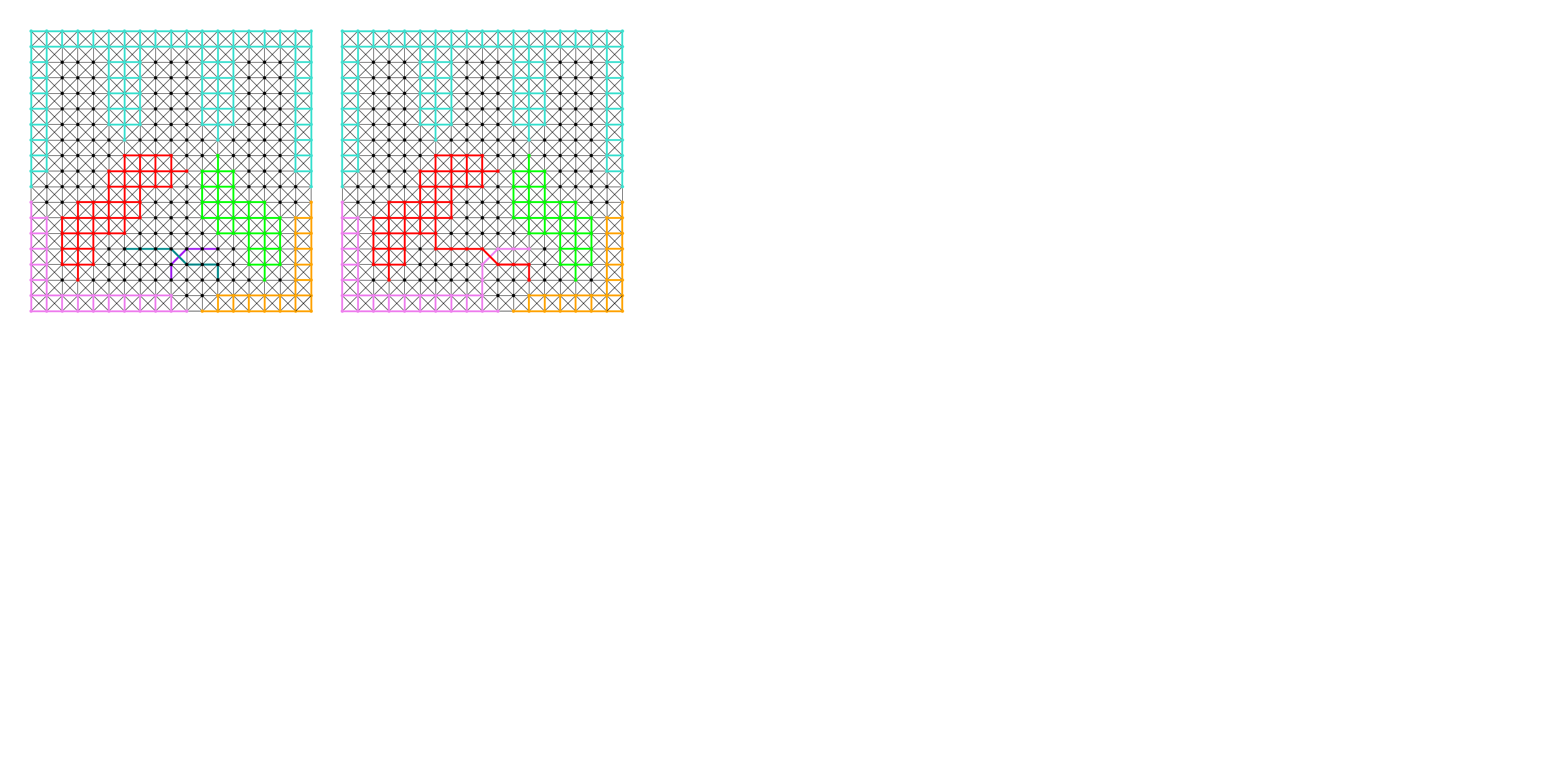}
    \caption{On the left, we see the graph $G''$. The set of vertices that was originally representing the blue dummy vertex has been replaced by two paths, which connect the red and orange sets, and the violet and green sets. On the right, these two paths have been appended to the red and violet vertex sets, respectively. The new highlighted sets act as witnesses to the fact that $K_5\preceq G''$.}
    \label{fig:fixing_dummies}
\end{figure}
\end{proof}

\subsection*{Szemeredi's Theorem}\label{subsec:Szemeredi-Furstenberg}

Here, we provide some background regarding the multi-dimensional version of Szemeredi's theorem due to Furstenberg and Katznelson. The original, one-dimensional version, first proven by Szemeredi~\cite{szemeredi1975sets}, states that for every constant $\delta\in(0,1]$ and every positive integer $k$, there exists an integer $N=N(\delta,k)$ so that every set $A\subseteq [N]$ of size at least $\delta n$ contains an arithmetic progression of length $k$. Here, by an \textit{arithmetic progression of length }$k$ we mean a set of $k$ integers which can be written as $\{a,a+d,\dots,a+(k-1)\}$. Of course, the conclusion still holds if we substitute $N$ by any other larger integer. This result is one of the cornerstones of the area which is nowadays often referred to as \textit{additive combinatorics}. A few years after the publication of Szemeredi's result, Furstenberg and Katznelson~\cite{furstenberg1978ergodic} generalized it as follows.

\begin{theorem}\label{thm:generalized-szemeredi}
    Fix a positive integer $d$. Then, for every positive integer $k$, every configuration $F=\{v_1,v_2,\dots,v_k\}$ consisting of $k$ distinct elements in $\mathbb Z^d$, and every $\delta\in(0,1]$, there exists an integer $N=N(k,F,\delta)$ such that the following holds:

    For every set $A\subseteq [N]^d$ with $|A|\geq \delta N^d$, we can find a point $u\in [N]^d$ and a positive integer $q$ such that the points $u+qv_1,u+qv_2,\dots,u+qv_k$ all belong to $A$.
\end{theorem}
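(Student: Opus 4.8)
The plan is to observe that the statement is precisely the multidimensional Szemer\'edi theorem of Furstenberg and Katznelson~\cite{furstenberg1978ergodic}, and to indicate how the finitary form above on $[N]^d$ is extracted from their work. The first step is to pass to the infinitary formulation: \emph{every} set $S\subseteq\mathbb Z^d$ of positive upper Banach density contains a homothetic copy $a+qF=\{a+qv_1,\dots,a+qv_k\}$ of $F$ for some $a\in\mathbb Z^d$ and some integer $q\geq 1$. The deduction of the finitary version from this is a routine compactness argument: if the finitary statement failed for some fixed $\delta\in(0,1]$ and some configuration $F$, then for every $N$ one would have a set $A_N\subseteq[N]^d$ with $|A_N|\geq\delta N^d$ containing no homothetic copy of $F$; averaging over translates of growing sub-boxes and diagonalizing produces a single subset of $\mathbb Z^d$ of upper Banach density at least $\delta$ avoiding all homothetic copies of $F$, contradicting the infinitary statement. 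Some bookkeeping is needed to handle boundary effects (one must ensure that the dilated copies being excluded genuinely fit inside the boxes under consideration), but this is elementary.

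For the infinitary statement I would invoke the Furstenberg correspondence principle, which turns the density problem into a multiple-recurrence problem: it suffices to show that for any probability measure-preserving system $(X,\mathcal B,\mu,T_1,\dots,T_d)$ with commuting transformations, any $E\in\mathcal B$ with $\mu(E)>0$, and the translations $T^{v}:=T_1^{v(1)}\cdots T_d^{v(d)}$, one has
\[
\liminf_{Q\to\infty}\frac1Q\sum_{q=1}^{Q}\mu\Bigl(\bigcap_{i=1}^{k}T^{-qv_i}E\Bigr)>0\,,
\]
so that in particular $\mu(\bigcap_{i}T^{-qv_i}E)>0$ for some $q\geq 1$. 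This is exactly the Furstenberg--Katznelson multiple recurrence theorem, proved by induction on the structure of the system via a relative Furstenberg structure theorem (decomposing into weak-mixing and compact extensions). I would simply cite this.

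The genuinely hard ingredient --- the one I would not attempt to reprove --- is this ergodic-theoretic multiple recurrence theorem; equivalently, on the combinatorial side, the density Hales--Jewett theorem, whose purely combinatorial proof due to the Polymath project yields explicit (if enormous) bounds on $N(k,F,\delta)$. Everything else is soft: the correspondence principle is a standard weak-$*$ compactness construction on the space of shift-invariant measures, and the finitary reduction is elementary. Since the theorem is used only as a black box in this paper, it suffices to record the statement and attribute it to Furstenberg and Katznelson.
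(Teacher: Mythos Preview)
Your proposal is correct and matches the paper's treatment: the paper simply states this as the Furstenberg--Katznelson multidimensional Szemer\'edi theorem and cites \cite{furstenberg1978ergodic} without proof, using it purely as a black box. You go further than the paper by sketching the standard finitary-to-infinitary reduction and the correspondence principle, which is fine but more than the paper itself offers.
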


In some sense, one can think of the points $u+qv_1,u+qv_2,\dots,u+qv_k$ as forming a generalized arithmetic progression. We remark that the original proof of the above theorem does not yield any effective bounds on the size of $N$, but such bounds were eventually obtained through very different methods (see, for example,~\cite{gowers2007hypergraph}).

The way Theorem~\ref{thm:generalized-szemeredi} is applied during the proof of Theorem~\ref{thm:locally_treelike} is by setting $k=K^2$, $F=[K]^2$, $\delta=1/(4D)$, and then taking any $M$ with $M\geq N(k,F)$. If $A$ corresponds to the set of pairs $(i,j)$ such that $\overline{W_{i,j}}$ is $\mathcal L'$-connected, the above theorem guarantees the existence of a $K\times K$-grid-like structure within $A$.
\end{document}